\newtheorem{theorem}{Theorem}[section]
\newtheorem*{theorem*}{Theorem}
\newtheorem{corollary}[theorem]{Corollary}
\newtheorem{lemma}[theorem]{Lemma}
\newtheorem*{lemma*}{Lemma}
\newtheorem{definition}[theorem]{Definition}
\newtheorem{question}[theorem]{Question}
\newtheorem{observation}[theorem]{Observation}
\newcommand{\R}{\mathbb{R}}
\newcommand{\Z}{\mathbb{Z}}
\newcommand{\F}{\mathbb{F}}
\DeclareMathOperator{\E}{\mathbb{E}}
\newcommand{\ceil}[1]{\lceil {#1} \rceil}
\newcommand{\eps}{\varepsilon}
\newcommand{\Rplus}{\mathbb{R}_{\ge 0}}
\newcommand{\rk}{\mathrm{rk}}
\begin{document}

\begin{frontmatter}[classification=text]

\title{Strong Bounds for Skew-Corner-Free Sets} 

\author[michael]{Michael Jaber\thanks{Supported by NSF Grant CCF-2312573 and a Simons Investigator Award (\#409864, David Zuckerman).}}
\author[shachar]{Shachar Lovett\thanks{Supported by NSF DMS award 1953928, NSF CCF award 2006443, and a Simons investigator award. The research was done while on sabbatical at the Weizmann Institute of Science, partially funded by the European Research Council (ERC) under the European Union’s Horizon 2020 research and innovation programme, grant agreement No. 949083.}}
\author[anthony]{Anthony Ostuni\thanks{Supported by NSF DMS award 1953928.}}

\begin{abstract}
Motivated by applications to matrix multiplication algorithms, Pratt asked (ITCS'24) how large a subset of $[n] \times [n]$ could be without containing a \textit{skew corner}: three points $(x,y), (x,y+h),(x+h,y')$ with $h \ne 0$. We prove any skew-corner-free set has size at most $\exp(-\Omega(\log^{1/12} n))\cdot n^2$, nearly matching the best known lower bound of $\exp(-O(\sqrt{\log n}))\cdot n^2$ by Beker (arXiv'24). Our techniques generalize those of Kelley and Meka's recent breakthrough on three-term arithmetic progression (FOCS'23), answering a question of Beker (arXiv'24). We note that a similar bound was obtained concurrently and independently by Mili{\'c}evi{\'c} (arXiv'24).
\end{abstract}
\end{frontmatter}

\section{Introduction}
In an attempt to rule out certain groups as candidates for achieving an optimal matrix multiplication algorithm using the framework of \cite{cohn2003group, cohn2005group}, Pratt \cite{pratt2024generalized} introduced \textit{skew corners}: triples of points $(x,y), (x,y+h)$, and $(x+h,y')$ for arbitrary $x,y,y',$ and $h$. Note that the more traditional form of corners can be recovered by insisting $y=y'$. We call a skew corner trivial if $h=0$ and nontrivial otherwise, and we call a set skew-corner free if it does not contain any nontrivial skew corners. Pratt's matrix multiplication results are conditional, holding if the largest skew-corner-free subset of $[n] \times [n]$ has size $O(n^{1+\eps})$ for all $\eps>0$, as well as weaker results for the upper bound $O(n^{4/3-\eps})$ for any fixed $\eps > 0$ \cite{pratt2024generalized}.

Very recently, these two conjectures were refuted. Pohoata and Zakharov \cite{pohoata2024skew} constructed a skew-corner-free set of size $\Omega(n^{5/4})$, which was improved shortly after to $\exp(-O(\sqrt{\log n}))\cdot n^2$ by Beker \cite{beker2024improved} using a Behrend-type construction (see \cite{behrend1946sets}). In the same paper, Beker also improved the best known upper bounds to $O((\log n)^{-c})\cdot n^2$ for an absolute constant $c > 0$, and went on to ask if one could use the techniques of the recent breakthrough on three-term arithmetic progressions (3APs) \cite{kelley2023strong} (see also \cite{bloom2023kelley, bloom2023improvement}) to obtain a bound of $\exp(-\Omega(\log^c n))\cdot n^2$ for some constant $c > 0$. We resolve this question affirmatively.
\begin{restatable}{theorem}{mainint}\label{thm:corners_bounds_int}
    Any skew-corner-free subset of $[n] \times [n]$ has size at most $\exp\left\{-\Omega\left(\log^{1/12}n\right)\right\}\cdot n^2$.
\end{restatable}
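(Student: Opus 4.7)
The plan is to adapt the Kelley--Meka density increment framework from three-term arithmetic progressions in $\mathbb Z_N$ to skew corners in $\mathbb Z_N \times \mathbb Z_N$, as explicitly suggested by Beker. After passing from $[n]^2$ to $G^2$ with $G = \mathbb Z_N$ and $N$ a prime slightly larger than $n$ (losing only a constant factor in density), I would set up the counting functional
\[
T(f_1, f_2, f_3) := \sum_{x, y, h, y' \in G} f_1(x,y)\, f_2(x, y+h)\, f_3(x+h, y'),
\]
and note that any skew corner-free $A \subseteq G^2$ satisfies $T(1_A, 1_A, 1_A) = \sum_x |A_x|^2$, since only $h = 0$ contributes. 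Writing $1_A = \alpha + g$ with $g$ mean zero and expanding, the three single-$g$ terms vanish by mean zero, two of the three $\alpha g g$ terms factor through $\sum g = 0$, and what survives is the identity $T(1_A, 1_A, 1_A) = \alpha^3 N^4 + \alpha N\, V + E$, where $V = \sum_x (|A_x| - \alpha N)^2$ is the row-sum variance and $E = T(g, g, g)$. Combining with $\sum_x |A_x|^2 = \alpha^2 N^3 + V$ and rearranging yields $(\alpha N - 1)(\alpha^2 N^3 + V) = -E$, so any skew corner-free $A$ with $\alpha N \gg 1$ is forced to have $|E| \geq (1 - o(1))\alpha^3 N^4$.

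Bounding $|E|$ from above is the core analytic task. For each row $x$, with $P_x(h) = \sum_y g(x,y) g(x, y+h)$ and $G(z) = |A_z| - \alpha N$, the partial sum $E_x = \sum_h P_x(h) G(x+h)$ satisfies $|E_x|^2 \leq V \cdot \tfrac{1}{N}\sum_\eta |\hat g_x(\eta)|^4$ by Cauchy--Schwarz in $h$ and Parseval (using $\widehat{P_x} = |\hat g_x|^2$, where $\hat g_x$ denotes the Fourier transform in the $y$-variable of row $x$). Summing over $x$ and applying Cauchy--Schwarz again gives $|E|^2 \leq V \cdot M$, where $M = \sum_{x,\eta} |\hat g_x(\eta)|^4$ is the aggregate row-wise $L^4$ Fourier mass. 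Thus either $V$ is large---in which case a density increment follows by passing to an interval of columns where $|A_x|/N$ exceeds $\alpha$---or $M$ is large, meaning the rows of $A$ collectively exhibit substantial Fourier bias.

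The crux is converting large $M$ into a density increment on a genuinely two-dimensional structured set. Following Kelley--Meka, I would apply their spectral-positivity/dependent-random-choice technique, Croot--Sisask almost-periodicity, and Chang's theorem to extract a frequency $\eta$ for which the set $S_\eta = \{x : |\hat g_x(\eta)|$ is large$\}$ has substantial measure, then upgrade to a Bohr set $B \subseteq G$ of dimension $\poly(\log(1/\alpha))$ and width $\exp(-\poly(\log(1/\alpha)))$ on which $A$ restricted to a product Bohr set $B_1 \times B_2$ has density at least $(1 + \Omega(1))\alpha$. Iterating this increment, with dimension and width tracking matching the Kelley--Meka accounting, produces a contradiction unless $\alpha \leq \exp(-\Omega(\log^{1/12} n))$.

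The \emph{main obstacle} is the inherent coupling between rows: density increments must be coherent in both coordinates, yet the natural Fourier analysis is one-dimensional (in the $y$-variable) per row. Upgrading row-wise Fourier bias into a two-dimensional Bohr-set density increment---without losing too much in either the iteration count or the Bohr-set widths, and while handling the simultaneous contribution of $V$ and $M$ in the dichotomy---is the delicate technical heart of the argument, and where adapting the Kelley--Meka spectral boosting and almost-periodicity machinery to product groups will require the most care.
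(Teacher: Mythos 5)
Your opening reduction is fine and runs parallel to the paper's counting lemma: the identity $T(1_A,1_A,1_A)=\alpha^3N^4+\alpha N V+E$ with $E=T(g,g,g)$ checks out, and it correctly forces $|E|\gtrsim \alpha^3N^4$ for a skew corner-free set. The gap is in what you do with $E$. By applying Cauchy--Schwarz in $h$ and Parseval you funnel everything into the fourth Fourier moment $M=\sum_{x,\eta}|\hat g_x(\eta)|^4$, and then propose to convert ``$M$ large'' into an increment via Kelley--Meka machinery plus Chang's theorem. But the Kelley--Meka method does not take a large $L^4$ Fourier mass (additive energy) as its input; its entire point is to \emph{avoid} the energy route, whose known conversions to density increments (Chang/Sanders-type) are exactly what cap the older arguments at power-of-$\log$ savings --- i.e.\ at Beker's already-known $O((\log n)^{-c})\cdot n^2$, not $\exp(-\Omega(\log^{c}n))\cdot n^2$. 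The paper instead gets non-uniformity in an $L^p$ norm with $p=O(d)$, $d=\log(1/\alpha)$, directly from the corner count by H\"older against the normalized column-size measure $\mu_D$ (using only $\|\mu_D\|_\infty\le 2^d$), and then runs unbalancing (spectral positivity of $F'=\sum_g 1_{A_g}\circ 1_{A_g}$), dependent random choice, and almost periodicity on that high-moment deviation. Relatedly, your $V$-large branch is unjustified: large variance of the column sums gives individual heavy columns, not ``an interval of columns'' --- the heavy columns can be completely unstructured, so no increment on a structured column set follows. In the paper this branch never arises, because the column-size distribution is absorbed into the normalization $\langle\mu_F,\mu_D\rangle$ rather than split off.

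The second, and more decisive, gap is the one you yourself flag as ``the delicate technical heart'' but leave unaddressed: each row's bias only yields an increment on its own structured set, and these are a priori uncorrelated, while the iteration needs a \emph{common} structure. The paper's solution is a specific mechanism, not just care: it defines $(r,\lambda)$-\emph{simultaneous} spreadness with progress measured by $\sum_g\bigl(|(A_g-x_g)\cap B|/|B|\bigr)^2$ over a common subspace/Bohr set $B$ with per-column shifts $x_g$ (shift-invariance being forced by the problem); it makes dependent random choice apply to the whole collection at once by embedding $\{A_g\}$ as a single set in $G\times G$ and working with the summed convolution $F'$; and, crucially, its increment lemma restricts \emph{both} coordinates --- the rows to $B$ and the column index set to a translate of a dilate of the same Bohr set --- precisely because restricting a single dense column to a subspace $V$ can leave no dense columns indexed by $V$ at all. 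Without an analogue of these steps, your outline reduces to ``apply Kelley--Meka with care'' at exactly the point where the skew-corner problem differs from 3APs, so as written the proposal does not reach the claimed $\exp(-\Omega(\log^{1/12}n))\cdot n^2$ bound.
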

In fact, we can prove such a result for any skew-corner-free subset of $G \times G$, where $G$ is an arbitrary finite Abelian group. It is likely that the constant $1/12$ can be somewhat improved using ideas similar to \cite{bloom2023improvement}. However, the overall technique's limitations appear to prevent improvements significant enough to approach Beker's lower bound \cite{beker2024improved}, so we instead prioritize a streamlined presentation. 

As is somewhat standard, we first exhibit our argument in the simpler finite field setting. Here, we can obtain slightly stronger bounds. 
\begin{restatable}{theorem}{mainff}\label{thm:corners_bounds_ff}
    Let $q$ be a prime power. Any skew-corner-free subset of $\F_q^n \times \F_q^n$ has size at most $q^{2n-\Omega(n^{1/9})}$.
\end{restatable}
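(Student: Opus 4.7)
The plan is to follow a density-increment scheme in the style of Kelley and Meka, applied to the trilinear form counting skew corners in $G := \F_q^n$. Let $A \subseteq G \times G$ have density $\alpha$ and write $g(x) := |\{y : (x,y) \in A\}|$. The count
\[ T(A) \;=\; \sum_{x, y, y', h \in G} A(x,y)\, A(x, y+h)\, A(x+h, y') \]
reduces to $\sum_x g(x)^2 \le q^n |A| = \alpha q^{3n}$ whenever $A$ is skew corner-free (only $h=0$ contributes). Writing $A = \alpha + f$ with $f$ of mean zero and expanding, all the degree-$1$ terms and two of the three degree-$2$ terms vanish by mean-zero cancellations, leaving
\[ T(A) \;=\; \alpha^3 q^{4n} \;+\; \alpha q^n\, \|g - \alpha q^n\|_2^2 \;+\; T_f, \qquad T_f \;:=\; \sum_{x,y,y',h} f(x,y)\, f(x,y+h)\, f(x+h, y'). \]
The middle term is nonnegative, so establishing $|T_f| \le \tfrac{1}{2}\alpha^3 q^{4n}$ forces $T(A) \ge \tfrac{1}{2}\alpha^3 q^{4n}$; comparing with the skew-corner-free bound $\alpha q^{3n}$ gives a contradiction whenever $\alpha \gg q^{-n/2}$, a regime far laxer than the target $\alpha \ge q^{-\Omega(n^{1/9})}$. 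The entire problem therefore reduces to bounding $|T_f|$ or finding a density increment.

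The key rewrite is
\[ T_f \;=\; \sum_{x, s} r_{f_x}(s)\, F(x + s), \qquad r_{f_x}(s) := \sum_y f(x, y)\, f(x, y + s),\qquad F(x) := \sum_y f(x, y), \]
after which Cauchy-Schwarz together with Parseval in $y$ bounds $|T_f|$ by a product of $\|F\|_2 = \|g - \alpha q^n\|_2$ and the row-averaged $\ell^4$-Fourier mass $\sum_{x,\xi} |\widehat{f(x,\cdot)}(\xi)|^4$. The dichotomy is then: either both are small enough to yield $|T_f| \le \tfrac{1}{2}\alpha^3 q^{4n}$ directly, or some 2D Fourier frequency of $f$ carries anomalous mass. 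In the latter case I plan to invoke the Kelley-Meka spectral-sifting lemma, which turns concentrated fourth-moment spectral mass into a Bohr-set-like region where the convolution $f \ast \widetilde f$ is anomalously large, followed by a Bloom-Sisask / Croot-Sisask almost-periodicity argument upgrading the convolution bound into a genuine density increment on an affine plane: a subspace $V \le G$ of codimension $\poly\log(1/\alpha)$ and shifts $(x_0, y_0) \in G \times G$ with $|A \cap ((x_0, y_0) + V \times V)|/|V|^2 \ge \alpha \cdot (1 + \Omega(\poly(\alpha)))$.

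Iterating, each step costs $\poly\log(1/\alpha)$ in codimension and multiplies density by $(1 + \poly(\alpha))$, so after $O(\log(1/\alpha)/\poly(\alpha))$ iterations the density exceeds $1$; requiring the accumulated codimension to fit inside $n$ and optimizing forces $\log(1/\alpha) \le O(n^{1/9})$. The main obstacle is executing the spectral-sifting and almost-periodicity step for a function on $G \times G$ rather than on a single $G$: the factor $f(x+h, y')$ in $T_f$ couples the second-coordinate Fourier structure of one row to the first coordinate of another, so any large row-Fourier coefficient identified by sifting must be converted into an increment on a genuinely two-dimensional affine subspace, not merely within a fixed row or column. Making this joint sifting quantitatively efficient, and in particular arranging that the codimension costs of the horizontal and vertical pieces combine additively rather than multiplicatively, is where the proof will concentrate its technical effort and where the exponent $1/9$ will be pinned down.
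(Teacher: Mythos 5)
Your opening expansion and the reduction to bounding the skew-corner count of a dense set are fine, but the core of your plan—converting the non-uniformity of $T_f$ into a density increment of the form $|A \cap ((x_0,y_0)+V\times V)|/|V|^2 \ge \alpha(1+\dots)$ with a \emph{single} shift $(x_0,y_0)$—is the wrong target, and this is exactly the obstruction the paper is organized around. The quantities that control skew corners, namely the within-column difference counts $r_{f_x}(s)=\sum_y f(x,y)f(x,y+s)$ and the column sizes $g(x)$, are invariant under shifting every column by an \emph{independent} amount (\Cref{obs:shift}); equivalently, the Kelley--Meka machinery applied to the spectrally non-negative object $\sum_x 1_{A_x}\circ 1_{A_x}$ forgets the absolute vertical position of each column. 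Hence no argument passing through these statistics can locate a common vertical shift $y_0$: if it could, it would also apply to an arbitrary per-column scrambling of $A$, which preserves the hypothesis but destroys any fixed product increment. The correct conclusion one can extract is a \emph{simultaneous} increment with a separate shift $x_g$ per column, measured by $\sum_g\bigl(|(A_g-x_g)\cap V|/|V|\bigr)^2$ (the paper's $(r,\lambda)$-simultaneous spreadness, \Cref{def:sim_spread_ff}, and \Cref{lem:inftospread_ff}). Moreover, for the increment to be iterable you must also restrict the \emph{column index set} to a coset $V+w$ (as in \Cref{lem:spreadness_ff}): restricting rows alone does not guarantee that the column-size function $D$ retains any density on $V$, a failure mode the paper discusses explicitly when explaining why working with a single dense column does not suffice. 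Your last paragraph flags this row/column coupling as the hard part, but the resolution you propose (a genuinely two-dimensional single-shift increment) is precisely what cannot be produced.

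There is also a quantitative mismatch that would prevent you from reaching $q^{2n-\Omega(n^{1/9})}$ even if the structural issue were repaired. You budget a density gain of $1+\Omega(\poly(\alpha))$ per step and base the dichotomy on $\ell^4$ Fourier mass (a fourth-moment/$U^2$-type criterion); this leads to $\poly(1/\alpha)$ iterations and $\poly(1/\alpha)$ total codimension, i.e., only a bound of the shape $\alpha \ge \poly(1/n)$, weaker than what is already known. The strong bound requires the full Kelley--Meka pipeline with a \emph{constant-factor} increment per step: high-exponent H\"older non-uniformity with $p=O(\log(1/\alpha))$ (\Cref{lem:nonuniform_ff}), unbalancing via spectral positivity of $\sum_x 1_{A_x}\circ 1_{A_x}$, dependent random choice producing witnesses $M_1,M_2$ of density $2^{-O(dp)}$, and almost periodicity giving codimension $\mathrm{polylog}(1/\alpha)$ per step. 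In the paper this yields codimension $O_{\eps}(d^8)$ per increment step with a fixed $\eps$, hence $O(d)$ steps and total codimension $O(d^9)\le n$, which is where the exponent $1/9$ actually comes from; your accounting does not reproduce this.
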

A version of \Cref{thm:corners_bounds_ff} with the superior bound $O(q^{(2-c)n})$ (for a constant $c > 0$ depending on $q$) is known via the polynomial method \cite{pohoata2024skew}. However, this technique does not appear to generalize to the integer case, whereas our analytic approach is able to do so relatively painlessly.

We largely view this work as a step toward better understanding the traditional corners problem of how large a subset of $[n] \times [n]$ one can have without three points $(x,y), (x,y+h), (x+h, y)$. This problem is much older and better studied than the skew variant, yet the best upper bound of $O(n^2 / (\log \log n)^{0.0137\cdots})$ \cite{shkredov2006generalization} remains far from the lower bound of $\exp(-O(\sqrt{\log n}))\cdot n^2$ \cite{behrend1946sets} (see also \cite{linial2021larger, green2021lower, hunter2022corner}).

There has been optimism (see, e.g., \cite{meka2023talk} and \cite[Section 1.2]{peluse2023finite}) that the techniques recently used to dramatically improve the best known upper bounds on 3AP-free sets \cite{kelley2023strong} could also be applied in the (traditional) corners setting. Our work provides evidence that this is a potentially fruitful direction. In fact, we are able to either black-box or closely follow the proofs of many lemmas from the excellent exposition of Bloom and Sisask \cite{bloom2023kelley} of Kelley and Meka's work, raising the following question about the relationship between these problems.
\begin{question}
    Suppose the largest 3AP-free subset of $[n]$ has size $\delta n$. Does the largest skew-corner-free subset of $[n]\times [n]$ have size $\text{poly}(\delta)\cdot n^2$? More generally, is it possible to directly convert bounds from one setting to another?
\end{question}

\paragraph*{Concurrent work.} A similar result to \Cref{thm:corners_bounds_int} was obtained concurrently and independently by Mili{\'c}evi{\'c} \cite{milicevic2024good}. His techniques are also based on the ideas of \cite{kelley2023strong}, although the specifics diverge slightly from our own. 

\section{Background}\label{sec:background}

Let $G$ be a finite Abelian group throughout (although we will only truly be concerned with vector spaces over a finite field $\F_q^n$ or the integers modulo a positive integer $\mathbb{Z}/N \mathbb{Z}$).

\paragraph{Asymptotics.}
We use standard asymptotic notation of $O(\cdot)$ and $\Omega(\cdot)$ to suppress fixed constants that do not depend on any parameters. Occasionally, we will use subscripts to hide a dependence on particular parameters (e.g., $O_{\eps}(\cdot)$).

\paragraph{Distributions.}
A distribution $\nu$ over $G$ is a non-negative function $\nu\colon G \to \Rplus$ with $\E[\nu]=1$. For $f\colon G \to \R_{\ge 0}$ define $\nu(f)=\E_{x \in G} \nu(x) f(x)$ to be the average of $f$ under $\nu$. We also define $\nu_f$ to be $f$ normalized to have expectation $1$ under $\nu$; that is, $\nu_f(x) = \nu(x) f(x) / \nu(f)$. We use $\mu$ to denote the uniform distribution on $G$, so $\mu \equiv 1$. Given a set $A \subseteq G$, we identify $A$ with its indicator function $1_A$ and define accordingly $\nu(A)=\nu(1_A)$ and $\nu_A = \nu_{1_A}$. In this case, $\mu_A = \frac{|G|}{|A|}\cdot 1_A$, and in particular $\E_{x\in G}\mu_A(x) = 1$. In other words, $\mu_A$ is the normalized indicator function of $A$.

\paragraph{Functions.}

For functions $f,g \colon G \to \mathbb{R}$, we define inner products and norms with the normalized counting measure on $G$, namely
$$
    \langle f, g \rangle = \mathop{\mathbb{E}}_{x \in G} f(x) g(x) \quad\text{and}\quad \|f\|_p = \left( \mathop{\mathbb{E}}_{x \in G} |f(x)|^p \right)^{1/p} \text{ for } 1 \leq p < \infty,
$$
as well as $\|f\|_{\infty} = \max_{x\in G} |f(x)|$. We will also want to work with other measures on $G$. For a measure $\nu$ on $G$, we write
$$
    \langle f, g \rangle_{\nu} = \mathop{\E}_{x \in G} \nu(x) f(x) g(x) \quad\text{and}\quad
    \|f\|_{p(\nu)} = \left( \mathop{\E}_{x \in G} \nu(x) |f(x)|^p \right)^{1/p} \text{    for    }1 \leq p < \infty.
$$
Additionally, we define the convolution and the difference convolution as 
$$
    (f \ast g)(x) = \mathop{\mathbb{E}}_{y \in G} f(y) g(x-y) \quad\text{and}\quad
    (f \circ g)(x) = \mathop{\mathbb{E}}_{y \in G} f(y) g(x+y),
$$
and the $p$-fold convolution as $f^{(p)} = f\ast f^{(p-1)}$, where $f^{(1)}=f$. Note the useful adjoint property $\langle f, g \ast h \rangle = \langle h \circ f, g \rangle.$ Occasionally, the notation $f^g(x)=f(x-g)$ for some $g\in G$ will be useful.

Let $\widehat{G} = \{\text{homomorphisms }\gamma : G \to \mathbb{C}^\times\}$ be the dual group of $G$. The Fourier transform of $f$ is $\widehat{f} \colon \widehat{G} \to \mathbb{C}$, where
\[
\widehat{f}(\gamma) = \mathop{\E}_{x\in G} f(x)\gamma(-x).
\]
If $\widehat{f}(\gamma) \ge 0$ for all $\gamma \in \widehat{G}$, we say $f$ is \textit{spectrally non-negative}, denoted $\widehat{f} \ge 0$. Note the basic facts that $\widehat{f \ast g} = \widehat{f}\cdot\widehat{g}$ and $\widehat{f \circ f} = |\widehat{f}|^2$. In particular, $f \circ f$ is spectrally non-negative. We define the convolution for two functions $f, g \colon \widehat{G} \to \mathbb{C}$ using the ordinary counting measure:
\[
    (f \ast g)(\gamma) = \sum_{\tau \in \widehat{G}} f(\tau)g(\gamma - \tau).
\]
The use of the ordinary counting measure here is so that we have the identity $\widehat{f^p} = \widehat{f}^{(p)}$.

\subsection{Skew corners}
Sequences of evenly spaced integers, known as \textit{arithmetic progressions}, are central objects of study in additive combinatorics. In an attempt to understand similar structure in higher dimensions, Ajtai and Szemer{\'e}di \cite{ajtai1974sets} considered the following objects.
\begin{definition}[Corners]
    A \emph{corner} in $G \times G$ is a triple of the form $(x,y), (x, y+h), (x+h, y)$ for arbitrary $x,y,h \in G$. A corner is \emph{nontrivial} if $h \ne 0$.
\end{definition}

By relaxing the structure to allow the third point to lie anywhere on the vertical line $x+h$, one obtains \textit{skew corners}, a generalization recently raised by Pratt \cite{pratt2024generalized}.
\begin{definition}[Skew corners]
    A \emph{skew corner} in $G \times G$ is a triple of the form $(x,y), (x, y+h), (x+h, y')$ for arbitrary $x,y,y',h \in G$. A skew corner is \emph{nontrivial} if $h \ne 0$. Additionally, a set is \emph{skew-corner-free} is it does not contain nontrivial skew corners.
\end{definition}

One immediate, yet useful, observation is that shifting a set will not change the number of skew corners it contains.
\begin{observation}\label{obs:shift}
    The number of skew corners in a set $A$ is invariant to both horizontal shifting ($A \rightarrow \{(x+a,y): (x,y) \in A\}$) and vertical shifting ($A \rightarrow \{(x,y+b(x)) : (x,y) \in A\}$).
\end{observation}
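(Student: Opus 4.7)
The plan is to prove each of the two invariance claims by exhibiting an explicit bijection between the skew corners of $A$ and the skew corners of the shifted set, in such a way that the parameter $h$ is preserved; this automatically preserves the distinction between trivial ($h=0$) and nontrivial ($h\ne 0$) corners.

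For horizontal shifts, let $A' = \{(x+a,y) : (x,y)\in A\}$. The map $\Phi_a\colon (x,y)\mapsto (x+a,y)$ is a bijection $A\to A'$, and I would check that it sends the ordered triple $((x,y),(x,y+h),(x+h,y'))$ to the triple $((x+a,y),(x+a,y+h),((x+h)+a,y'))$. The latter is again of the skew-corner shape with the same offset $h$, so $\Phi_a$ induces a bijection on skew corners. The inverse is $\Phi_{-a}$.

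For vertical shifts, let $A'' = \{(x,y+b(x)) : (x,y) \in A\}$ for an arbitrary function $b\colon G\to G$. The key structural remark is that in a skew corner, two of the three points share a common first coordinate $x$ (and differ by $h$ in the second), while the third has first coordinate $x+h$. Consequently the map $\Psi_b\colon(x,y)\mapsto(x,y+b(x))$, which depends only on the first coordinate, sends the triple
\[
((x,y),\,(x,y+h),\,(x+h,y')) \;\longmapsto\; ((x,y+b(x)),\,(x,y+h+b(x)),\,(x+h,y'+b(x+h))).
\]
The first two image points still share first coordinate $x$ and differ by exactly $h$ in the second, and the third has first coordinate $x+h$, so this is again a skew corner with the same $h$. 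The inverse bijection is $\Psi_{-b}$.

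There is essentially no obstacle here — the entire content is the observation that the defining constraints of a skew corner involve only the \emph{difference} of the second coordinates at a common first coordinate, so they are unaffected by any reparametrization of the second coordinate that is a function of the first, and the first-coordinate constraint $\{x,x,x+h\}$ is preserved under uniform translation. The only point to double-check is that the two maps $\Phi_a$ and $\Psi_b$ really are bijections on $G\times G$ (which is clear, as their inverses are of the same form), so that they send $A$ bijectively to the shifted set and hence induce bijections on the sets of skew corners.
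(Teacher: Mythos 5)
Your proof is correct, and it is exactly the argument the paper intends: the paper states this as an observation without proof, and the implicit justification is precisely your bijections $(x,y)\mapsto(x+a,y)$ and $(x,y)\mapsto(x,y+b(x))$, which map the parametrized triple $(x,y),(x,y+h),(x+h,y')$ to a triple of the same form with the same $h$ (so trivial and nontrivial corners are preserved separately). Nothing further is needed.
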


The following lemma allows us to count skew corners in an analytic way. 
\begin{lemma}\label{lem:corners_count}
    Let $A \subseteq G \times G$ and $A_x = \{y : (x,y) \in A\}$. Additionally, let $D \colon G \to \Rplus$ be given by $D(x) = |A_x|$. Then, the number of skew corners in $A$ is counted by 
    $$
        |G|^2 \sum_{x\in G} \langle 1_{A_x} \circ 1_{A_x+x}, D \rangle.
    $$
\end{lemma}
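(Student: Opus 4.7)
The statement is an exercise in unpacking definitions and rewriting a sum as a convolution-inner-product, so I would prove it by direct calculation rather than any clever maneuver.

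My starting point is the raw count. A skew corner in $A$ is specified by four parameters $(x,y,h,y')\in G^4$ subject to $(x,y),(x,y+h),(x+h,y')\in A$, and I count trivial and nontrivial corners together (as is standard). Using $A_x=\{y:(x,y)\in A\}$, the number of skew corners is
\[
\sum_{x,h,y,y'} 1_{A_x}(y)\,1_{A_x}(y+h)\,1_{A_{x+h}}(y').
\]
First I would perform the innermost sum over $y'$, which just counts the fiber above $x+h$: $\sum_{y'}1_{A_{x+h}}(y')=|A_{x+h}|=D(x+h)$. This gives the intermediate expression
\[
\sum_{x,h,y} 1_{A_x}(y)\,1_{A_x}(y+h)\,D(x+h).
\]

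Next I would match this against the target expression. Unpacking the inner product, for a fixed $x$,
\[
\langle 1_{A_x}\circ 1_{A_x+x},\,D\rangle
= \mathop{\E}_{t\in G}\,D(t)\cdot\mathop{\E}_{z\in G} 1_{A_x}(z)\,1_{A_x+x}(z+t)
= \frac{1}{|G|^2}\sum_{t,z} 1_{A_x}(z)\,1_{A_x}(z+t-x)\,D(t),
\]
using that $1_{A_x+x}(w)=1_{A_x}(w-x)$ since $A_x+x$ denotes the translate $\{y+x:y\in A_x\}$. Substituting $y=z$ and $h=t-x$ (a bijective change of variables on $G\times G$) converts this to
\[
\frac{1}{|G|^2}\sum_{h,y} 1_{A_x}(y)\,1_{A_x}(y+h)\,D(x+h).
\]
Multiplying by $|G|^2$ and summing over $x$ recovers exactly the intermediate expression above, establishing the identity.

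There is no real obstacle here: the only thing to watch is that the shift by $x$ inside $1_{A_x+x}$ is precisely what converts the naive difference convolution $1_{A_x}\circ 1_{A_x}$ (which would pair $D(h)$ with the count of pairs at separation $h$ in $A_x$) into one that pairs $D(x+h)$ instead, as the count of corners demands. Everything else is bookkeeping with the $\E$-normalized inner product. I would likely present it in the paper as two short displayed computations with one sentence of commentary in between.
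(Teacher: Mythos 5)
Your proof is correct and is essentially the same direct computation as the paper's: sum over $y'$ to produce $D(x+h)$, use the translate identity $1_{A_x+x}(w)=1_{A_x}(w-x)$, and change variables to match the normalized inner product. The only difference is organizational (you meet in the middle by also unpacking the target expression, while the paper rewrites the raw count in one chain), which is immaterial.
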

\begin{proof}
    Recall a skew corner in $A$ is a triple of the form 
    $$
        (x,y), (x,y+h), (x+h, y')
    $$
    for arbitrary $x,y,y',h\in G$. The number of skew corners is given by 
    \begin{align*}
        \sum_{x,y,y',h \in G} 1_A(x,y) 1_A(x,y+h) 1_A(x+h,y') &= \sum_{x,y,y',h} 1_{A_x}(y) 1_{A_x}(y+h) 1_{A_{x+h}}(y') \\
        &= \sum_{x,y,h} 1_{A_x}(y) 1_{A_x+x}(x+y+h) D(x+h) \\
        &= |G| \sum_{x,h}  (1_{A_x} \circ 1_{A_x+x}) (x+h) D(x+h) \\
        &= |G|^2 \sum_{x} \langle 1_{A_x} \circ 1_{A_x+x}, D \rangle. \tag*{\qedhere}
    \end{align*}
\end{proof}

Occasionally, the following normalized form will be more useful.
\begin{lemma}\label{lem:normalized_corners_count}
    Let $A \subseteq G \times G$, $A_x = \{y : (x,y) \in A\}$, and $\eta \in \Rplus$. Additionally, let $f = \sum_{x \in G} 1_{A_x} \circ 1_{A_x+x}$ and $D \colon G \to \Rplus$ be given by $D(x) = |A_x|$. If $\langle \mu_f, \mu_D\rangle \geq \eta$, then the number of skew corners in $A$ is at least 
    \[
        \eta\cdot\frac{|A|^3}{|G|^2}.
    \]
\end{lemma}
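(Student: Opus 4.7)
The plan is to reduce directly to \Cref{lem:corners_count} and then unpack the normalized inner product in terms of $|A|$ and $|G|$. By \Cref{lem:corners_count}, the skew corner count is exactly $|G|^2 \langle f, D\rangle$, so the goal reduces to showing $\langle f, D\rangle \geq \eta\cdot|A|^3/|G|^4$. Writing out the definition of $\mu_f$ and $\mu_D$ gives
$$\langle \mu_f, \mu_D\rangle = \frac{\langle f, D\rangle}{\mathbb{E}[f]\cdot\mathbb{E}[D]},$$
so the hypothesis yields $\langle f, D\rangle \geq \eta \cdot \mathbb{E}[f]\cdot \mathbb{E}[D]$, and I just need to lower bound each of these expectations by something of the right order in $|A|/|G|$.

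The expectation $\mathbb{E}[D]$ is immediate: summing $|A_x|$ over $x\in G$ recovers $|A|$, so $\mathbb{E}[D] = |A|/|G|$. For $\mathbb{E}[f]$, I would use the basic fact that for any $g,h\colon G\to\R$, $\mathbb{E}[g\circ h] = \mathbb{E}[g]\mathbb{E}[h]$ (this follows by swapping the order of expectation in the definition of $\circ$). Applied termwise,
$$\mathbb{E}[f] = \sum_{x\in G} \mathbb{E}[1_{A_x}]\cdot \mathbb{E}[1_{A_x+x}] = \frac{1}{|G|^2}\sum_{x\in G}|A_x|^2,$$
since the shift by $x$ does not affect $\mathbb{E}[1_{A_x+x}] = |A_x|/|G|$.

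The only nontrivial step is bounding $\sum_x |A_x|^2$ from below, which is a one-line application of Cauchy--Schwarz: $\sum_x |A_x|^2 \geq (\sum_x |A_x|)^2/|G| = |A|^2/|G|$. Combining, $\mathbb{E}[f] \geq |A|^2/|G|^3$. Multiplying everything together gives
$$|G|^2 \langle f, D\rangle \geq |G|^2 \cdot \eta \cdot \frac{|A|^2}{|G|^3} \cdot \frac{|A|}{|G|} = \eta\cdot \frac{|A|^3}{|G|^2},$$
as desired. There is no real obstacle here; the statement is essentially an exercise in chasing definitions, with Cauchy--Schwarz doing the one bit of actual work to pass from the second moment $\sum_x |A_x|^2$ back to $|A|^2/|G|$.
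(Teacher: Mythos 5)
Your proof is correct and takes essentially the same route as the paper's: both unravel $\langle \mu_f, \mu_D\rangle$ into $\langle f, D\rangle/(\E[f]\E[D])$, compute $\E[D]=|A|/|G|$ and $\E[f]=\sum_x |A_x|^2/|G|^2$, lower bound $\sum_x |A_x|^2 \ge |A|^2/|G|$ by Cauchy--Schwarz, and finish with \Cref{lem:corners_count}. The only cosmetic difference is that you justify $\E[f]$ via the identity $\E[g\circ h]=\E[g]\E[h]$, which the paper leaves implicit.
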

\begin{proof}
    Unraveling the definition of $\langle \mu_f, \mu_D\rangle$, we have
    \begin{align*}
       \eta \le \left\langle \frac{1}{\E[f]}  \sum_{x \in G} 1_{A_x} \circ 1_{A_x+x}, \frac{D}{\E[D]}\right\rangle &= \frac{1}{\E[f]}\cdot\frac{1}{\E[D]} \sum_{x \in G}\left\langle 1_{A_x} \circ 1_{A_x+x}, D\right\rangle \\
        &= \frac{|G|^2}{\sum |A_x|^2}\cdot\frac{|G|} {\sum |A_x|} \sum_{x \in G}\left\langle 1_{A_x} \circ 1_{A_x+x}, D\right\rangle \\
        &\le \frac{|G|^3}{|A|^2}\cdot\frac{|G|}{|A|} \sum_{x \in G}\left\langle 1_{A_x} \circ 1_{A_x+x}, D\right\rangle \tag{Cauchy-Schwarz} \\
        &= \frac{|G|^2}{|A|^3} \times \text{(number of skew corners in $A$)}. \tag*{(\Cref{lem:corners_count}) \quad \qedhere}
    \end{align*}
\end{proof}

\subsection{Bohr sets}\label{sec:bohr}
To prove \Cref{thm:corners_bounds_ff}, we will take advantage of the abundance of subspaces in $\F_q^n$. As this no longer holds once we move to more general Abelian groups, we will turn to the machinery of \textit{Bohr sets}, introduced by Bourgain \cite{bourgain1999triples}, to prove \Cref{thm:corners_bounds_int}. We will only require the very basics, but one may entirely skip over this topic if they are solely interested in a high-level understanding of our results; all the main ideas are present in the simpler finite field case (\Cref{sec:finite_field}). 

\begin{definition}[Bohr sets]
    For a nonempty $\Gamma \subseteq \widehat{G}$ and $\phi \in [0,2]$, we define the Bohr set
    \[
        B = \mathrm{Bohr}(\Gamma, \phi) = \{x \in G : |1-\gamma(x)| \le \phi \text{ for all } \gamma \in \Gamma\},
    \]
    where $\Gamma$ is the \textit{frequency set}, $\phi$ is the \textit{width} (or \textit{radius}), and $|\Gamma|$ is the \textit{rank} $\rk(B)$.
\end{definition}

It follows from the definition that Bohr sets are symmetric, meaning that if $x \in B$, we must also have $-x \in B$. Note also that a Bohr set $B$ does not uniquely define its frequency set or width. Thus, we always implicitly view $B$ as the tuple $(\Gamma, \phi, \mathrm{Bohr}(\Gamma, \phi))$. However, the notation $B' \subseteq B$ only refers to containment of the actual set itself and does not depend on the frequency set or width. Indeed, there are Bohr sets $B, B'$ where $B' \subset B$, but the frequency set of $B$ is not contained in the frequency set of $B'$, and the width of $B'$ is not smaller than than the width of $B$. To handle these situations, we follow \cite{schoen2016roth} in defining \textit{sub-Bohr} sets.
\begin{definition}
    Let $B=\mathrm{Bohr}(\Gamma, \phi)$, $B'=\mathrm{Bohr}(\Gamma', \phi')$ be Bohr sets.
    We say that $B'$ is a \emph{sub-Bohr set} of $B$, denoted $B' \le B$, if $\Gamma' \supseteq \Gamma$ and $\phi' \leq \phi$.
\end{definition}

\begin{observation}
    Let $B' \leq B$ for Bohr sets $B, B'$. Then, $B' \subseteq B$.
\end{observation}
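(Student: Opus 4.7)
The statement is essentially immediate from unwinding the definitions, so the plan is short. By hypothesis, $B' \le B$ means the frequency sets satisfy $\Gamma' \supseteq \Gamma$ and the widths satisfy $\phi' \le \phi$. I would pick an arbitrary element $x \in B' = \mathrm{Bohr}(\Gamma', \phi')$ and show that $x \in B = \mathrm{Bohr}(\Gamma, \phi)$.

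By the definition of the Bohr set $B'$, we have $|1 - \gamma(x)| \le \phi'$ for every $\gamma \in \Gamma'$. Restricting this quantification to the smaller frequency set $\Gamma \subseteq \Gamma'$ (and using $\phi' \le \phi$ to relax the bound) yields $|1 - \gamma(x)| \le \phi$ for every $\gamma \in \Gamma$, which is exactly the membership condition for $B$. Hence $x \in B$, establishing $B' \subseteq B$.

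There is really no main obstacle here; the observation is a sanity check that the partial order $\le$ on the tuple data $(\Gamma,\phi,\mathrm{Bohr}(\Gamma,\phi))$ refines ordinary set inclusion, so the proof is just two lines of definition chasing.
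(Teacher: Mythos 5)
Your proof is correct and is exactly the definition-chasing argument the paper intends (the paper leaves this observation unproved): membership in $B'$ gives $|1-\gamma(x)|\le\phi'\le\phi$ for all $\gamma\in\Gamma\subseteq\Gamma'$, hence $x\in B$.
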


A common operation performed on Bohr sets is \textit{dilation}. If $B=\mathrm{Bohr}(\Gamma,\phi)$ is a Bohr set and $\rho>0$, then the corresponding \textit{dilate} of $B$ is $B_{\rho}=\mathrm{Bohr}(\Gamma, \rho \phi)$. The following observation about sub-Bohr set dilates will be useful in later arguments.

\begin{observation}\label{obs:subbohrdil}
    Let $B' \leq B$ for Bohr sets $B, B'$. Then, $B'_{\rho} \leq B_{\rho}$ for all $\rho > 0$. In particular, $B'_{\rho} \subseteq B_{\rho}$.
\end{observation}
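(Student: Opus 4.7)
The plan is to unwind the definitions: since $B' \le B$ means $B = \mathrm{Bohr}(\Gamma, \phi)$ and $B' = \mathrm{Bohr}(\Gamma', \phi')$ satisfy $\Gamma' \supseteq \Gamma$ and $\phi' \leq \phi$, I would immediately read off the dilates as $B_\rho = \mathrm{Bohr}(\Gamma, \rho\phi)$ and $B'_\rho = \mathrm{Bohr}(\Gamma', \rho\phi')$ and verify the two defining conditions of the sub-Bohr relation $B'_\rho \le B_\rho$.

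The first condition, containment of frequency sets $\Gamma' \supseteq \Gamma$, is unchanged by dilation and therefore transfers for free. The second condition, $\rho\phi' \le \rho\phi$, follows by multiplying the inequality $\phi' \le \phi$ by $\rho > 0$. This gives $B'_\rho \le B_\rho$.

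The ``in particular'' statement is then an immediate consequence of the preceding observation in the paper (that $B' \le B$ implies $B' \subseteq B$), applied to the Bohr sets $B'_\rho$ and $B_\rho$ in place of $B'$ and $B$.

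There is no obstacle here --- the entire statement is a bookkeeping check that the sub-Bohr relation is preserved under common dilation, and both halves reduce to one-line verifications against the definitions introduced just above.
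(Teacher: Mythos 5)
Your proof is correct and is exactly the argument the paper intends: the observation is stated without proof precisely because it reduces to the definitional check you perform (the frequency-set containment $\Gamma' \supseteq \Gamma$ is unaffected by dilation, and multiplying $\phi' \le \phi$ by $\rho > 0$ gives $\rho\phi' \le \rho\phi$), with the final containment $B'_\rho \subseteq B_\rho$ following from the preceding observation that sub-Bohr sets are subsets.
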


If small dilations do not substantially change the size of a Bohr set $B$, we call $B$ \textit{regular}. Regular Bohr sets will be particularly useful for our purposes, as informally, one can view a regular Bohr set of rank $r$ analogously to a subspace of co-dimension $r$.

\begin{definition}[Regular Bohr sets]
    A Bohr set $B$ of rank $r$ is \emph{regular} if for all $0 \le \kappa \le 1/100r$, we have 
    \[
    |B_{1+\kappa}| \le (1+100\kappa r)|B| \quad\text{and}\quad |B_{1-\kappa}| \ge (1-100\kappa r)|B|.
    \]
\end{definition}

We record some useful lemmas, most of which are listed in the appendix of \cite{bloom2023kelley}. See \cite[Section 4.4]{tao2006additive} or \cite[Chapter 6]{bloom21} for proofs of the first two standard results, as well as a more thorough treatment of Bohr set machinery.

\begin{lemma}\label{lem:regular_dilation}
    For any Bohr set $B$, there exists $\rho \in [\frac{1}{2}, 1]$ such that $B_{\rho}$ is regular. 
\end{lemma}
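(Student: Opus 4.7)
The plan is to argue by contradiction via an averaging/pigeonhole on the size function $\rho \mapsto |B_\rho|$. Writing $r = \mathrm{rk}(B)$ and $h(\rho) = \log_2 |B_\rho|$, we first want the bound $h(1) - h(1/2) = O(r)$. This follows by a standard covering argument: since $B_{1/4} - B_{1/4} \subseteq B_{1/2}$ (by the triangle inequality in the defining metric $x \mapsto \max_{\gamma \in \Gamma} |1-\gamma(x)|$), a maximal $B_{1/4}$-separated subset of $B_1$ has size at most $|B_{1/2}|/|B_{1/4}|$ (by pigeonhole on cosets) and its translates of $B_{1/2}$ cover $B_1$; iterating the doubling bound $|B_{2\sigma}| \le C^r |B_\sigma|$ a constant number of times over the dyadic interval $[1/2,1]$ yields the claim.

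Next, suppose for contradiction that no $\rho \in [1/2, 1]$ makes $B_\rho$ regular. Then for every such $\rho$, there exists some $\kappa_\rho \in (0, 1/(100r)]$ witnessing a failure of regularity, i.e.\ either
\[
    |B_{\rho(1+\kappa_\rho)}| > (1+100\kappa_\rho r)\,|B_\rho|
    \quad\text{or}\quad
    |B_{\rho(1-\kappa_\rho)}| < (1-100\kappa_\rho r)\,|B_\rho|.
\]
Taking logs, each failure produces an interval $I_\rho \subset [1/2, 1]$ of length $\asymp \kappa_\rho \rho$ across which $h$ jumps by at least $\log_2(1 + 100\kappa_\rho r) \asymp \kappa_\rho r$. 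In other words, the ``slope'' of $h$ on $I_\rho$ is at least a constant times $r/\rho \ge r$.

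Now I would run a Vitali-type covering on $[1/2, 1]$: greedily extract a disjoint subcollection of the intervals $I_\rho$ whose total length is at least a constant fraction of $1/2$. Since $h$ is monotone non-decreasing, summing its jumps on this disjoint subcollection gives
\[
    h(1) - h(1/2) \;\ge\; \sum_i (\text{jump on } I_{\rho_i}) \;\gtrsim\; r \cdot \sum_i |I_{\rho_i}| \;\gtrsim\; r,
\]
with an arbitrarily large implicit constant controlled by $100$ (the constant in the definition of regularity). Choosing the constants appropriately, this contradicts the upper bound $h(1) - h(1/2) = O(r)$ established above, completing the proof.

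The main obstacle is the Vitali-style selection: one has to carefully balance the greedy choice of intervals so that their total length recovers a positive fraction of $[1/2, 1]$, while ensuring that the witness $\kappa_\rho$ cannot be made so small that the jump is negligible. This is a standard maneuver but is the only nontrivial step; the covering estimate $h(1) - h(1/2) = O(r)$ and the monotonicity of $h$ are immediate from the definitions.
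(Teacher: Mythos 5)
The paper does not actually prove \Cref{lem:regular_dilation}; it quotes it as a standard result and points to Tao--Vu and Bloom for proofs. Measured against that standard proof, your second and third steps are exactly the right argument and are essentially correct: if no $\rho\in[\tfrac12,1]$ were regular, each witness $\kappa_\rho\le\tfrac{1}{100r}$ yields an interval of length $\kappa_\rho\rho$ with $\rho$ as an endpoint on which the nondecreasing function $h(\rho)=\log_2|B_\rho|$ increases by at least $\log_2(1+100\kappa_\rho r)\ge 100\kappa_\rho r$, i.e.\ at rate $\ge 100r$ per unit length; a Vitali-type selection (legitimate here since all lengths are at most $\tfrac{1}{100r}$) keeps a disjoint subfamily of total length at least an absolute constant, so the total increase is $\ge c\cdot 100\,r$, which beats an $O(r)$ bound on $h(1)-h(1/2)$ with room to spare. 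Your worry about $\kappa_\rho$ being tiny is a non-issue, since the jump is proportional to the interval length with constant $100r$ regardless of the scale; the only cosmetic fixes needed are that intervals attached to $\rho$ near the endpoints protrude slightly outside $[\tfrac12,1]$, so the growth bound should be run on the marginally larger range $[\tfrac12(1-\tfrac{1}{100r}),\,1+\tfrac{1}{100r}]$.

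The genuine gap is in your first step, the claim $h(1)-h(1/2)=O(r)$. The internal covering argument you sketch does not establish it: there are no ``cosets'' to pigeonhole on (Bohr sets are not subgroups), a maximal $B_{1/4}$-separated subset of $B$ only gives bounds of the form $|B|\le\big(|B_{9/8}|/|B_{1/8}|\big)\,|B_{1/4}|$, i.e.\ it controls one ratio of Bohr-set sizes by another of the same kind, and the rank never enters; moreover the ``doubling bound $|B_{2\sigma}|\le C^r|B_\sigma|$'' that you then iterate is precisely the statement to be proven, so the step is circular as written. The rank has to enter through the dual side: one maps $B$ to $\mathbb{T}^{|\Gamma|}$ via $x\mapsto(\gamma(x))_{\gamma\in\Gamma}$ and pigeonholes on $O(1/\rho)^{|\Gamma|}$ boxes of angular width about $\rho\phi$, which is exactly the content of \Cref{lem:bohrsize}. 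Since that lemma is already stated in the paper, the clean repair is to invoke it directly: $|B_{1/2}|\ge 8^{-r}|B|$ gives $h(1)-h(1/2)\le 3r$ (and the analogous bound on the slightly enlarged range), after which your Vitali argument closes the proof, with the constant $100$ in the regularity definition comfortably dominating the $3$ from the size bound and the constant lost in the covering selection.
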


\begin{lemma}\label{lem:bohrsize}
    If $\rho \in (0,1)$ and $B$ is a Bohr set of rank $r$, then $|B_{\rho}| \geq (\rho/4)^r |B|$.
\end{lemma}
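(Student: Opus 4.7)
The plan is to carry out a standard pigeonhole argument in the frequency torus $(\R/\Z)^r$. Writing $\Gamma = \{\gamma_1, \ldots, \gamma_r\}$, for each $x \in G$ let $\theta_j(x) \in \R/\Z$ be defined by $\gamma_j(x) = e^{2\pi i \theta_j(x)}$, and let $\|\theta\|$ denote the distance from $\theta$ to the nearest integer. The identity $|1 - e^{2\pi i \theta}| = 2|\sin(\pi \theta)|$ shows $x \in B$ iff $\|\theta_j(x)\| \leq \alpha := \arcsin(\phi/2)/\pi$ for every $j$, and $x \in B_\rho$ iff $\|\theta_j(x)\| \leq \beta := \arcsin(\rho \phi/2)/\pi$ for every $j$. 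Thus $B$ and $B_\rho$ are preimages, under the map $\psi(x) := (\theta_1(x), \ldots, \theta_r(x))$, of axis-aligned cubes in $(\R/\Z)^r$ of half-sides $\alpha$ and $\beta$, respectively.

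The core step is a pigeonhole: I would partition $[-\alpha, \alpha]^r$ into $M := \lceil 2\alpha/\beta \rceil^r$ axis-aligned sub-cubes of side at most $\beta$, after lifting $\psi$ to this fundamental domain. Some sub-cube contains at least $|B|/M$ elements of $B$. For any two such elements $x, y$, each coordinate of $\psi(x - y) = \psi(x) - \psi(y)$ has norm at most $\beta$, so $x - y \in B_\rho$. Fixing one such $x$ and varying the others yields $|B_\rho| \geq |B|/M$.

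It remains to verify $M \leq (4/\rho)^r$. The convexity-based bound $\arcsin(t) \leq \pi t/2$ on $[0,1]$ together with $\arcsin(t) \geq t$ gives $2\alpha/\beta \leq \pi/\rho$, so it suffices to show $\lceil \pi/\rho \rceil \leq 4/\rho$ for $\rho \in (0, 1)$. This follows from a short case split: if $\rho \geq \pi/4$ then $\pi/\rho \leq 4 \leq 4/\rho$, and if $\rho < \pi/4$ then $\rho < 4 - \pi$, whence $\lceil \pi/\rho \rceil \leq \pi/\rho + 1 \leq 4/\rho$. The main (minor) difficulty is precisely this constant tracking in the last step; the rest is routine and appears, e.g., in \cite{tao2006additive, bloom21}.
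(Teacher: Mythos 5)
Your pigeonhole argument is correct (including the constant tracking via $\arcsin(t)\le \pi t/2$ and $\arcsin(t)\ge t$, and the case split showing $\lceil \pi/\rho\rceil \le 4/\rho$), and it is essentially the same covering-by-small-cubes proof that the paper defers to in its cited references (Tao--Vu, Section 4.4, and Bloom's notes). The only caveat is the degenerate width $\phi=0$ (where $2\alpha/\beta$ is undefined), but there $B_\rho=B$ and the bound is trivial.
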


\begin{lemma}[{\cite[Lemma 4.6]{bloom2020breaking}}]\label{lem:boundingWithDilation}
    There is a constant $c > 0$ such that the following holds. If $B$ is a regular Bohr set of rank $r$ and $\nu$ is a probability measure supported on $B_{\rho}$, with $\rho \le c/r$, then
    $$
        \mu_{B} \leq 2\mu_{B_{1 + \rho}} \ast \nu. 
    $$
\end{lemma}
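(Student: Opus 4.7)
The plan is to verify $\mu_B(x) \le 2(\mu_{B_{1+\rho}} \ast \nu)(x)$ pointwise in $x \in G$. For $x \notin B$ the left side vanishes, so the only interesting case is $x \in B$, where $\mu_B(x) = |G|/|B|$.

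For such $x$, I would expand the convolution using the normalization $\mu_{B_{1+\rho}}(y) = (|G|/|B_{1+\rho}|)\,1_{B_{1+\rho}}(y)$:
\[
(\mu_{B_{1+\rho}} \ast \nu)(x) \;=\; \frac{1}{|G|}\sum_{y \in G} \mu_{B_{1+\rho}}(y)\,\nu(x-y) \;=\; \frac{1}{|B_{1+\rho}|} \sum_{y \in G} 1_{B_{1+\rho}}(y)\, \nu(x-y).
\]
The key claim is that the indicator is redundant whenever $\nu(x-y) \neq 0$. Indeed, $\nu$ is supported on $B_\rho$, so such $y$ satisfy $x - y \in B_\rho$; combining this with $x \in B$ and the symmetry of $B_\rho$, the Bohr set quasi-triangle inequality yields $y \in B + B_\rho \subseteq B_{1+\rho}$. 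Concretely, for each $\gamma \in \Gamma$ defining $B$,
\[
|1 - \gamma(y)| \;\le\; |1 - \gamma(x)| + |1 - \gamma(-(x-y))| \;\le\; \phi + \rho\phi \;=\; (1+\rho)\phi.
\]
Dropping the indicator and substituting $z = x - y$, the sum collapses to $\sum_z \nu(z) = |G|\cdot\E[\nu] = |G|$, so $(\mu_{B_{1+\rho}} \ast \nu)(x) = |G|/|B_{1+\rho}|$.

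The target inequality at $x \in B$ thus reduces to the purely quantitative statement $|B_{1+\rho}| \le 2|B|$, which is precisely what the regularity hypothesis delivers: $|B_{1+\rho}| \le (1 + 100\rho r)|B|$ whenever $100\rho r \le 1$. Taking $c = 1/100$ (or anything smaller) ensures $\rho \le c/r$ implies $1 + 100\rho r \le 2$, completing the proof.

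The only substantive step is the Bohr set quasi-triangle inequality used to trap the support of $y \mapsto \nu(x-y)$ inside $B_{1+\rho}$; this is the standard workhorse for convolution arguments on Bohr sets. Everything else is bookkeeping about the normalization conventions for $\mu_B$, $\mu_{B_{1+\rho}}$, and $\nu$, together with a direct appeal to regularity calibrated against the factor of $2$ on the right-hand side.
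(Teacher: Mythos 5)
Your proof is correct: trapping the support of $y \mapsto \nu(x-y)$ inside $B_{1+\rho}$ via the Bohr-set triangle inequality, evaluating the convolution exactly as $|G|/|B_{1+\rho}|$ on $B$, and invoking regularity with $c \le 1/100$ is exactly the standard argument. The paper does not prove this lemma but imports it from \cite{bloom2020breaking}, and your argument matches the proof given there, so there is nothing to add.
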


\begin{lemma}[{\cite[Lemma 4.5]{bloom2020breaking}}]\label{lem:BohrConv}
    If $B$ is a regular Bohr set of rank $r$ and $\nu$ is a probability measure supported on $B_{\rho}$, with $\rho \in (0,1)$, then
    $$
        \|\mu_{B} \ast \nu - \mu_B \|_1 \le  O(\rho r).
    $$
\end{lemma}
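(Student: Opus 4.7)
The plan is to reduce the estimate to a statement about how little a Bohr set moves under shifts by elements of a small dilate, and then invoke regularity. First I would unpack the convolution: since $\nu$ is a probability measure supported on $B_\rho$, I can rewrite
$$
(\mu_B \ast \nu)(x) \;=\; \mathop{\E}_{y} \mu_B(y)\nu(x-y) \;=\; \frac{1}{|G|}\sum_{z \in B_\rho} \nu(z)\, \mu_{B+z}(x),
$$
exhibiting $\mu_B \ast \nu$ as a convex combination of the shifted measures $\mu_{B+z}$ (noting that $\mu_B(x-z) = \mu_{B+z}(x)$ and that $\tfrac{\nu(z)}{|G|}$ is a probability distribution on $B_\rho$). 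By the triangle inequality,
$$
\|\mu_B \ast \nu - \mu_B\|_1 \;\le\; \frac{1}{|G|}\sum_{z \in B_\rho} \nu(z)\, \|\mu_{B+z} - \mu_B\|_1,
$$
so it suffices to show $\|\mu_{B+z} - \mu_B\|_1 = O(\rho r)$ for every individual $z \in B_\rho$.

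The next step is to compute this single-shift error. Since $\mu_B$ is constant $|G|/|B|$ on $B$ and zero elsewhere, a direct calculation gives $\|\mu_{B+z} - \mu_B\|_1 = |B \triangle (B+z)|/|B|$. I would then locate the symmetric difference inside the ``annulus'' $B_{1+\rho}\setminus B_{1-\rho}$. Concretely, writing $B = \mathrm{Bohr}(\Gamma,\phi)$, the triangle inequality $|1-\gamma(x)\gamma(z)| \le |1-\gamma(x)| + |1-\gamma(z)|$ applied to $x \in B$, $z \in B_\rho$ yields $x \pm z \in B_{1+\rho}$, so both $B$ and $B+z$ sit inside $B_{1+\rho}$. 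The same inequality applied to $x \in B_{1-\rho}$, $z \in B_\rho$ shows $B_{1-\rho} \subseteq B \cap (B+z)$. Hence
$$
|B\triangle(B+z)| \;\le\; |B_{1+\rho}| - |B_{1-\rho}|.
$$

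Finally I would invoke regularity. If $\rho \le 1/100r$, the definition of regularity gives
$$
|B_{1+\rho}| - |B_{1-\rho}| \;\le\; \bigl((1+100\rho r) - (1-100\rho r)\bigr)|B| \;=\; 200\rho r\,|B|,
$$
so $\|\mu_{B+z} - \mu_B\|_1 \le 200\rho r$, and averaging over $z$ finishes this case. For the remaining range $\rho > 1/100r$, the desired bound $O(\rho r) = \Omega(1)$ is automatic from the trivial estimate $\|\mu_B \ast \nu - \mu_B\|_1 \le \|\mu_B \ast \nu\|_1 + \|\mu_B\|_1 = 2$. I do not anticipate a significant obstacle here; the only mildly delicate step is the ``annulus'' containment, where one must carefully use both that $B$ is closed under small perturbations and that the inner dilate remains closed under enlargement by $B_\rho$ elements — everything else follows from the definition of regularity and the convexity reduction.
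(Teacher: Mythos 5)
Your proof is correct: the convex-combination reduction to a single shift, the identity $\|\mu_{B+z}-\mu_B\|_1=|B\triangle(B+z)|/|B|$, the containment of the symmetric difference in $B_{1+\rho}\setminus B_{1-\rho}$, and the regularity estimate (plus the trivial bound when $\rho>1/100r$) are all sound. The paper does not prove this lemma but cites it as \cite[Lemma 4.5]{bloom2020breaking}, and your argument is essentially the standard proof given there, so there is nothing to flag.
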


We will make use of the following corollary. 

\begin{corollary}\label{cor:approxbohr}
    If $B$ is a regular Bohr set of rank $r$ and $\nu$ is a probability measure supported on $B_{\rho}$, with $\rho \in (0,1)$, and $f \colon G \to \mathbb{R}$, then  
    $$
            |\langle f \ast \nu, \mu_B\rangle - \langle f, \mu_B \rangle| \le O\left(\|f\|_{\infty} \cdot \rho r\right). 
    $$
\end{corollary}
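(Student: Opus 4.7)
The plan is to reduce this to \Cref{lem:BohrConv} by a standard duality move. First I would use the adjoint identity $\langle f \ast g, h \rangle = \langle f, h \circ g \rangle$ (the reverse of the adjoint property recorded in the paper) to rewrite
\[
\langle f \ast \nu, \mu_B \rangle = \langle f, \mu_B \circ \nu \rangle.
\]
Next I would observe that every Bohr set is symmetric under $x \mapsto -x$, since $|1 - \gamma(-x)| = |\overline{1 - \gamma(x)}| = |1 - \gamma(x)|$, so $\mu_B$ is an even function. A one-line change of variables $y \mapsto -y$ then gives $(\mu_B \circ \nu)(x) = \E_y \mu_B(y)\nu(x+y) = \E_y \mu_B(y)\nu(x-y) = (\mu_B \ast \nu)(x)$, and consequently
\[
\langle f \ast \nu, \mu_B \rangle - \langle f, \mu_B \rangle = \langle f,\, \mu_B \ast \nu - \mu_B \rangle.
\]

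Second, I would apply the usual $L^\infty$--$L^1$ duality (i.e., Hölder with exponents $\infty$ and $1$) to obtain
\[
\bigl|\langle f,\, \mu_B \ast \nu - \mu_B \rangle\bigr| \;\le\; \|f\|_\infty \cdot \|\mu_B \ast \nu - \mu_B\|_1,
\]
and then invoke \Cref{lem:BohrConv}, which is exactly the statement $\|\mu_B \ast \nu - \mu_B\|_1 \le O(\rho r)$ under the given hypotheses on $B$ and $\nu$. Combining these two inequalities yields the claimed $O(\|f\|_\infty \cdot \rho r)$ bound.

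There is no real obstacle: the whole corollary is a bookkeeping combination of the adjoint identity, symmetry of Bohr sets, Hölder's inequality, and \Cref{lem:BohrConv}. The only place one has to be slightly careful is verifying that $\mu_B \circ \nu$ coincides with $\mu_B \ast \nu$, which is where the symmetry of $B$ enters.
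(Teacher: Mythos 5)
Your proposal is correct and follows essentially the same route as the paper: rewrite $\langle f \ast \nu, \mu_B\rangle$ via the adjoint identity, use the symmetry of Bohr sets to pass from the $\circ$-convolution to the $\ast$-convolution, and then apply $L^\infty$--$L^1$ duality together with \Cref{lem:BohrConv}. The only (cosmetic) difference is that you exploit the evenness of $\mu_B$ directly to identify $\mu_B \circ \nu$ with $\mu_B \ast \nu$, whereas the paper reflects $\nu$ to $\nu(-\cdot)$, which is the same bookkeeping justified by the same symmetry.
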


\begin{proof}
    We have $\langle f \ast \nu, \mu_B \rangle = \langle f, \mu_B \circ \nu \rangle = \langle f, \mu_B\rangle + \langle f, \mu_B \circ \nu - \mu_B\rangle.$
    Since Bohr sets are symmetric, we can replace $\nu(x)$ with $\nu(-x)$ and maintain that $\nu$ is supported on $B_{\rho}$. It suffices to bound
    \begin{align*}
        |\langle f, \mu_B \ast \nu - \mu_B \rangle| \le \|f \|_{\infty} \| \mu_B \ast \nu - \mu_B \|_{1} \le O(\|f \|_{\infty} \cdot \rho r),
    \end{align*}
    which follows by \Cref{lem:BohrConv} and the triangle inequality. 
\end{proof}
We briefly remark that in several subsequent arguments, we will again exploit the symmetry of subspaces and Bohr sets $V$ to use the identity $\mu_V(x) = \mu_V(-x)$ for all $x$.

\subsection{Spread sets}

A key definition underpinning Kelley and Meka's breakthrough result on 3APs \cite{kelley2023strong} is a pseudorandomness notion they call \textit{spreadness}. A subset of $\F_q^n$ is spread if it has no significant density increment by restricting to a large affine subspace.
\begin{definition}[$(r, \lambda)$-spread]\label{def:og_spread}
    Let $r \ge 1$ and $\lambda > 1$. 
    A set $A \subseteq \F_q^n$ is \emph{$(r, \lambda)$-spread} if for every subspace $V \subseteq \F_q^n$ of co-dimension at most $r$ and shift $x \in \F_q^n$, we have
    \[
        \frac{|(A-x) \cap V|}{|V|} \le \lambda \cdot \frac{|A|}{|\F_q^n|}.
    \]
\end{definition} 

In light of \Cref{lem:corners_count}, we will need the following generalization of spreadness that applies to multiple sets. It should intuitively be clear that the definition we choose needs to be invariant to shifts, since the number of skew corners in a set has this property (\Cref{obs:shift}).
\begin{definition}[$(r, \lambda)$-simultaneously spread]\label{def:sim_spread_ff}
    Let $r \ge 1$ and $\lambda > 1$.
    A collection $\{A_i\}_{i \in S}$ of subsets of $\F_q^n$ is \emph{$(r, \lambda)$-simultaneously spread} if for every subspace $V \subseteq \F_q^n$ of co-dimension at most $r$ and shifts $\{x_i\}_{i \in S}$ in $\F_q^n$, we have
    \[
        \sum_{i \in S} \left(\frac{|(A_i-x_i) \cap V|}{|V|}\right)^2 \leq \lambda \cdot \sum_{i \in S}\left(\frac{|A_i|}{|\F_q^n|} \right)^2.
    \]
\end{definition}
When our ambient space is a vector space $W \subseteq \F_q^n$, we will say the collection is $(r,\lambda)$-simultaneously spread \emph{in $W$} if it is $(r,\lambda)$-simultaneously spread after being mapped according to an isomorphism from $W$ to $\F_q^{\dim W}$.

Additionally, we require a version with Bohr sets replacing subspaces for general Abelian groups. For technical reasons, it will simplify later analysis to measure density with respect to a particular Bohr set rather than the ambient space (as in \Cref{def:sim_spread_ff}).

\begin{definition}[$(r, \delta, \lambda)$-simultaneously spread]\label{def:sim_spread_int}
    Let $r \ge 1$, $\delta\in (0,1)$, and $\lambda > 1$. 
    A collection $\{A_i\}_{i \in S}$ of subsets of a regular Bohr set $B\subseteq G$ is \emph{$(r, \delta, \lambda)$-simultaneously spread in $B$} if for every regular Bohr set $B' \leq B$ of rank at most $\rk(B)+r$ and measure 
    $\mu(B') \geq \delta \mu(B)$, and shifts $\{x_i\}_{i \in S}$ in $G$, we have
    \[
        \sum_{i \in S} \left(\frac{|(A_i-x_i) \cap B'|}{|B'|}\right)^2 \leq \lambda \cdot \sum_{i \in S}\left( \frac{|A_i|}{|B|} \right)^2.
    \]
\end{definition}

It was useful in \cite{kelley2023strong} to use $\|\mu_A \ast \mu_V\|_{\infty} \le \lambda$ as an equivalent form of \Cref{def:og_spread}. We will later need to exploit a similar relationship with an alternative definition of simultaneous spreadness. (The proof of this connection explains why the summands in \Cref{def:sim_spread_ff} and \Cref{def:sim_spread_int} are squared.)

\begin{lemma}\label{lem:inftospread_ff}
    Let $r \ge 1$, $\lambda > 1$, $\{A_i\}_{i \in S}$ be an $(r, \lambda)$-simultaneously spread collection of subsets of $\F_q^n$, and $\alpha = \sum_{i \in S} (|A_i|/|\F_q^n|)^2$. Then for every subspace $V \subseteq \F_q^n$ of co-dimension at most $r$, we have
    \[
        \left \|\left(\frac{1}{\alpha}\sum_{i \in S} 1_{A_i} \circ 1_{A_i} \right)  \ast \mu_{V} \right\|_{\infty} \le \sqrt{\lambda}. 
    \]
\end{lemma}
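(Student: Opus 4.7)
The plan is to exploit spectral positivity to reduce $\|\cdot\|_\infty$ to the value at $0$, and then apply the simultaneous spread hypothesis through a carefully chosen Cauchy--Schwarz. Set $f = \frac{1}{\alpha}\sum_{i\in S} 1_{A_i}\circ 1_{A_i}$. Since $\widehat{1_{A_i}\circ 1_{A_i}} = |\widehat{1_{A_i}}|^2 \ge 0$ and $\widehat{\mu_V} = 1_{V^\perp}\ge 0$, the function $f\ast\mu_V$ is spectrally non-negative, so $\|f\ast\mu_V\|_\infty = (f\ast\mu_V)(0)$. Moreover, $\widehat{\mu_V}$ is $\{0,1\}$-valued, hence $\widehat{\mu_V}^2 = \widehat{\mu_V}$, which gives the Fourier identity $(1_{A_i}\circ 1_{A_i})\ast\mu_V = (1_{A_i}\ast\mu_V)\circ(1_{A_i}\ast\mu_V)$. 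Evaluating at $0$,
\[
    (f\ast\mu_V)(0) \;=\; \frac{1}{\alpha}\sum_{i\in S}\|1_{A_i}\ast\mu_V\|_2^2.
\]

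Next, note that $1_{A_i}\ast\mu_V$ is constant on every coset $C$ of $V$, taking the value $a_i(C) := |A_i\cap C|/|V|$; hence $\|1_{A_i}\ast\mu_V\|_2^2 = (|V|/|\F_q^n|)\sum_C a_i(C)^2$. Let $C_i^\star$ be a coset maximizing $a_i(C)$. The trivial bound $a_i(C)^2 \le a_i(C_i^\star)\,a_i(C)$, together with $\sum_C a_i(C) = |A_i|/|V|$, yields
\[
    \|1_{A_i}\ast\mu_V\|_2^2 \;\le\; a_i(C_i^\star)\cdot \frac{|A_i|}{|\F_q^n|}.
\]

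Summing over $i$ and applying Cauchy--Schwarz,
\[
    \sum_{i\in S}\|1_{A_i}\ast\mu_V\|_2^2 \;\le\; \sqrt{\sum_{i\in S} a_i(C_i^\star)^2}\;\cdot\;\sqrt{\sum_{i\in S}(|A_i|/|\F_q^n|)^2}.
\]
The simultaneous spread hypothesis, applied with shifts $x_i$ chosen so that $x_i + V = C_i^\star$, bounds the first factor by $\sqrt{\lambda\alpha}$; the second factor equals $\sqrt{\alpha}$ by definition of $\alpha$. Dividing by $\alpha$ yields $(f\ast\mu_V)(0)\le \sqrt{\lambda}$, as desired.

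The key subtlety --- and the reason \Cref{def:sim_spread_ff} permits \emph{per-$i$} shifts rather than a single global one --- is precisely this final Cauchy--Schwarz. A naive argument using the same shift for every $i$ only yields $\sum_{i}\|1_{A_i}\ast\mu_V\|_2^2 \le \lambda\alpha$, which gives the weaker bound $\lambda$ in place of $\sqrt{\lambda}$. Allowing each $a_i$ to be peaked at its own coset $C_i^\star$ costs one square root via Cauchy--Schwarz, but in return leverages the full strength of simultaneous spread to recover the sharper $\sqrt{\lambda}$.
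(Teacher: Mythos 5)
Your proof is correct and, at its core, follows the same route as the paper: bound each set's contribution by its density times its maximal coset density, apply Cauchy--Schwarz over $i$, and invoke simultaneous spreadness with per-$i$ shifts. The only (cosmetic) difference is that you reduce the $L^\infty$-norm to the value at $0$ via spectral positivity and the identity $(1_{A_i}\circ 1_{A_i})\ast\mu_V=(1_{A_i}\ast\mu_V)\circ(1_{A_i}\ast\mu_V)$, whereas the paper simply evaluates at a maximizing point and uses $\langle 1_{A_i}\circ 1_{A_i},\mu_{V+x}\rangle\le\|1_{A_i}\|_1\|1_{A_i}\ast\mu_V\|_\infty$.
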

\begin{proof}
    Let $x\in \F_q^n$ be an element maximizing the input  to the $L^{\infty}$-norm. Then by Cauchy-Schwarz,
    \[
        \left \|\left(\sum_{i \in S} 1_{A_i} \circ 1_{A_i} \right)  \ast \mu_{V} \right\|_{\infty} = \sum_{i \in S} \langle 1_{A_i} \circ 1_{A_i}, \mu_{V+x} \rangle \leq \sum_{i \in S} \|1_{A_i}\|_{1} \|1_{A_i} \ast \mu_{V}\|_{\infty} \le \alpha^{1/2} \left( \sum_{i \in S} \| 1_{A_i} \ast \mu_{V} \|_{\infty}^2 \right)^{1/2}.
    \]
    Now let $x_i$ be a shift which maximizes $|(A_i - x_i) \cap V|$ for each $i \in S$. By our spreadness assumption, we have
    \[
        \sum_{i \in S} \| 1_{A_i} \ast \mu_{V} \|_{\infty}^2 = \sum_{i \in S} \left( \frac{|(A_i - x_i) \cap V|}{|V|}\right)^2 \le \lambda \cdot \sum_{i \in S}\left(\frac{|A_i|}{|\F_q^n|} \right)^2 = \lambda\alpha.
    \]
    Substituting the second equation into the first yields the result.
\end{proof}

We also need an analogous lemma for Bohr sets. While similar, we present the proof here to illustrate why there is an additional $\mu(B)^{-1}$ factor.
\begin{lemma}\label{lem:inftospread}
    Let $r \ge 1$, $\eps \in (0,1)$, $\lambda > 1$, $\{A_i\}_{i \in S}$ be an $(r, \delta, \lambda)$-simultaneously spread collection of subsets of a regular Bohr set $B \subseteq G$, and $\alpha = \sum_{i \in S} (|A_i|/|G|)^2$. Then for every regular Bohr set $B' \le B$ of rank at most $\rk(B)+r$ and measure $\mu(B') \ge \delta\mu(B)$, we have
    \[
        \left \|\left(\frac{1}{\alpha}\sum_{i \in S} 1_{A_i} \circ 1_{A_i} \right)  \ast \mu_{B'} \right\|_{\infty} \le \sqrt{\lambda}\cdot\mu(B)^{-1}. 
    \]
\end{lemma}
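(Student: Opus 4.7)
The plan is to mirror the structure of \Cref{lem:inftospread_ff} essentially verbatim, replacing the subspace $V$ by the sub-Bohr set $B'$ and tracking where the normalization differs. The only substantive change is that spreadness of $\{A_i\}$ is measured relative to the ambient Bohr set $B$ (\Cref{def:sim_spread_int}), whereas $\alpha$ is defined with densities relative to $G$; this mismatch is exactly where the $\mu(B)^{-1}$ factor will arise.

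First, I would pick $x \in G$ maximizing the $L^{\infty}$-norm on the left-hand side, so that
\[
    \left\|\Bigl(\sum_{i\in S} 1_{A_i} \circ 1_{A_i}\Bigr) \ast \mu_{B'}\right\|_{\infty}
    = \sum_{i\in S} \bigl\langle 1_{A_i} \circ 1_{A_i},\, \mu_{B'+x}\bigr\rangle
    = \sum_{i\in S} \bigl\langle 1_{A_i},\, 1_{A_i} \ast \mu_{B'+x}\bigr\rangle,
\]
using the adjoint identity. Then H\"older's inequality bounds each summand by $\|1_{A_i}\|_1 \cdot \|1_{A_i} \ast \mu_{B'+x}\|_\infty$, and translation invariance of the sup-norm lets us replace $\mu_{B'+x}$ by $\mu_{B'}$. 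A second application of Cauchy--Schwarz, in the $i$-variable, then yields
\[
    \sum_{i\in S} \|1_{A_i}\|_1\,\|1_{A_i}\ast\mu_{B'}\|_{\infty}
    \le \Bigl(\sum_{i\in S} \|1_{A_i}\|_1^2\Bigr)^{1/2}
        \Bigl(\sum_{i\in S} \|1_{A_i}\ast\mu_{B'}\|_{\infty}^2\Bigr)^{1/2}
    = \alpha^{1/2}\Bigl(\sum_{i\in S} \|1_{A_i}\ast\mu_{B'}\|_\infty^2\Bigr)^{1/2}.
\]

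For each $i$, choose a shift $x_i$ maximizing $(1_{A_i}\ast \mu_{B'})(x_i)$; since $B'$ is symmetric, this equals $|(A_i-x_i)\cap B'|/|B'|$, matching the quantity controlled in \Cref{def:sim_spread_int}. The hypotheses $B' \le B$, $\rk(B')\le \rk(B)+r$, and $\mu(B')\ge \delta\mu(B)$ are precisely what is needed to invoke simultaneous spreadness, giving
\[
    \sum_{i\in S} \|1_{A_i}\ast \mu_{B'}\|_\infty^2
    \le \lambda \sum_{i\in S}\Bigl(\frac{|A_i|}{|B|}\Bigr)^2
    = \lambda\,\mu(B)^{-2}\sum_{i\in S}\Bigl(\frac{|A_i|}{|G|}\Bigr)^2
    = \lambda\,\alpha\,\mu(B)^{-2}.
\]
Combining gives $\alpha^{1/2}\cdot (\lambda\alpha)^{1/2}\mu(B)^{-1} = \alpha\sqrt{\lambda}\,\mu(B)^{-1}$, and dividing by $\alpha$ produces the claimed bound.

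There is no real obstacle here; the argument is a direct transcription of the finite-field proof. The only thing to be careful about is the bookkeeping between the two different normalizations ($|G|$ versus $|B|$), which is exactly what generates the extra factor of $\mu(B)^{-1}$ on the right-hand side.
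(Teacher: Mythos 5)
Your proof is correct and follows essentially the same route as the paper's: maximize over the shift $x$, bound each term $\langle 1_{A_i}\circ 1_{A_i},\mu_{B'+x}\rangle$ by $\|1_{A_i}\|_1\|1_{A_i}\ast\mu_{B'}\|_\infty$, apply Cauchy--Schwarz over $i$, and then invoke the $(r,\delta,\lambda)$-spreadness at the maximizing shifts $x_i$, with the $|G|$-versus-$|B|$ normalization producing the $\mu(B)^{-1}$ factor. You merely spell out a couple of steps (the adjoint identity and translation invariance of the sup-norm) that the paper leaves implicit.
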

\begin{proof}
    Let $x\in G$ be an element maximizing the input to the $L^{\infty}$-norm. Then by Cauchy-Schwarz,
    \[
        \left \|\left(\sum_{i \in S} 1_{A_i} \circ 1_{A_i} \right)  \ast \mu_{B'} \right\|_{\infty} = \sum_{i \in S} \langle 1_{A_i} \circ 1_{A_i}, \mu_{B'+x} \rangle \leq \sum_{i \in S} \|1_{A_i}\|_{1} \|1_{A_i} \ast \mu_{B'}\|_{\infty} \le \alpha^{1/2} \left( \sum_{i \in S} \| 1_{A_i} \ast \mu_{B'} \|_{\infty}^2 \right)^{1/2}.
    \]
    Now let $x_i$ be a shift which maximizes $|(A_i - x_i) \cap B'|$. By our spreadness assumption, we have
    \[
        \sum_{i \in S} \| 1_{A_i} \ast \mu_{B'} \|_{\infty}^2 = \sum_{i \in S} \left( \frac{|(A_i - x_i) \cap B'|}{|B'|}\right)^2 \le \lambda \cdot \sum_{i \in S}\left(\frac{|A_i|}{|B|} \right)^2 = \lambda\alpha\cdot\mu(B)^{-2}.
    \]
    Substituting the second equation into the first yields the result.
\end{proof}

\section{The finite field case}\label{sec:finite_field}
In this section, we prove \Cref{thm:corners_bounds_ff}, restated below for convenience. Note that Pohoata and Zakharov \cite{pohoata2024skew} used the polynomial method to obtain the stronger bound $O(q^{(2-c)n})$ (for a constant $c > 0$ depending on $q$). However, their techniques do not seem to generalize to the integer case.
\mainff*

The analogous result for 3APs in \cite{kelley2023strong} is proved by repeatedly restricting the set of interest to a large, denser subspace until no such subspaces exist (i.e., the set is spread). Then, they show spread sets have approximately the same number of 3APs as a random set, and since there cannot be too many trivial 3APs, the original set size cannot be too large.

This intuition guides our approach, although the situation is more complicated for skew corners. It is helpful to think about the converse of the above discussion. For a set $A$ of large density, if $A$ contains no 3APs, then $A$ must exhibit a density increment on a reasonably large subspace. In the skew corners case, we start with a large subset $A$ of $\F_q^n \times \F_q^n$, and denote by $\{A_g\}$ its columns. We are asking if there are many solutions of the form $x - y + g \in D$ for $x,y \in A_g$, where $D$ can be thought of as the set of columns of decent density. By Kelley and Meka's result, if each of the sets $A_g$ was spread, then we would have many solutions and thus many skew corners. So, we know that if there are no skew corners, then most of the $A_g$ are \textit{not} spread, and most of the $A_g$ exhibit a density increment on some large subspace $V_g$. The issue is, however, that a priori these subspaces $V_g$ might be completely uncorrelated. 

One might hope that if the sum of $1_{A_g} \circ 1_{A_g+g}$ is far from uniform, then the individual deviations of the $1_{A_g} \circ 1_{A_g+g}$ are cooperating, and so there should be a density increment which is compatible with the $A_g$ on average. Indeed, this ends up being what happens, and we show that there is a common subspace $V$ which ``explains'' the lack of uniformity for the sum of convolutions. Namely, for most $A_g$, there is some affine shift $V+x_g$ which admits a density increment on $A_g$. For technical reasons, we will measure progress in terms of the sum of the squared densities $(|A_g \cap (V+x_g)|/|V|)^2$. 

It is tempting to ignore the collection of $A_g$, and simply proceed with a single $A_h$ of highest density. The Kelley-Meka techniques allow one to restrict $A_h$ to an affine subspace $V$ on which the convolution $1_{A_h} \circ 1_{A_h+h}$ is close to uniform. The issue, however, is that this approach does not ensure that $D$ has large density on $V$. There may in fact be no nonempty columns indexed by elements from $V$, and so this naive approach does not work as is. For this reason, our density increment process needs access to many of the $A_g$ at once, so that we can restrict the set of ``rows'' in which the $A_g$ are contained, as well as the set of ``columns'' by which the $A_g$ are indexed while maintaining density guarantees.

The main steps then are to argue that (1) a collection of sets $\{A_g\}_{g \in G}$ can be restricted in such a way that it no longer admits a collective density increment on a subspace and (2) if a collection of $\{A_g\}_{g \in G}$ contains enough points but fails to have skew corners, then there must be some subspace on which the $A_g$ collectively obtain a density increment.

\subsection{Obtaining simultaneous spreadness}
A useful property of spreadness is that it is easy to obtain via an iterative density increment argument. The generalization to simultaneous spreadness preserves this property with only slightly more work.

\begin{lemma}\label{lem:spreadness_ff}
    Let $\eps \in (0,1), r \ge 1,$ and $\{A_g\}_{g \in \F_q^n}$ be a collection of subsets of $\F_q^n$ with $\sum |A_g|^2 \geq 2^{-d}|\F_q^n|^3$. Then there exists a subspace $V \subseteq \F_q^n$ of co-dimension at most $O(rd/\eps)$ and shifts $\{x_g\}_{g \in \F_q^n}$, $w \in \F_q^n$ such that
    \begin{enumerate}
        \item The collection $\{(A_g- x_g) \cap V\}_{g \in V+w}$ is $(r, 1+\eps)$-simultaneously spread in $V$, and
        \item $\sum_{g \in V+w} |(A_g-x_g) \cap V|^2 \geq 2^{-d}|V|^3$.
    \end{enumerate}
\end{lemma}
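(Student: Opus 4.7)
The plan is to prove the lemma by an iterative density-increment argument, standard in spirit but adapted to handle the fact that both the sets in the collection \emph{and} the index set get restricted simultaneously. I maintain a ``current state'' consisting of a subspace $V \subseteq \F_q^n$, shifts $\{x_g\}$, and a base point $w$ describing the index coset $V+w$. The potential to track is
\[
\alpha(V,w,\{x_g\}) \;=\; \frac{1}{|V|^3}\sum_{g \in V+w}\bigl|(A_g - x_g)\cap V\bigr|^2,
\]
which is the normalized version of the quantity in condition (2). Initially $V = \F_q^n$, $w = 0$, $x_g = 0$, and the hypothesis gives $\alpha \ge 2^{-d}$; in general $\alpha \le 1$ since each term in the sum is at most $|V|^2$ and there are $|V|$ terms.

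The iteration goes as follows. If the current collection $\{(A_g-x_g)\cap V\}_{g\in V+w}$ is $(r,1+\eps)$-simultaneously spread in $V$, we are done. Otherwise, there exist a subspace $U \subseteq V$ of codimension at most $r$ in $V$ and shifts $\{y_g\}_{g \in V+w}\subseteq V$ witnessing the failure of spreadness:
\[
\sum_{g\in V+w}\left(\frac{|(A_g-x_g-y_g)\cap U|}{|U|}\right)^{\!2} \;>\; (1+\eps)\sum_{g\in V+w}\left(\frac{|(A_g-x_g)\cap V|}{|V|}\right)^{\!2},
\]
using that $(((A_g-x_g)\cap V)-y_g)\cap U = (A_g-x_g-y_g)\cap U$ since $U \subseteq V$ and $y_g \in V$. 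I then partition the index coset $V+w$ into cosets of $U$; by pigeonhole some coset $U+w'$ satisfies
\[
\sum_{g\in U+w'}\left(\frac{|(A_g-x_g-y_g)\cap U|}{|U|}\right)^{\!2} \;\ge\; \frac{|U|}{|V|}(1+\eps)\sum_{g\in V+w}\left(\frac{|(A_g-x_g)\cap V|}{|V|}\right)^{\!2}.
\]
Setting $V_{\text{new}} = U$, $w_{\text{new}} = w'$, and $x^{\text{new}}_g = x_g + y_g$ for $g \in U+w'$, a direct calculation (multiplying through by $|U|^2$) shows $\alpha_{\text{new}} \ge (1+\eps)\,\alpha_{\text{old}}$. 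In particular condition (2), $\alpha \ge 2^{-d}$, is preserved throughout.

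Termination is now immediate: $\alpha$ is multiplied by at least $1+\eps$ at each step yet is bounded above by $1$, so the process halts after at most $\log_{1+\eps}(2^d) = O(d/\eps)$ iterations. Each iteration increases the global codimension of $V$ (inside $\F_q^n$) by at most $r$, giving a final codimension of $O(rd/\eps)$, which is exactly the stated bound. Upon termination the spreadness condition (1) holds by definition, and the density condition (2) holds because we preserved $\alpha \ge 2^{-d}$ at every step.

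The only mildly delicate point is the pigeonhole step, and in particular the bookkeeping with the $|U|/|V|$ factor: one has to be careful that the index set shrinks from $|V|$ elements to $|U|$ elements, and this is precisely why tracking the \emph{sum} of squared densities (equivalently $\alpha$, normalized by $|V|^3$) rather than an average over indices makes the potential behave correctly. Everything else — the existence of $U$ and $\{y_g\}$, the codimension accounting, and the termination — follows from the definition of simultaneous spreadness without further work.
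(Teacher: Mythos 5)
Your proposal is correct and follows essentially the same route as the paper: an iterative density-increment argument tracking the sum of squared densities (normalized by $|V|^3$), where a failure of simultaneous spreadness yields a witnessing subspace and shifts, a pigeonhole/averaging step restricts the index set to a single coset of that subspace, and the potential grows by a factor $1+\eps$ per step, forcing termination within $O(d/\eps)$ iterations and total codimension $O(rd/\eps)$. The only cosmetic difference is that the paper re-indexes the collection by the subspace itself after translating, while you keep the coset indexing; the bookkeeping is identical.
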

\begin{proof}
    If $\{A_g\}_{g \in \F_q^n}$ is not $(r,1+\eps)$-simultaneously spread, then there exists a subspace $V \subseteq \F_q^n$ of co-dimension at most $r$ and shifts $\{x_g\}_{g \in \F_q^n}$ in $\F_q^n$ such that
    \[
        \sum_{g \in \F_q^n} \left(\frac{|(A_g-x_g) \cap V|}{|V|}\right)^2 > (1+\eps) \sum_{g \in \F_q^n}\left( \frac{|A_g|}{|\F_q^n|} \right)^2.
    \]
    By averaging, there must exist an affine subspace $V+w$ with
    \begin{equation}\label{eq:avg_subspace}
        \sum_{g \in V+w} \left(\frac{|(A_g-x_g) \cap V|}{|V|}\right)^2 > (1+\eps) \frac{|V|}{|\F_q^n|} \sum_{g \in \F_q^n}\left( \frac{|A_g|}{|\F_q^n|} \right)^2.
    \end{equation}
    Construct a new collection $A'_g = (A_{g+w} - x_{g+w}) \cap V$ for $g \in V$. If $\{A'_g\}_{g \in V}$ is not $(r,1+\eps)$-simultaneously spread in $V$, we repeat this process with $\{A'_g\}_{g \in V}$ as the collection and $V \cong \F_q^{\dim(V)}$ as the ambient vector space, noting that \Cref{eq:avg_subspace} and our initial density assumption imply
    \[
        \sum_{g\in V}|A'_g|^2 > (1+\eps) \cdot 2^{-d}|V|^3.
    \]
    After $i$ iterations, the density will be at least $(1+\eps)^i\cdot2^{-d}$. Since it cannot exceed 1, we must obtain the desired collection in $O(d/\eps)$ iterations.
\end{proof}

\subsection{Using simultaneous spreadness}
Now that we have a simultaneously spread collection, it remains to show the corresponding set must have roughly as many skew corners as a random set of the same density. This section will focus on proving the following theorem.

\begin{restatable}{theorem}{strVSpsdFF}\label{thm:strucvspseudo_ff}
    Let $\eps \in (0,1)$. For a collection $\{A_g \}_{g \in \F_q^n}$ of subsets of $\F_q^n$ with $\sum_{g \in \F_q^n} |A_g| \geq 2^{-{d}}|\F_q^n|^2$, let $F = \sum_{g \in \F_q^n} 1_{A_g} \circ 1_{A_g+g}$, $F' = \sum_{g \in \F_q^n} 1_{A_g} \circ 1_{A_g}$, and $D \colon \F_q^n \to \Rplus$ be given by $D(x) = |A_x|$. Then either
    \begin{enumerate}
        \item 
        $|\langle \mu_F, \mu_D \rangle - 1| \leq \eps$, or
        \item
        There exists a subspace $V$ of co-dimension
        $O_{\eps}(d^8)$
        such that $\|\mu_{F'} \ast \mu_V\|_{\infty} \geq 1 + \frac{1}{32}\eps$.
    \end{enumerate}
\end{restatable}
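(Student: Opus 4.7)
The plan is to follow the Kelley-Meka dichotomy template \cite{kelley2023strong}, as developed in Bloom-Sisask \cite{bloom2023kelley}, adapted to the asymmetric skew-corners setting. Assume $|\langle \mu_F, \mu_D\rangle - 1| > \eps$; the goal is to produce a subspace $V$ of codimension $O_\eps(d^8)$ satisfying $\|\mu_{F'}\ast\mu_V\|_\infty \ge 1+\eps/32$.

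First, I would perform a short Fourier computation. By Parseval,
\[
    \langle F,D\rangle - \E[F]\,\E[D] = \sum_{\gamma \ne 0}\widehat{F}(\gamma)\,\overline{\widehat{D}(\gamma)}.
\]
Using the identity $1_{A_g+g}(x)=1_{A_g}(x-g)$, a direct calculation yields $\widehat{F}(\gamma) = \sum_g \gamma(-g)\,|\widehat{1_{A_g}}(\gamma)|^2$, whereas $\widehat{F'}(\gamma) = \sum_g |\widehat{1_{A_g}}(\gamma)|^2$ is manifestly non-negative. The triangle inequality then gives the pointwise bound $|\widehat{F}(\gamma)| \le \widehat{F'}(\gamma)$ for every $\gamma$. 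This is the key algebraic fact that lets us trade the ``twisted'' function $F$ for the spectrally non-negative $F'$: equivalently, $|\widehat{\mu_F}(\gamma)| \le \widehat{\mu_{F'}}(\gamma)$, so the deviation hypothesis upgrades to
\[
    \sum_{\gamma \ne 0}\widehat{\mu_{F'}}(\gamma)\,|\widehat{\mu_D}(\gamma)| \;\ge\; \eps.
\]

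Next, I would invoke the main analytic engine of Kelley-Meka. Given the above Fourier energy between the spectrally non-negative density $\mu_{F'}$ (of mean one) and the density $\mu_D$, their iterated H\"older / unbalancing / Chang-style $L^2$-Fourier density-increment chain outputs a subspace $V$ of codimension $O_\eps(d^8)$ on which $\mu_{F'}$ admits a coset of density at least $1+\eps/32$, which is exactly $\|\mu_{F'}\ast\mu_V\|_\infty \ge 1+\eps/32$. The exponent $8$ mirrors the codimension bound obtained for 3APs in \cite{kelley2023strong, bloom2023kelley}, coming from standard tracking of losses across the (roughly three) nested H\"older and spectral steps, with the effective density parameter here being $2^{-O(d)}$ so that $\log(1/\mathrm{density}) = O(d)$.

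The main obstacle is the asymmetry between the function driving the non-uniformity ($F$, whose Fourier coefficients carry phase factors $\gamma(-g)$) and the spectrally non-negative function ($F'$) on which we want a density increment. The pointwise Fourier bound $|\widehat{F}| \le \widehat{F'}$ neatly resolves this, reducing the analysis to the nicer function $F'$ throughout. A secondary concern is that both signs of the deviation (excess and deficit of skew corners) must be handled uniformly, but this is absorbed by the absolute values in the Fourier sums. Apart from these, the remaining work would be careful bookkeeping through the standard Kelley-Meka / Bloom-Sisask template.
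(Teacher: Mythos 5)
There is a genuine gap: after the (correct) observation that $|\widehat{F}(\gamma)| \le \widehat{F'}(\gamma)$, you defer everything to a black-box ``Kelley--Meka engine'' that neither exists in the form you describe nor applies directly to your situation. First, the engine is mischaracterized: the Kelley--Meka/Bloom--Sisask argument is \emph{not} a Chang-style $L^2$-Fourier density-increment chain (that route is exactly what is known to stall at polylogarithmic savings); it runs in physical space. Concretely, one first uses H\"older against $\mu_D$ with $\|\mu_D\|_\infty \le 2^d$ to get $\|\mu_F-1\|_p \ge \eps/2$ for $p=O(d)$, then the decoupling step (your Fourier inequality, applied in the paper to the centered functions $1_{A_g}-\alpha_g$ at the level of $L^p$ moments) to pass to $F'$, then the odd-moments/unbalancing lemma to convert two-sided deviation into an upward deviation $\|\mu_{F'}\|_{p'}\ge 1+\eps/4$, then dependent random choice to extract dense sets $M_1,M_2$ correlating with $S=\{\mu_{F'}>1+\eps/8\}$, and finally almost periodicity (Schoen--Sisask), not Chang's lemma, to produce the subspace $V$. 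Your proposed intermediate, $\sum_{\gamma\ne 0}\widehat{\mu_{F'}}(\gamma)|\widehat{\mu_D}(\gamma)|\ge\eps$, is not the input any of these lemmas take and is lossy: for instance, bounding $|\widehat{\mu_D}|\le 1$ only yields that $\mu_{F'}(0)$ is large, which is trivially true and useless for a subspace increment.

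Second, even granting the 3AP machinery, it is stated for a single set's $\mu_A\circ\mu_A$, whereas $F'$ is a normalized \emph{sum} of difference convolutions over the collection $\{A_g\}$. The step that genuinely requires a new idea is dependent random choice for such sums: the paper handles this by encoding the collection as $A=\{(g,y): y\in A_g\}\subseteq G\times G$, applying the sifting lemma there with $B_1'=B_2'=\{0\}\times \F_q^n$, and translating the resulting $M_1,M_2$ and their density bounds back to $G$ (a Cauchy--Schwarz step recovers the $2^{-d}$ density parameter). Your proposal never addresses this, yet it is precisely where the quantitative bookkeeping lives: the densities of $M_1,M_2$ come out as $2^{-O_\eps(dp)}=2^{-O_\eps(d^2)}$ with $p=O(d)$, and the fourth power in the almost-periodicity theorem then yields the codimension $O_\eps(d^8)$. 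As written, the exponent $8$ in your plan is asserted by analogy rather than derived, and the argument from your ``upgraded hypothesis'' to the subspace increment is missing.
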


The proof of 
\Cref{thm:corners_bounds_ff} follows from \Cref{thm:strucvspseudo_ff}. Namely, if our collection of sets $\{A_g\}_{g \in G}$ admits no collective density increment, and it is sufficiently dense, then it must contain roughly as many skew corners as a random set. 

\begin{proof}[Proof of \Cref{thm:corners_bounds_ff} from \Cref{thm:strucvspseudo_ff}]
    Let $A \subseteq \F_q^n \times \F_q^n$ be skew-corner-free with $|A| = 2^{-d}|\F_q^n|^2$, and define the collection $\{A_g\}_{g\in\F_q^n}$ by $A_g = \{y : (g,y) \in A\}$. By Cauchy-Schwarz, we have $\sum|A_g|^2 \ge 2^{-2d}|\F_q^n|^3$, so applying \Cref{lem:spreadness_ff} with $\{A_g\}_{g\in\F_q^n}$, $\eps\coloneqq 1/128$, $d\coloneqq 2d$, and $r>0$ to be determined later yields a collection of sets $\{(A_{g+w}-x_{g+w}) \cap V\}_{g \in V}$, where $V \subseteq \F_q^n$ is a co-dimension $O(dr)$ subspace and $\{x_g\}_{g \in \F_q^n}, w \in \F_q^n$ are shifts. For brevity, define $A'_g \coloneqq (A_{g+w} - x_{g+w}) \cap V$ and $A' \coloneqq \{(g,y): g \in V, y \in A'_g\} \subseteq V \times V$. Note that by \Cref{obs:shift}, $A$ has at least as many skew corners as $A'$, $|A'| \ge 2^{-2d} |V|^2$, and $\{A'_g\}_{g \in V}$ is $(r,1+1/128)$-simultaneously spread in $V$. 

    Apply \Cref{thm:strucvspseudo_ff} with $\eps \coloneqq 1/2$ to the new collection, and set $r=O(d^8)$ to be larger than the co-dimension bound in the second conclusion. If $|\langle \mu_F, \mu_D \rangle - 1| \leq 1/2$, \Cref{lem:normalized_corners_count} implies $A'$ contains at least
    \[
    \frac{1}{2}\cdot\frac{|A'|^3}{|V|^2} \ge 2^{-6d-1}|V|^4 \ge 2^{-\Omega(d^9)}|\F_q^n|^4
    \]
    skew corners. Since there are at most $|\F_q^n|^3$ trivial skew corners in $A'$, we are done after rearrangement.
    Otherwise, there exists a subspace $V$ of co-dimension 
    $\ell = O(d^8)$, with $\ell < r$, such that $\|\mu_{F'} \ast \mu_{V}\|_{\infty} \geq 1 + 1/64$. By \Cref{lem:inftospread_ff}, this contradicts the fact that $\{A'_{g}\}_{g \in V}$ is $(r,1+1/128)$-simultaneously spread.
\end{proof}

The proof of \Cref{thm:strucvspseudo_ff} follows from five major steps, which we briefly sketch. Throughout, let $F = \sum_g 1_{A_g} \circ 1_{A_g+g}$, $F' = \sum_g 1_{A_g} \circ 1_{A_g}$, and $D(x) = |A_x|$.
\begin{enumerate}
    \item Non-uniformity (\Cref{subsec:non-uniformity}) -- If there are few skew corners (i.e., $|\langle \mu_F, \mu_D \rangle - 1| > \eps$), then H\"{o}lder's inequality implies that $\|\mu_F - 1\|_p \geq \eps / 2$ for $p = O(d)$.

    \item Unbalancing (\Cref{subsec:unbalancing}) -- We then show it suffices to consider the spectrally non-negative $F'$, which we use to prove $\|\mu_{F'}\|_{p'} \geq 1+\eps / 4$ for some $p'=O_{\eps}(p)$.

    \item Dependent random choice (\Cref{subsec:sifting}) -- From here, we rely on a known application of dependent random choice to find two relatively dense sets $M_1, M_2$ whose difference convolution correlates well with elements of noticeable upward deviation from $F'$'s mean. That is, $\langle \mu_{M_1} \circ \mu_{M_2}, 1_S \rangle \ge 1-\eps/32$, where $S = \{x : \mu_{F'}(x) > 1 + \eps/8\}$.

    \item Almost periodicity (\Cref{subsec:almostPeriod}) -- Next, we use an almost periodicity result of \cite{schoen2016roth} to show there exists a subspace $V$ of co-dimension $O_{\eps}(d^8)$ such that
    $\langle \mu_V \ast \mu_{M_1} \circ \mu_{M_2}, 1_S \rangle \geq 1 - \eps/16$.

    \item Combining steps (\Cref{subsec:combining}) -- Putting these results together yields the second conclusion of \Cref{thm:strucvspseudo_ff}.
\end{enumerate}
These are essentially the same steps given in \cite{bloom2023kelley}, which is an excellent exposition and simplification of \cite{kelley2023strong}. Moreover, we are able to either black-box or closely follow the proofs of many of their results, highlighting the relationship between arithmetic progressions and skew corners.

\subsubsection{Non-uniformity}\label{subsec:non-uniformity}
The following lemma will allow us to argue that if there are very few skew corners, then $\|\mu_F - 1\|_p \geq \eps/2$. 

\begin{lemma}\label{lem:nonuniform_ff}
    Let $\eps > 0$ and $f \colon G \to \R$. If $\{A_g\}_{g \in G}$ is a collection of subsets of $G$ where $\sum_{g \in G} |A_g| \geq 2^{-d}|G|^2$, and $D \colon G \to \Rplus$ is given by $D(x) = |A_x|$, then either 
    \begin{enumerate}
        \item $|\langle \mu_f, \mu_D \rangle - 1| \leq \eps$ or
        
        \item $\|\mu_f - 1\|_p \geq \eps / 2$ for an integer $p = O(d)$. 
    \end{enumerate}
\end{lemma}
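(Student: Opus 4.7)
The plan is to use duality via Hölder's inequality, noting that $\mu_D$ is a distribution and hence has small $L^q$-norm for $q$ close to $1$. Since $\mu_D = D / \E[D]$ has mean $1$, we can rewrite
\[
    \langle \mu_f, \mu_D \rangle - 1 = \langle \mu_f, \mu_D \rangle - \langle 1, \mu_D \rangle = \langle \mu_f - 1, \mu_D \rangle.
\]
If the first alternative fails, then $|\langle \mu_f - 1, \mu_D \rangle| > \eps$. First I would apply Hölder with conjugate exponents $p, q$ to obtain
\[
    \eps < |\langle \mu_f - 1, \mu_D \rangle| \leq \|\mu_f - 1\|_p \cdot \|\mu_D\|_q.
\]
The whole game is then to choose an integer $p = O(d)$ for which $\|\mu_D\|_q \le 2$; the second conclusion then follows immediately.

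Next I would estimate $\|\mu_D\|_q$. Writing $\alpha = \E[D] = |G|^{-1}\sum_g |A_g| \geq 2^{-d}|G|$ by hypothesis, and using the pointwise bound $D(x) = |A_x| \leq |G|$, I would note that for $q \geq 1$,
\[
    \E_x D(x)^q \leq |G|^{q-1}\cdot \E_x D(x) = |G|^{q-1}\cdot\alpha,
\]
whence
\[
    \|\mu_D\|_q^q = \frac{\E_x D(x)^q}{\alpha^q} \leq \frac{|G|^{q-1}}{\alpha^{q-1}} \leq 2^{d(q-1)}.
\]
Hence $\|\mu_D\|_q \leq 2^{d(q-1)/q} = 2^{d/p}$, and taking the integer $p = \lceil d \rceil$ gives $\|\mu_D\|_q \leq 2$, so $p = O(d)$ as required.

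I do not anticipate any real obstacle; the only things to watch for are the edge case where $d < 1$ (in which case we can take $p = 1$ and the bound $\|\mu_D\|_q$ is trivially at most $2$) and making sure $p$ is an integer, which is handled by rounding up to $\lceil d \rceil$ and noting the bound $2^{d/\lceil d \rceil} \leq 2$ is preserved. Note that this argument is a verbatim analog of the standard non-uniformity step in \cite{bloom2023kelley}, with the counting measure $\mu_D$ playing the role of the usual indicator measure; the crucial quantitative input is merely the pointwise bound $D \leq |G|$ together with the density hypothesis on $\E[D]$.
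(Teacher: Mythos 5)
Your proposal is correct and follows essentially the same route as the paper: write $\langle \mu_f,\mu_D\rangle-1=\langle \mu_f-1,\mu_D\rangle$, apply H\"older with exponents $p$ and $p/(p-1)$, and bound the dual norm of $\mu_D$ using $D\le |G|$ and $\E[D]\ge 2^{-d}|G|$ before taking $p=\lceil d\rceil$. The only cosmetic difference is that you bound $\|\mu_D\|_q$ by estimating $\E_x D(x)^q$ directly, whereas the paper interpolates via $\|\mu_D\|_q\le\|\mu_D\|_\infty^{1/p}\|\mu_D\|_1^{1-1/p}$ with $\|\mu_D\|_\infty\le 2^d$; these are the same calculation.
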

\begin{proof}
    If (1) fails, then by H\"{o}lder's inequality, we have for any $p \ge 1$ that
    \[
        \eps < \left | \langle \mu_{f} - 1, \mu_D \rangle \right | \leq \| \mu_{f} - 1 \|_{p} \| \mu_D \|_{1 + \frac{1}{p-1}} \leq \| \mu_f - 1 \|_{p} \| \mu_D \|_{\infty}^{1/p}  \|\mu_D\|_{1}^{1 - 1/p} = \| \mu_f - 1\|_p \|\mu_D\|_{\infty}^{1/p}.
    \]
    Note that $\|\mu_D \|_{\infty} \leq 2^d$, since $D(x) \leq |G|$ and $\mathbb{E}[D] \geq 2^{-d}|G|$. Setting $p = \ceil{d}$ yields (2). 
\end{proof}

\subsubsection{Unbalancing}\label{subsec:unbalancing}
The next lemma will allow us to consider the function $F' = \sum_g 1_{A_g} \circ 1_{A_g}$ rather than $F = \sum_g 1_{A_g} \circ 1_{A_g+g}$. 

\begin{lemma}\label{lem:decoupling_ff}
    Let $p \geq 1$ be an integer and $\{f_g \}_{g \in S}$ a collection of real-valued functions on $G$ indexed by some set $S \subseteq G$, we have
    $$
        \left \| \sum_{g \in S} f_g \circ f_g \right \|_p \geq \left \| \sum_{g \in S} f_g \circ f_g^{g} \right \|_p
    $$
    where $f^g$ denotes the function $f^g(x)=f(x-g)$. 
\end{lemma}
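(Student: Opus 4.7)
The plan is to pass to Fourier space and exploit the spectral non-negativity of $f_g \circ f_g$. The first observation is that the RHS is simply a shifted version of the LHS: a direct computation gives
$$
(f_g \circ f_g^g)(x) \;=\; \mathop{\E}_y f_g(y)\, f_g(x+y-g) \;=\; (f_g \circ f_g)(x-g).
$$
Writing $h_g = f_g \circ f_g$, $H = \sum_{g \in S} h_g$, and $H' = \sum_{g \in S} h_g^g$, we therefore have $\widehat{h_g}(\gamma) = |\widehat{f_g}(\gamma)|^2 \ge 0$ and $\widehat{h_g^g}(\gamma) = \gamma(-g)\,\widehat{h_g}(\gamma)$. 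Summing over $g$ and applying the triangle inequality gives the pointwise Fourier domination
$$
|\widehat{H'}(\gamma)| \;\le\; \sum_{g \in S} |\widehat{f_g}(\gamma)|^2 \;=\; \widehat{H}(\gamma), \qquad \widehat{H}(\gamma) \;\ge\; 0.
$$

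Next, for an even integer $p = 2k$, I would convert the $L^{2k}$-norm into a Parseval sum. Using the identity $\|G\|_{2k}^{2k} = \langle G^k, G^k\rangle = \sum_\gamma |\widehat{G^k}(\gamma)|^2$ together with the convolution theorem $\widehat{G^k} = \widehat{G}^{\star k}$ (the unnormalized $k$-fold convolution on $\widehat{G}$), the termwise triangle inequality inside the convolution yields
$$
|\widehat{(H')^k}(\gamma)| \;\le\; \bigl(|\widehat{H'}|\bigr)^{\star k}(\gamma) \;\le\; \widehat{H}^{\star k}(\gamma) \;=\; \widehat{H^k}(\gamma),
$$
and the final quantity is non-negative because $\widehat{H}$ is. Squaring and summing over $\gamma \in \widehat{G}$ gives $\|H'\|_{2k}^{2k} \le \|H\|_{2k}^{2k}$. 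If an odd value of $p$ is needed later, one can simply round up to the next even integer, which costs only a constant factor in the application (to be absorbed in the subsequent unbalancing and sifting steps).

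The only real obstacle is bookkeeping around the paper's normalization conventions: under $\widehat{f}(\gamma) = \E_x f(x)\gamma(-x)$, pointwise multiplication corresponds to \emph{unnormalized} convolution in frequency, and Parseval reads $\|G\|_2^2 = \sum_\gamma |\widehat{G}(\gamma)|^2$. Once these are aligned, the argument is mechanical, with essentially all of the content sitting in the two-line identification $f_g \circ f_g^g = (f_g \circ f_g)^g$ and the spectral non-negativity of each $f_g \circ f_g$.
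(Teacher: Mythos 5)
Your proof is correct and essentially the same as the paper's: both pass to the Fourier side, use $\widehat{f_g\circ f_g^{g}}(\gamma)=\gamma(-g)\,|\widehat{f_g}(\gamma)|^{2}$ together with the triangle inequality against the spectrally non-negative $\sum_{g}f_g\circ f_g$, and your Parseval-on-$H^{k}$ computation for $p=2k$ is just a regrouping of the paper's moment expansion $\|F\|_p^p=\sum_{\alpha_1+\cdots+\alpha_p=0}\prod_i\widehat{F}(\alpha_i)$. The one difference is that you prove the inequality only for even $p$ and round up at the point of application, which is harmless (by monotonicity of $L^p$ norms the hypothesis $\|\mu_F-1\|_p\ge\eps$ survives the increase of $p$, and the conclusion only requires $p'=O_{\eps}(p)$) and is in effect also what the paper's argument needs, since its first displayed identity equates $\|\cdot\|_p^p$ with a signed $p$-th moment, which is only literally valid for even $p$ (or non-negative integrands), exactly the restriction made explicit in the integer-case analogue.
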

\begin{proof}
    We have
    $$
        \left \| \sum_{g \in S} f_g \circ f_g^{g} \right \|_p^p = 
        \sum_{\alpha_1 + \dots + \alpha_p = 0} \prod_{i=1}^p \left(\sum_{g \in S} |\widehat{f}_g(\alpha_i)|^2 \cdot \chi_{\alpha_i}(g) \right) 
        \le \sum_{\alpha_1 + \dots + \alpha_p = 0} \prod_{i=1}^p \left(\sum_{g \in S} |\widehat{f}_g(\alpha_i)|^2 \right) = \left \| \sum_{g \in S} f_g \circ f_g \right \|_p^p.
    $$
\end{proof}

The merit of working with $F'$ is that it is spectrally non-negative, so all of its odd moments are non-negative. Kelley and Meka \cite{kelley2023strong} showed that if a function $f$ admits non-negative odd moments as well as deviations from uniform in the $p$-norm, then $f + 1$ must also admit strong deviations \textit{upwards} from uniform. The specific version of this lemma we will use comes from \cite{bloom2023kelley}, which adapts {\cite[Proposition 5.7]{kelley2023strong}} to work when the norm is taken with respect to a spectrally non-negative distribution $\nu$.

\begin{restatable}[{\cite[Lemma 7]{bloom2023kelley}}]{lemma}{oddmoments}\label{lem:oddmoments}
    Let $\eps \in (0,1)$ and $\nu \colon G \to \Rplus$ be some probability measure such that $\widehat{\nu} \geq 0$. Let $f \colon G \to \mathbb{R}$ be such that $\widehat{f} \geq 0$. If $\|f\|_{p(\nu)} \geq \eps$ for some $p \geq 1$, then $\|f+1\|_{p'(\nu)} \geq 1 + \eps/2$ for some integer $p'=O_{\eps}(p)$.
\end{restatable}

Note that we will only apply \Cref{lem:oddmoments} when $\eps$ is a constant, so the exact dependence on $\eps$ is inconsequential. The utility of this lemma comes when applied to the mean zero part of a function $f$, namely $f_0 \coloneqq f - \E[f]$. If $\E[f_0^p] \geq 0$ for odd $p$, then downward deviations from uniform in $f_0$ imply \textit{upwards} deviations from uniform in $f$. In particular, we will use the above lemma to exhibit upwards deviations in $F' = \sum_{g} 1_{A_g} \circ 1_{A_g}$. Later on, we will see how to use this to obtain structural information on the $\{A_g\}_{g \in G}$. Combining the previous two lemmas, we obtain the following corollary.

\begin{corollary}\label{cor:unbalancing_ff}
    Let $\eps \in (0,1)$, $F = \sum_{g \in G} 1_{A_g} \circ 1_{A_g+g}$, $F' = \sum_{g \in G} 1_{A_g} \circ 1_{A_g}$, where $\{A_g\}_{g \in G}$ is a collection of subsets of $G$. If $\|\mu_{F} -1\|_p \ge \eps$ for some integer $p \ge 1$, then $\|\mu_{F'}\|_{p'} \ge 1+\eps/2$ for some integer $p' = O_\eps(p)$.
\end{corollary}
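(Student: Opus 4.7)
The plan is to combine the two preceding lemmas: \Cref{lem:decoupling_ff} transfers the $L^p$ deviation from $\mu_F$ to $\mu_{F'}$, and \Cref{lem:oddmoments} then converts that deviation into an upward deviation above the mean.

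Because $\E[1_{A_g}\circ 1_{A_g+g}] = \E[1_{A_g}\circ 1_{A_g}] = (|A_g|/|G|)^2$, the means agree: $\E[F]=\E[F']=:\alpha$. The hypothesis is thus equivalent to $\|F-\alpha\|_p\ge \eps\alpha$. By replacing $p$ with $p+1$ if needed (using monotonicity of $L^p$ norms under the probability measure on $G$), I may assume $p$ is even, so that the Fourier expansion used in the proof of \Cref{lem:decoupling_ff} goes through without sign issues on the possibly-negative functions below. Applying \Cref{lem:decoupling_ff} to the mean-centered functions $\tilde f_g := 1_{A_g}- |A_g|/|G|$ and noting (via a routine expansion of the difference convolution) that $\sum_g \tilde f_g \circ \tilde f_g = F'-\alpha$ and $\sum_g \tilde f_g \circ \tilde f_g^g = F-\alpha$, the lemma yields
\[
\|\mu_{F'}-1\|_p \;=\; \|F'-\alpha\|_p/\alpha \;\ge\; \|F-\alpha\|_p/\alpha \;=\; \|\mu_F-1\|_p \;\ge\; \eps.
\]

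Next, $F'$ is a sum of self-difference-convolutions, so $\widehat{F'}\ge 0$ and hence $\mu_{F'}-1$ is spectrally non-negative (its zero Fourier coefficient vanishes by construction). Applying \Cref{lem:oddmoments} with $\nu\equiv 1$ (the uniform distribution, trivially spectrally non-negative) and $f=\mu_{F'}-1$ gives
\[
\|\mu_{F'}\|_{p'} \;=\; \|(\mu_{F'}-1)+1\|_{p'} \;\ge\; 1+\eps/2
\]
for some integer $p'=O_\eps(p)$, which is precisely the desired conclusion.

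The only delicate point is ensuring \Cref{lem:decoupling_ff} can be applied in a form that controls the \emph{deviations} $F-\alpha$ and $F'-\alpha$ rather than $F$ and $F'$ themselves; the identification of these deviations with $\sum_g \tilde f_g\circ\tilde f_g$ and $\sum_g\tilde f_g\circ\tilde f_g^g$ via mean-centering handles this cleanly, and \Cref{lem:oddmoments} then applies as a complete black box.
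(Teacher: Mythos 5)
Your proposal is correct and takes essentially the same route as the paper's proof: mean-center the indicators via $f_g = 1_{A_g}-\alpha_g$, identify $\sum_g f_g\circ f_g = F'-\alpha$ and $\sum_g f_g\circ f_g^g = F-\alpha$, apply \Cref{lem:decoupling_ff}, and then feed the spectrally non-negative $\mu_{F'}-1$ into \Cref{lem:oddmoments} with the uniform measure. Your additional step of bumping $p$ to an even integer (via monotonicity of norms on a probability space) is a harmless, slightly more careful handling of the odd-$p$ sign issue in \Cref{lem:decoupling_ff}, not a different approach.
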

\begin{proof}
    Let $\alpha_g = |A_g|/|G|$ and $\alpha = \sum_{g \in G} \alpha_g^2$, so that $\frac{1}{\alpha} F = \mu_F$ and $\frac{1}{\alpha} F' = \mu_{F'}$. Setting $f_g = 1_{A_g} - \alpha_g$, we have 
    $$
        \sum_{g \in G} f_g \circ f_g = \sum_{g \in G} \left(1_{A_g} - \alpha_g\right) \circ \left(1_{A_g} - \alpha_g \right) = \sum_{g \in G} 1_{A_g} \circ 1_{A_g} - \alpha_g^2 = F' - \alpha. 
    $$
    Similarly, we have
    $$
        \sum_{g \in G} f_g \circ f^g_g = \sum_{g \in G} \left(1_{A_g} - \alpha_g\right) \circ \left(1_{A_g}^g - \alpha_g \right) = \sum_{g \in G} 1_{A_g} \circ 1_{A_g+g} - \alpha_g^2 = F - \alpha.   
    $$
    If we apply \Cref{lem:decoupling_ff} with $S \coloneqq G$, this gives $\|\mu_{F'} - 1\|_{p} \geq \eps$. Since $\mu_{F'} - 1$ is spectrally non-negative, we can apply \Cref{lem:oddmoments} to obtain $\|\mu_{F'}\|_{p'} \geq 1 + \eps/2$ for some integer $p' = O_{\eps}(p)$. 
\end{proof}

\subsubsection{Dependent random choice}\label{subsec:sifting}
At this point, we can assume we are working with a sum of difference convolutions $F' = \sum_{g \in G} 1_{A_g} \circ 1_{A_g}$ which admits upwards deviations from uniform in an appropriate $p$-norm. We will want to extract information about the collection $\{A_g\}_{g \in G}$ itself from $F'$. It will be useful for us to find a ``robust witness'' to upward deviations from the mean. One can view a robust witness as being a convolution which correlates well with the set of points on which $F'$ has a large deviation upwards. We will need the following lemma by Bloom and Sisask \cite{bloom2023kelley} using dependent random choice (originally dubbed \textit{sifting} by Kelley and Meka \cite{kelley2023strong}).
\begin{lemma}[{\cite[Lemma 8]{bloom2023kelley}}]\label{lem:sifting}
    Let $\eps, \delta \in (0,1)$ and $p \geq 1$ be an integer. Let $B_1, B_2 \subseteq G$, and let $\nu = \mu_{B_1} \circ \mu_{B_2}$. For any finite set $A \subseteq G$, if 
    $$
        S = \{x \in G : (\mu_A \circ \mu_A)(x) > (1-\eps) \| \mu_A \circ \mu_A \|_{p(\nu)} \},
    $$
    then there are $M_1 \subseteq B_1$ and $M_2 \subseteq B_2$ such that
    $$
        \langle \mu_{M_1} \circ \mu_{M_2}, 1_S \rangle \geq 1 - \delta \quad\text{and}\quad\min \left(\mu_{B_1}(M_1), \mu_{B_2}(M_2) \right) \ge \left( \E[1_A] \cdot \| \mu_A \circ \mu_A \|_{p(\nu)} \right)^{2p + O_{\eps, \delta}(1)}.
    $$
\end{lemma}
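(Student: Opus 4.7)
I will use dependent random choice, in the style of the Kelley--Meka sifting argument. Sample a tuple $\vec{w} = (w_1, \ldots, w_t)$ i.i.d.\ uniformly from $G$ with $t = p + O_{\eps, \delta}(1)$, and define
\[
    M_i(\vec{w}) = B_i \cap \bigcap_{j=1}^t (A - w_j), \qquad i = 1, 2.
\]
Writing $F = \mu_A \circ \mu_A$, $L = \|F\|_{p(\nu)}$, and $\alpha = \E[1_A]$, the independence and uniformity of the $w_j$ give the pointwise identity $\Pr[b_1 \in M_1, b_2 \in M_2] = (\alpha^2 F(b_2-b_1))^t$, since this probability equals $\Pr_{w \sim G}[w+b_1, w+b_2 \in A]^t$ and $|A \cap (A-(b_2-b_1))|/|G| = \alpha^2 F(b_2-b_1)$. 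Summing over $(b_1, b_2) \in B_1 \times B_2$ and using $\E_{b_1 \in B_1, b_2 \in B_2}\, g(b_2 - b_1) = \E_x\, \nu(x)\, g(x)$ rewrites the expected product of sizes as a $\nu$-moment:
\[
    \E[\,|M_1||M_2|\,] = |B_1||B_2|\,\alpha^{2t}\,\|F\|_{t(\nu)}^t \ge |B_1||B_2|\,\alpha^{2t}\,L^t,
\]
the inequality by Jensen applied with the probability measure $\nu$ and $t \ge p$.

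\textbf{Bad mass and averaging.} Let $N(\vec{w})$ count pairs $(m_1, m_2) \in M_1 \times M_2$ with $m_2 - m_1 \notin S$. Since $F \le (1-\eps)L$ pointwise on $S^c$, writing $F^t = F^{t-p}\cdot F^p$ and bounding the first factor by $((1-\eps)L)^{t-p}$ on $S^c$ gives $\E[N] \le |B_1||B_2|\,\alpha^{2t}(1-\eps)^{t-p}L^t$. Choosing $t - p = \Theta(\eps^{-1}\log(1/\delta))$ makes $(2/\delta)(1-\eps)^{t-p} \le 1/2$, and hence $\E[|M_1||M_2| - (2/\delta) N] \ge \tfrac{1}{2}|B_1||B_2|\alpha^{2t}L^t$. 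Averaging produces some $\vec{w}^*$ realizing this value, at which both $|M_1||M_2| \ge \tfrac{1}{2}|B_1||B_2|\alpha^{2t}L^t$ and $N \le (\delta/2)|M_1||M_2|$ hold simultaneously. The latter rearranges to the correlation bound $\langle \mu_{M_1}\circ\mu_{M_2}, 1_S\rangle \ge 1-\delta$; the former, combined with the trivial bound $|M_i| \le |B_i|$, gives $\min_i \mu_{B_i}(M_i) \ge \tfrac{1}{2}\alpha^{2t}L^t$.

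\textbf{Main obstacle.} The principal subtlety is that $M_1$ and $M_2$ are coupled through the shared tuple $\vec{w}$, so the averaging argument controls only the product of their sizes and not the individual sizes. Transferring product control to a minimum-density bound uses the lossy inequality $|M_i| \le |B_i|$, producing a bound of the form $\alpha^{O_{\eps,\delta}(p)} L^{O_{\eps,\delta}(p)}$ rather than the cleaner $(\alpha L)^{2p+O_{\eps,\delta}(1)}$ appearing in the statement; matching the stated exponent exactly requires either weighting the sampling distribution (for example, toward elements of $A$) to redistribute factors of $\alpha$ and $L$ in the key identity, or a more careful accounting of which factors enter through Jensen and which through the definition of $S$. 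Both refinements are tractable variants of the above scheme.
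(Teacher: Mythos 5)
You should note first that the paper does not prove this lemma at all: it is quoted verbatim from Bloom--Sisask \cite{bloom2023kelley} and used as a black box, so your attempt has to be judged as a standalone proof of the quoted statement. Your uniform-sampling argument is correct as far as it goes: the identities $\Pr[b_1\in M_1,\,b_2\in M_2]=(\alpha^2F(b_2-b_1))^t$ and $\E_{b_1\in B_1,b_2\in B_2}g(b_2-b_1)=\E_x\nu(x)g(x)$ are right, and the averaging does produce $M_1,M_2$ with $\langle\mu_{M_1}\circ\mu_{M_2},1_S\rangle\ge1-\delta$ and $\min_i\mu_{B_i}(M_i)\ge\tfrac12\alpha^{2t}L^t$ with $t=p+O_{\eps,\delta}(1)$. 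But this is not the stated bound $(\alpha L)^{2p+O_{\eps,\delta}(1)}$, and the shortfall is not cosmetic. Since $L=\|\mu_A\circ\mu_A\|_{p(\nu)}$ can be (and, in every application in this paper, is) larger than $1$, your bound is smaller than the claimed one by a factor of roughly $L^{-(p+O(1))}$, which can be as small as $\alpha^{p}$: in the extreme case $L\approx\alpha^{-1}$ the lemma asserts $\min_i\mu_{B_i}(M_i)\approx1$, while your argument certifies only about $\alpha^{p}$. Moreover, no rebalancing of your averaging can close this gap, because $\E_{\vec w}[\mu_{B_1}(M_1)\mu_{B_2}(M_2)]=\alpha^{2t}\|F\|_{t(\nu)}^t$ exactly, which in that same extreme case is about $\alpha^{t}$; an expectation of the product over uniformly chosen shifts is provably too small to witness the claimed densities. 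The missing idea is precisely the ``dependent'' part of dependent random choice: one samples $\vec w$ with probability proportional to $|M_1(\vec w)|\,|M_2(\vec w)|$ (size-biasing), and then bounds $\Pr[\mu_{B_1}(M_1)<\tau]\le\tau\,\E_{\mathrm{unif}}[\mu_{B_2}(M_2)]/\E_{\mathrm{unif}}[\mu_{B_1}(M_1)\mu_{B_2}(M_2)]\le\tau/(\alpha L)^t$, and symmetrically, together with a Markov bound on the $S^c$-mass; a union bound then yields a single tuple with \emph{both} individual densities at least a constant times $(\alpha L)^{t}$, which gives the coupled form $(\alpha L)^{2p+O_{\eps,\delta}(1)}$. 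Your ``main obstacle'' paragraph gestures at this (``weighting the sampling distribution'') but does not carry it out, and the accompanying claim that it is a routine refinement understates what is actually the crux of the lemma.

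It is also worth seeing why the precise coupled form $\left(\E[1_A]\cdot\|\mu_A\circ\mu_A\|_{p(\nu)}\right)^{2p+O(1)}$ is load-bearing in this paper. In \Cref{lem:robust_witness_gen} the set $A\subseteq G\times G$ encoding the collection has tiny density $\E_H[1_A]$, and this is rescued only because $\E_H[1_A]$ enters multiplied by the norm, whose conversion factor $\zeta\ge|G|/|C|$ compensates, giving $\left(2^{-d}\|\mu_f\|_{p(\nu)}\right)^{O_{\eps,\delta}(p)}$. Your bound carries an extra uncompensated factor $\E[1_A]^{t}$, which in the Bohr-set application of \Cref{cor:robust_witness_int} costs an additional $\left(\mu(R)\mu(C)\right)^{O(p)}=\exp(-O(d^{12}p))$ in the density of $M_1,M_2$ and would destroy the quantitative conclusions downstream. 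So the gap you flagged is genuine both as a matter of proving the quoted statement and for its use in this paper.
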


As with the previous lemma (\Cref{lem:oddmoments}), we will only apply \Cref{lem:sifting} for constant $\eps$ and $\delta$, so we will not be concerned with the precise dependency on these variables. Observe that since the support of $\mu_{M_1} \circ \mu_{M_2}$ is contained in the support of $\nu$, the conclusion $\langle \mu_{M_1} \circ \mu_{M_2}, 1_S \rangle \geq 1 - \delta$ is unchanged by replacing $S$ with $S\cap L$ for any $L$ containing the support of $\nu$. This observation will be used in the proof of \Cref{lem:robust_witness_gen}.

In the 3AP setting, this lemma is enough to obtain a robust witness to $A * A$ being far from uniform. Since we are working with a collection of sets $\{A_g\}_{g \in G}$, we will encode our collection into a large set $A \subseteq G \times G$ while preserving the relevant properties and apply sifting there. Despite the lemma being stated for a single set, we are able to appropriately encode our collection to apply it.
First, we prove sifting for an arbitrary Abelian group $G$, then we apply it specifically to the case of $\F_q^n$. Later, we will apply this result to $G = \mathbb{Z}/N\mathbb{Z}$ as well. 

\begin{restatable}{lemma}{sifting}\label{lem:robust_witness_gen}
    Let $\eps, \delta \in (0,1)$ and $p \geq 1$ be an integer. Additionally, let $B_1, B_2 \subseteq G$, $\nu = \mu_{B_1} \circ \mu_{B_2}$, and $f = \sum_{g \in C} 1_{A_g} \circ 1_{A_g}$, where $\{A_g\}_{g \in C}$ is a collection of subsets of $G$ indexed by $C$ with $\sum_{g \in C} |A_g| \ge 2^{-{d}}|G||C|$. If
    \[
        S = \{x\in G : \mu_f(x) > (1 - \eps) \| \mu_f \|_{p(\nu)}\},
    \]
    then there are $M_1 \subseteq B_1$ and $M_2 \subseteq B_2$ such that
    \[
        \langle \mu_{M_1} \circ \mu_{M_2}, 1_S \rangle \geq 1 - \delta \quad\text{and}\quad\min(\mu_{B_1}(M_1), \mu_{B_2}(M_2)) \geq \left(2^{-d} \| \mu_{f}\|_{p(\nu)} \right)^{O_{\eps, \delta}(p)}. 
    \]
\end{restatable}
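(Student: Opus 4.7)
The plan is to reduce this multi-set sifting statement to the single-set Lemma \ref{lem:sifting} by lifting the collection $\{A_g\}_{g\in C}$ to one subset of the product group $H := G \times G$. Define
$$A := \{(g, y) \in H : g \in C,\ y \in A_g\}, \quad B_i' := \{0\} \times B_i \subseteq H, \quad \nu' := \mu_{B_1'} \circ \mu_{B_2'},$$
so that $|A| = \sum_{g\in C} |A_g|$, each $B_i'$ sits inside the slice $\{0\}\times G$, and $\nu'$ is a probability measure on $H$.

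The two key identities come from expanding the convolutions on the slice $\{0\}\times G$. Since the first-coordinate constraint $g = g+0$ forces the same index $g \in C$ on both sides, direct calculation yields
$$(1_A \circ 1_A)(0, y) = \frac{1}{|G|}\,f(y) \quad \text{and} \quad \nu'(0, y) = |G|\,\nu(y),$$
and both convolutions vanish off this slice. Writing $\alpha := \E_H[1_A] = \sum_g |A_g|/|G|^2$ and $\beta := \E_G[f] = \sum_{g \in C} (|A_g|/|G|)^2$, this gives $(\mu_A \circ \mu_A)(0, y) = \frac{\beta}{\alpha^2 |G|}\,\mu_f(y)$ and, after integrating against $\nu'$,
$$\|\mu_A \circ \mu_A\|_{p(\nu')} = \frac{\beta}{\alpha^2 |G|}\,\|\mu_f\|_{p(\nu)}.$$
In particular, the threshold set $S_H := \{z \in H : (\mu_A \circ \mu_A)(z) > (1-\eps)\|\mu_A \circ \mu_A\|_{p(\nu')}\}$ meets the slice $\{0\}\times G$ in exactly $\{0\}\times S$.

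Now apply Lemma \ref{lem:sifting} in the Abelian group $H$ with inputs $A, B_1', B_2'$, producing $M_i' \subseteq B_i'$. Since $B_i' \subseteq \{0\}\times G$, automatically $M_i' = \{0\}\times M_i$ for some $M_i \subseteq B_i$, and $\mu_{B_i'}(M_i') = \mu_{B_i}(M_i)$. The convolution $\mu_{M_1'} \circ \mu_{M_2'}$ is likewise supported on the slice, where it equals $|G|\,(\mu_{M_1} \circ \mu_{M_2})$, so the inner-product conclusion of Lemma \ref{lem:sifting} transports cleanly to
$$\langle \mu_{M_1'} \circ \mu_{M_2'},\, 1_{S_H}\rangle_H = \langle \mu_{M_1} \circ \mu_{M_2},\, 1_S\rangle_G \ge 1 - \delta.$$

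For the density bound, Lemma \ref{lem:sifting} gives
$$\mu_{B_i}(M_i) \ge \bigl(\alpha \cdot \|\mu_A \circ \mu_A\|_{p(\nu')}\bigr)^{2p + O_{\eps,\delta}(1)} = \bigl(\tfrac{\beta}{\alpha |G|}\,\|\mu_f\|_{p(\nu)}\bigr)^{O_{\eps,\delta}(p)}.$$
Cauchy--Schwarz yields $\sum_g |A_g|^2 \ge (\sum_g |A_g|)^2/|C|$, so
$$\frac{\beta}{\alpha |G|} = \frac{\sum_g |A_g|^2}{|G|\sum_g |A_g|} \ge \frac{\sum_g |A_g|}{|G|\,|C|} \ge 2^{-d}$$
by hypothesis, which completes the target bound $(2^{-d}\|\mu_f\|_{p(\nu)})^{O_{\eps,\delta}(p)}$. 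I expect the main obstacle to be careful bookkeeping: tracking which factors of $|G|$ arise when passing between $G$ and $H$, and verifying that the $L^1$ density hypothesis on $\sum_g |A_g|$ converts correctly into the second-moment quantity $\beta$ that governs the sifting threshold.
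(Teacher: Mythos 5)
Your proposal is correct and takes essentially the same route as the paper: embed the collection as $A \subseteq G \times G$, apply \Cref{lem:sifting} in the product group with $B_i' = \{0\} \times B_i$, transfer the threshold set and inner product back via the scaling identities on the slice, and obtain the $2^{-d}$ factor from Cauchy--Schwarz together with the density hypothesis. The only (inessential) slip is the aside that $1_A \circ 1_A$ vanishes off the slice $\{0\}\times G$ --- it does not in general --- but your argument never actually needs this, since only the supports of $\nu'$ and of $\mu_{M_1'} \circ \mu_{M_2'}$ matter.
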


\begin{proof}
    Let $H = G \times G, L = \{0\} \times G$, $B_1' = \{0\} \times B_1$ and $B_2' = \{0\} \times B_2$. Additionally, let $\pi = \mu_{B_1'} \circ \mu_{B_2'}$, and note that $\pi$ is supported on $L$. We embed our collection into a single set $A = \{ (g,y) \mid g \in C, y \in A_g \}$ in $H$, and define
    \[
        S = \{x\in L : (\mu_A \circ \mu_A)(x) > (1-\eps) \|\mu_A \circ \mu_A\|_{p(\pi)}\}.
    \]
    Recall that it suffices to define $S$ for $x\in L$ (as opposed to $H$), since $L$ contains the support of $\pi$. Applying \Cref{lem:sifting}, we obtain $M_1 \subseteq B'_1$ and $M_2\subseteq B'_2$, such that
    $$
        \langle \mu_{M_1} \circ \mu_{M_2}, 1_S \rangle_{H} \geq 1 - \delta  \quad\text{and}\quad\min \left(\mu_{B'_1}(M_1), \mu_{B'_2}(M_2) \right) \ge \left( \E[1_A] \cdot \| \mu_A \circ \mu_A \|_{p(\pi)} \right)^{O_{\eps, \delta}(p)}.
    $$    
    Observe that for $x = (0,y) \in L$, we have
    \[
        (1_A \circ 1_A)(x) = \mathop{\E}_{z \in H} 1_A(z)1_A(x+z) 
        = \frac{1}{|G|^2}\sum_{g\in C} \sum_{z\in G}1_{A_{g}}(z)1_{A_{g}}(y + z)
        = \frac{1}{|G|}\sum_{g\in C} (1_{A_g}\circ 1_{A_g})(y)=\frac{1}{|G|}f(y).
    \]
    In particular, $(\mu_A \circ \mu_A)(x) = \zeta \cdot \mu_{f}(y)$, where
    \begin{equation}\label{eq:zeta_bound_int}
        \zeta = \frac{\E_G[f]}{\E_H[1_A]^{2}\cdot|G|} = \frac{|G|\cdot \sum_{g\in C}|A_g|^2}{|\sum_{g\in C}A_g|^2} \ge \frac{|G|}{|C|},
    \end{equation}
    with the final inequality by Cauchy-Schwarz. Similarly, we have
    \[
        \pi(x) = (\mu_{B_1'} \circ \mu_{B_2'})(x) = \frac{|H|^2}{|B_1'||B_2'|} \cdot (1_{B_1'} \circ 1_{B_2'})(x) = \frac{|G|^4}{|B_1||B_2|} \cdot \frac{1}{|G|}(1_{B_1} \circ 1_{B_2})(y) = |G|(\mu_{B_1}\circ\mu_{B_2})(y) = |G|\nu(y).
    \]
    Recalling that $L$ contains the support of $\pi$, we can rewrite $S$ as $\left\{(0,y) \in L : \mu_f(y) > (1-\eps)\|\mu_f\|_{p(\nu)}\right\}$.

    We would like to verify the inner product over $G$ satisfies the conclusion of the theorem. Let $\alpha_1 = |M_1|/|H|$ and $\alpha_2 = |M_2|/|H|$. Unraveling the definition of $\langle \mu_{M_1} \circ \mu_{M_2}, 1_S \rangle$, we have
    \begin{align*}
        \E_{x \in H} (\mu_{M_1} \circ \mu_{M_2})(x) 1_S(x) &= \frac{1}{|H|} \sum_{x \in H} \left(\frac{1}{|H|} \sum_{y \in H} \mu_{M_1}(y) \mu_{M_2}(x+y) \right)1_S(x) \\
        &= \frac{1}{|G|^2} \sum_{x \in H} \left(\frac{1}{|G|^2} \sum_{y \in H} \frac{1_{M_1}(y)}{\alpha_1} \cdot \frac{1_{M_2}(x+y)}{\alpha_2} \right)1_S(x) \\ 
        &= \frac{1}{|G|^2} \sum_{x \in L}  \left(\sum_{y \in L} \frac{1_{M_1}(y)}{\alpha_1 |G|} \cdot \frac{1_{M_2}(x+y)}{\alpha_2|G|}\right)1_S(x) \tag{since $M_1,M_2 \subseteq L$} \\
        &= \frac{1}{|G|} \sum_{x \in L}  \left(\frac{1}{|G|} \sum_{y \in L} \frac{1_{M_1}(y)}{\E_L[M_1]} \cdot \frac{1_{M_2}(x+y)}{\E_L[M_2]}\right)1_S(x).
    \end{align*}
    Since $L \cong G$, we can interpret $M_1,M_2,$ and $S$ as subsets of $G$ to obtain $\langle \mu_{M_1} \circ \mu_{M_2}, 1_S \rangle_G \geq 1 - \delta$. 
    Similarly, we can interpret $M_1 \subseteq B_1$ and $M_2 \subseteq B_2$. We conclude by using \Cref{eq:zeta_bound_int} to
    deduce 
    \[
        \min \left(\mu_{B_1}(M_1), \mu_{B_2}(M_2) \right) \ge \left(\E_H[1_A]\cdot \zeta \cdot \| \mu_f \|_{p(\nu)} \right)^{O_{\eps, \delta}(p)} \ge \left(2^{-d} \| \mu_f\|_{p(\nu)} \right)^{O_{\eps, \delta}(p)}. \tag*{\qedhere}
    \]
\end{proof} 

For the case where $G =\F_q^n$, we will not need the same level of generality.
\begin{corollary}\label{cor:robust_witness_ff}
    Let $\eps \in (0,1)$ and $p \geq 1$ be an integer. Additionally, let $f = \sum_{g \in \F_q^n} 1_{A_g} \circ 1_{A_g}$ where $\{A_g\}_{g \in \F_q^n}$ is a collection of subsets of $\F_q^n$ with $\sum_{g \in \F_q^n} |A_g| \geq 2^{-{d}}|\F_q^n|^2$. If $\| \mu_{f} \|_p \geq 1 + \eps$ and $S = \{x : \mu_{f}(x) > 1 + \eps / 2 \}$, then there are $M_1, M_2 \subseteq \F_q^n$ both of density at least $2^{-O_{\eps}(dp)}$ such that
    $$
        \langle \mu_{M_1} \circ \mu_{M_2}, 1_S \rangle \geq 1 - \frac{\eps}{8} .
    $$
\end{corollary}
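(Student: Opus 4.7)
The plan is to derive this corollary as a direct specialization of \Cref{lem:robust_witness_gen}, instantiated with $B_1 = B_2 = G = \F_q^n$ and $C = \F_q^n$. Under these choices, $\nu = \mu_{\F_q^n} \circ \mu_{\F_q^n}$ collapses to the uniform measure on $\F_q^n$ (the constant function $1$), so the weighted norms $\|\cdot\|_{p(\nu)}$ appearing in the lemma coincide with the ordinary $L^p$-norms appearing in the corollary. The density hypothesis $\sum_g |A_g| \geq 2^{-d}|\F_q^n|^2$ matches the lemma's hypothesis $\sum_{g\in C}|A_g| \geq 2^{-d}|G||C|$ since $|G||C| = |\F_q^n|^2$, and $\mu_{B_i}(M_i)$ reduces to the density of $M_i$ in $\F_q^n$.

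The only real work is reconciling the two definitions of the ``large deviation'' set. The general lemma supplies a robust witness for $S' = \{x : \mu_f(x) > (1-\eps')\|\mu_f\|_p\}$, whereas the corollary concerns $S = \{x : \mu_f(x) > 1 + \eps/2\}$. I would invoke \Cref{lem:robust_witness_gen} with the lemma's parameter $\eps' \coloneqq \eps/4$ and leverage the hypothesis $\|\mu_f\|_p \ge 1+\eps$: a short calculation gives $(1-\eps/4)(1+\eps) \geq 1 + \eps/2$ for $\eps \in (0,1)$, so $S' \subseteq S$. Since $\mu_{M_1} \circ \mu_{M_2} \geq 0$ and $1_{S'} \leq 1_S$, the correlation bound transfers from $S'$ to $S$ without loss. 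Setting the lemma's $\delta \coloneqq \eps/8$ then yields $\langle \mu_{M_1} \circ \mu_{M_2}, 1_S \rangle \geq 1 - \eps/8$, as desired.

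For the density lower bound, \Cref{lem:robust_witness_gen} guarantees $\min(\mu_{B_1}(M_1), \mu_{B_2}(M_2)) \geq (2^{-d}\|\mu_f\|_p)^{O_{\eps', \delta}(p)}$. Since $\mu_f$ is a non-negative function normalized so that $\E[\mu_f] = 1$, we have $\|\mu_f\|_p \geq \|\mu_f\|_1 = 1$, and hence this lower bound is at least $2^{-O_\eps(dp)}$, matching the claim. I do not foresee a substantive obstacle here; the step is essentially a routine specialization plus parameter tuning, with the heavy lifting already absorbed into \Cref{lem:robust_witness_gen}.
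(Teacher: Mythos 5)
Your proposal is correct and matches the paper's proof, which is exactly the one-line application of \Cref{lem:robust_witness_gen} with $C, B_1, B_2 \coloneqq \F_q^n$, $\eps \coloneqq \eps/4$, and $\delta \coloneqq \eps/8$; you simply spell out the containment of deviation sets and the norm lower bound that the paper leaves implicit.
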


\begin{proof}
    Apply \Cref{lem:robust_witness_gen} with $C, B_1, B_2 \coloneqq \F_q^n$, $\eps \coloneqq \eps/4$, and $\delta \coloneqq \eps/8$.
\end{proof}

\subsubsection{Almost periodicity}\label{subsec:almostPeriod}

At this point, we have obtained a robust witness to the non-uniformity of $F'$. Namely, we have two dense sets $M_1, M_2$ whose convolution $M_1 \ast M_2$ correlates well with the points where $F'$ has upwards deviations. We will want to convert our robust witness into a subspace $V$ on which our collection $\{A_g\}_{g \in \F_q^n}$ admits a collective density increment. Kelley and Meka \cite{kelley2023strong} originally used a result of Sanders \cite{sanders2012bogolyubov}; we will follow \cite{bloom2023kelley} in using the following theorem to find a large subspace $V$ which correlates well with the upwards deviations of $F'$ via the robust witness we found in the previous step. 

\begin{theorem}[{\cite[Theorem 3.2]{schoen2016roth}}]\label{thm:periodicity_ff}
    Let $\eps \in (0,1)$. If $M_1, M_2, S \subseteq \F_q^n$ are such that $M_1$ and $M_2$ both have density at least $2^{-d}$, then there is a subspace $V$ of co-dimension $O_{\eps}(d^4)$ such that
    $$
        |\langle \mu_V \ast \mu_{M_1} \ast \mu_{M_2}, 1_S \rangle - 
        \langle \mu_{M_1} \ast \mu_{M_2}, 1_S \rangle| \leq \eps.
    $$
\end{theorem}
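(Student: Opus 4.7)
The plan is a standard Croot--Sisask almost-periodicity argument followed by Sanders' quantitative Bogolyubov theorem in $\F_q^n$, which in this setting upgrades an iterated sumset of a dense set to an actual subspace. Define
$$F(t) \;=\; \bigl\langle (\mu_{M_1}\ast\mu_{M_2})^{t},\; 1_S \bigr\rangle, \qquad h^t(x) = h(x-t),$$
so that $F(0) = \langle \mu_{M_1}\ast\mu_{M_2}, 1_S\rangle$, and, by expanding $\mu_V \ast h$ as an average over $v\in V$ of $h^v$, $\langle \mu_V\ast\mu_{M_1}\ast\mu_{M_2}, 1_S\rangle = \E_{v\in V} F(v)$. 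The theorem thus reduces to producing a subspace $V$ of co-dimension $O_\eps(d^4)$ on which $F(v)$ is within $\eps$ of $F(0)$ on average.

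The first step is Croot--Sisask almost periodicity: applied to $M_1$ with a suitable test function built from $M_2$ and $S$, it yields a subset $T \subseteq M_1$ of density at least $2^{-O_\eps(d)}$ such that
$$|F(t) - F(0)| \;\le\; \eps/4 \quad\text{for every } t \in T - T.$$
Concretely, one obtains an $L^p$-closeness estimate $\|(\mu_{M_1}\ast\mu_{M_2})^t - \mu_{M_1}\ast\mu_{M_2}\|_p \le \eta$ for $t \in T-T$, and pairs it against $1_S$ using H\"older's inequality together with $\|1_S\|_{p/(p-1)} \le 1$ to get the pointwise bound for $F$. The crucial technical point is to apply Croot--Sisask to $L^\infty$-bounded convolutions (for instance involving $1_{M_2}$ and $1_S$ rather than their normalized versions), so that $p$ and $\eta$ depend only on $\eps$ and $|T|/|\F_q^n|$ stays at $2^{-O_\eps(d)}$.

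The second step is Sanders' quantitative Bogolyubov theorem in $\F_q^n$: any set of density $\gamma$ has $2T - 2T$ containing a subspace of co-dimension $O(\log^4(1/\gamma))$. With $\gamma = 2^{-O_\eps(d)}$, this produces the desired $V \subseteq 2T - 2T$ of co-dimension $O_\eps(d^4)$. Every $v \in V$ decomposes as $v = (t_1 - t_2) + (t_3 - t_4)$ with $t_i \in T$, so two applications of the triangle inequality via Step 1 give $|F(v) - F(0)| \le \eps/2$ for every $v \in V$. Averaging over $v \in V$ completes the argument.

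The main obstacle is Step 1: a naive application of Croot--Sisask, say to $\mu_{M_1}$ with test function $\mu_{M_2}$, introduces a factor of $\|\mu_{M_2}\|_p = 2^{\Omega(d(p-1)/p)}$ into the $L^p$ error, forcing $\eta$ — and hence the density of $T$ — to degrade doubly exponentially in $d$. Getting the clean $O_\eps(d^4)$ co-dimension hinges on setting up the Croot--Sisask step so that only $L^\infty$-bounded functions appear on the right-hand side, so that all constants collapse into $O_\eps(1)$ factors and Sanders' Bogolyubov bound can be applied directly to $T$.
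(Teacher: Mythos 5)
First, note that the paper does not prove this statement at all: it is imported as a black box from Schoen--Sisask \cite{schoen2016roth}, so there is no internal proof to compare against. Your outline is essentially the standard proof behind that citation (Croot--Sisask almost periodicity followed by Sanders' Bogolyubov--Ruzsa theorem in $\F_q^n$), and the second half is fine: a subspace of co-dimension $O(\log^4(1/\gamma))$ inside $2T-2T$, the triangle inequality for shifts $v=(t_1-t_2)+(t_3-t_4)$ (valid since translation is an isometry for these norms), and averaging over $V$ all work as you say.

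The genuine flaw is exactly the step you flag as the ``crucial technical point,'' whose proposed resolution is wrong. The quantity to control is $\langle (\mu_{M_1}\ast\mu_{M_2})^t - \mu_{M_1}\ast\mu_{M_2},\, 1_S\rangle$, and the normalization $\mu(M_2)^{-1}=2^{d}$ cannot be evaded by working with $1_{M_2}$ in place of $\mu_{M_2}$: if you run Croot--Sisask on an $L^\infty$-bounded convolution such as $\mu_{M_1}\ast 1_{M_2}$ with error $\eta$ and then pair against $1_S$ via $\|1_S\|_{p/(p-1)}\le 1$, the factor $\mu(M_2)^{-1}=2^d$ reappears when you convert back to $\mu_{M_2}$, forcing $\eta\approx\eps 2^{-d}$ and hence $|T|/|M_1|=2^{-O(p\,2^{2d}/\eps^2)}$, which is far too small for Step 2; while applying Croot--Sisask directly with test function $\mu_{M_2}$ costs $\|\mu_{M_2}\|_p=2^{d(1-1/p)}$, as you note. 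So the claim that one can arrange ``$p$ and $\eta$ depending only on $\eps$'' with the pairing against $1_S$ is not achievable. The correct setup inverts your pairing: take the bounded function $1_S$ as the Croot--Sisask test function, obtaining $\|(\mu_{M_1}\circ 1_S)^t-\mu_{M_1}\circ 1_S\|_p\le\eps'$ for all $t\in T-T$, and then pair against $\mu_{M_2}$ (not $1_S$) in the dual norm, using $\|\mu_{M_2}\|_{p/(p-1)}=\mu(M_2)^{-1/p}\le 2^{d/p}\le 2$ once $p\ge d$. Thus $p$ must grow linearly in $d$ rather than being $O_\eps(1)$; the density of $T$ is then still $2^{-O_\eps(d)}$, so your Step 2 delivers co-dimension $O_\eps(d^4)$ and the final bound is unaffected --- but Step 1 as written does not go through without this repair.
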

\begin{corollary}\label{cor:periodicity_ff}
    Let $\eps \in (0,1)$, and let $M_1, M_2, S \subseteq \F_q^n$ be such that $M_1$ and $M_2$ both have density at least $2^{-d}$. If 
    \[
        \langle \mu_{M_1} \circ \mu_{M_2}, 1_S \rangle \geq 1 - \frac{\eps}{2},
    \]
    then there is a subspace $V$ of co-dimension $O_{\eps}(d^4)$ such that
    \[
        \langle \mu_V \ast \mu_{M_1} \circ \mu_{M_2}, 1_S \rangle \geq 1 - \eps.
    \]
\end{corollary}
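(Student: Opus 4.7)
The plan is to deduce the corollary almost immediately from \Cref{thm:periodicity_ff}, with the only subtle point being the bookkeeping between ordinary convolution $\ast$ (which appears in the theorem) and difference convolution $\circ$ (which appears in the corollary). The key observation is that if we set $\widetilde{M_1} = -M_1$, then $(\mu_{M_1}\circ \mu_{M_2})(x) = \E_y \mu_{M_1}(y)\mu_{M_2}(x+y) = \E_y \mu_{\widetilde{M_1}}(-y)\mu_{M_2}(x+y) = (\mu_{\widetilde{M_1}}\ast \mu_{M_2})(x)$, and $\widetilde{M_1}$ has the same density as $M_1$, namely at least $2^{-d}$. Hence the hypothesis rewrites as $\langle \mu_{\widetilde{M_1}}\ast \mu_{M_2}, 1_S\rangle \ge 1 - \eps/2$.

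Next, I would apply \Cref{thm:periodicity_ff} to the triple $\widetilde{M_1}, M_2, S$ with tolerance parameter $\eps/2$ rather than $\eps$. This yields a subspace $V \subseteq \F_q^n$ of co-dimension $O_{\eps}(d^4)$ such that
\[
    \bigl|\langle \mu_V \ast \mu_{\widetilde{M_1}} \ast \mu_{M_2}, 1_S\rangle - \langle \mu_{\widetilde{M_1}}\ast \mu_{M_2}, 1_S\rangle\bigr| \le \eps/2.
\]
Translating back using $\mu_{\widetilde{M_1}}\ast \mu_{M_2} = \mu_{M_1}\circ \mu_{M_2}$, the triangle inequality combined with the hypothesis gives
\[
    \langle \mu_V \ast \mu_{M_1}\circ \mu_{M_2}, 1_S\rangle \ge \langle \mu_{M_1}\circ \mu_{M_2}, 1_S\rangle - \eps/2 \ge 1 - \eps,
\]
which is the desired conclusion.

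There is effectively no obstacle to this argument — the entire content lies in \Cref{thm:periodicity_ff}. The only care required is to verify that convolving against $\mu_V$ on the $\ast$ side matches convolving against $\mu_V$ on the $\circ$ side, which is immediate because subspaces are symmetric (so $\mu_V = \mu_{-V}$), and that swapping $M_1 \leftrightarrow -M_1$ preserves both density and the density hypothesis of the theorem. Splitting the tolerance as $\eps/2 + \eps/2$ to absorb both the approximation error and the slack in the hypothesis is the sole quantitative choice.
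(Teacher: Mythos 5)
Your proof is correct and matches the paper's one-line argument: apply \Cref{thm:periodicity_ff} with tolerance $\eps/2$ after negating one of the two sets to convert $\circ$ into $\ast$. The only (immaterial) difference is that you negate $M_1$ where the paper negates $M_2$; your choice in fact gives the identity $\mu_{-M_1}\ast\mu_{M_2}=\mu_{M_1}\circ\mu_{M_2}$ exactly, with no reflection bookkeeping.
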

\begin{proof}
    Apply \Cref{thm:periodicity_ff} with $\eps \coloneqq \eps/2, M_1 \coloneqq M_2, M_2 \coloneqq -M_1$, and $S \coloneqq S$. 
\end{proof}

\subsubsection{Combining steps}\label{subsec:combining}
All that remains to prove \Cref{thm:strucvspseudo_ff} (restated below) is to combine the prior steps.
\strVSpsdFF*
\begin{proof}
    If (1) does not hold, we apply \Cref{lem:nonuniform_ff} to arrive at $\|\mu_F - 1 \|_p \geq \eps/2$ for integer $p = O(d)$. \Cref{cor:unbalancing_ff} implies $\| \mu_{F'} \|_{p'} \geq 1 + \eps/4$ for some integer $p' = O_{\eps}(p)$. Let $S = \{x \in \F_q^n : \mu_{F'}(x) > 1 + \eps / 8\}$. We apply \Cref{cor:robust_witness_ff} to obtain $M_1, M_2 \subseteq \F_q^n$ both of density 
    at least $2^{-O_{\eps}(d^2)}$
    such that
    $$
        \langle \mu_{M_1} \circ \mu_{M_2}, 1_S \rangle \geq 1 - \frac{\eps}{32}.
    $$
    We then apply \Cref{cor:periodicity_ff} with $\eps \coloneqq \eps/16$ and $d\coloneqq O_{\eps}(d^2)$ to find a subspace $V$ with co-dimension at most $O_\eps(d^8)$ such that
    $$
        \langle \mu_V \ast \mu_{M_1} \circ \mu_{M_2}, 1_S \rangle \geq 1 - \frac{\eps}{16}.
    $$
    By the definition of $S$, we have
    \[
        1 + \frac{\eps}{32} \leq \left(1 + \frac{\eps}{8}\right)\left(1-\frac{\eps}{16}\right) 
        \leq \langle \mu_V \ast \mu_{M_1} \circ \mu_{M_2}, \mu_{F'} \rangle
        \leq \| \mu_{F'} \ast \mu_V\|_{\infty} \|\mu_{M_1} \circ \mu_{M_2}\|_1
        = \|\mu_{F'} \ast \mu_V\|_{\infty}. \tag*{\qedhere}
    \]
\end{proof}

\section{The integer case}
In this section, we prove the following theorem.
\begin{theorem}\label{thm:corner_bounds_Abelian}
    Let $G$ be a finite Abelian group. Any skew-corner-free subset of $G \times G$ has size at most
    \[
        \exp\left\{-\Omega\left(\log^{1/12}|G|\right)\right\}\cdot |G|^2.
    \]
\end{theorem}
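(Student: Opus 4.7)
The plan is to mirror the five-step structure of \Cref{sec:finite_field}, replacing subspaces with regular Bohr sets throughout. Starting with a skew corner-free $A \subseteq G \times G$ of density $2^{-d}$, Cauchy-Schwarz on the column collection $\{A_g\}_{g \in G}$ gives $\sum_g |A_g|^2 \ge 2^{-2d}|G|^3$. The main work is to prove Bohr set analogs of \Cref{lem:spreadness_ff} (density increment to simultaneous spreadness) and \Cref{thm:strucvspseudo_ff} (structure versus pseudorandomness), then combine them via an outer iteration.

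The density-increment analog of \Cref{lem:spreadness_ff} is routine. Given a column collection with large $L^2$-mass inside a regular Bohr set $B$, either the collection is $(r,\delta,1+\eps)$-simultaneously spread in $B$ in the sense of \Cref{def:sim_spread_int}, or there exist a sub-Bohr set $B' \le B$ of rank at most $\rk(B)+r$ (whose regularity is ensured by \Cref{lem:regular_dilation}) and shifts so that, after passing to the restricted and shifted collection inside $B'$, the sum of squared relative densities grows by a factor of $1+\eps$. Termination in $O(d/\eps)$ rounds follows exactly as in the finite field case, now measuring densities with respect to $\mu(B)$ rather than $|\F_q^n|^{-1}$.

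For the structure-versus-pseudorandomness analog, the non-uniformity and unbalancing steps (\Cref{subsec:non-uniformity}--\Cref{subsec:unbalancing}) transfer essentially verbatim since they are purely functional-analytic. Sifting is handled by \Cref{lem:robust_witness_gen}, which is already stated for general $G$: one takes $B_1, B_2$ to be regularity-selected dilates of $B$, so that $\nu = \mu_{B_1} \circ \mu_{B_2}$ concentrates on a small dilate of $B$ and \Cref{cor:approxbohr} allows passage between $L^p(\nu)$ and $L^p(\mu_B)$ bounds on $\mu_{F'}$. Almost periodicity requires a Bohr set analog of \Cref{thm:periodicity_ff} (as in \cite{schoen2016roth}), producing a regular sub-Bohr set of rank growth $O_\eps(d^4)$ rather than a subspace. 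Combining these yields the dichotomy: either $\langle \mu_F, \mu_D\rangle$ is close to $1$ (producing many skew corners via \Cref{lem:normalized_corners_count}), or there exists a regular $B' \le B$ of rank growth $O_\eps(d^8)$ with $\|\mu_{F'} \ast \mu_{B'}\|_\infty \ge (1+\Omega(\eps))\,\mu(B)^{-1}$, which by \Cref{lem:inftospread} contradicts simultaneous spreadness.

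The main obstacle, and the source of the weaker $1/12$ exponent relative to the finite field $1/9$, lies in the outer iteration. Each Bohr set refinement incurs a multiplicative loss of order $(\rho/4)^r$ in $\mu(B)$ via \Cref{lem:bohrsize}, where the regularity-chosen width $\rho$ must be taken inverse-polynomially in $r$ and $d$ in order to apply \Cref{lem:boundingWithDilation} and \Cref{cor:approxbohr}. Carefully tracking the joint evolution of rank ($O_\eps(d^8)$ growth per pass) and width contraction over the $O(d)$ iterations forced by the density-increment scheme, and requiring that $\mu(B)\cdot|G| \ge 1$ throughout so that $B$ remains nonempty, one arrives at $d \le O(\log^{1/12}|G|)$ and thus the claimed bound.
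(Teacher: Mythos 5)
Your high-level route coincides with the paper's: one pass of a Bohr-set density increment to simultaneous spreadness, one application of a structure-versus-pseudorandomness dichotomy (non-uniformity, unbalancing, sifting via \Cref{lem:robust_witness_gen}, Schoen--Sisask almost periodicity), and a contradiction with spreadness via \Cref{lem:inftospread}, with the exponent $1/12$ coming from the rank/density bookkeeping. The genuine gap is your claim that the non-uniformity and unbalancing steps ``transfer essentially verbatim since they are purely functional-analytic.'' They do not, and this is precisely where the integer case requires new work. The finite-field unbalancing rests on two facts that fail here: the decoupling identity (\Cref{lem:decoupling_ff}) is a global Fourier identity for $L^p$ norms with respect to the uniform measure on $G$, and \Cref{lem:oddmoments} requires the reference measure to be spectrally non-negative, whereas $\mu_C$ (or $\mu_B$) for a Bohr set is neither the uniform measure nor spectrally non-negative. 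The paper replaces both: \Cref{lem:settoconvolve} removes the shifts at the cost of passing to the spectrally non-negative measure $\mu_{C'}\circ\mu_{C'}\ast\mu_{C''}\circ\mu_{C''}$ supported on a narrow dilate (using \Cref{lem:boundingWithDilation}), only then applies the odd-moment argument (\Cref{thm:usingoddmoments}), and finally averages over shifts to land on the measure $\mu_{C'}\ast\mu_{C''-x}$ that sifting needs. Your substitute---prove the bound in $L^p(\mu_B)$ and convert to $L^p(\mu_{B_1}\circ\mu_{B_2})$ via \Cref{cor:approxbohr}---does not work as stated: that corollary approximates first moments with an $\|f\|_\infty$ loss, so converting $p$-th moments of $\mu_{F'}$ (whose sup norm is of order $2^d\mu(R)^{-1}$) incurs errors of order $\rho r\,(2^d\mu(R)^{-1})^p$, which is not negligible for widths that are merely ``inverse-polynomial in $r$ and $d$''; indeed all the widths in the paper carry a factor $2^{-d}$, not just $\mathrm{poly}(r,d)^{-1}$.

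A second omission concerns the index set. Over general $G$ the non-uniformity step cannot compare $\mu_F$ with a constant; one must subtract the shifted means $\alpha_g\mu_R$, $\alpha_g\mu_{R+g}$ and show cross terms such as $\frac{1}{\alpha}\sum_g\alpha_g\langle 1_{A_g}\circ\mu_{R+g},\mu_D\rangle$ are within $O(\eps)\mu(R)^{-1}$ of $\mu(R)^{-1}$ (\Cref{lem:nonuniform}). This forces $D$, hence the index set of the collection, to be supported on a dilate $R_\sigma$ with $\sigma\lesssim \eps/(r2^d)$. Consequently the density-increment lemma must be engineered to output, and maintain through all $O(d/\eps)$ rounds, an index set that is a translate of a narrow dilate of the ambient Bohr set, $C=R_\rho+w$ (\Cref{lem:simul-spread}); the finite-field averaging over cosets of $V$ has no direct analogue here and is replaced by averaging over a narrow dilate $B'_{\sigma'}$ controlled by \Cref{cor:approxbohr}. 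Your increment step restricts only the range sets inside $B'$ and never addresses how the collection is re-indexed, so the later steps have nothing to hang on. Two smaller points: no outer iteration between the two components is needed (the structured alternative of the dichotomy contradicts spreadness outright, so it is invoked once, and the only iteration lives inside the spreadness lemma), and the concluding inequality should be that skew corner-free sets force $d\ge\Omega(\log^{1/12}|G|)$, i.e.\ the machinery produces a nontrivial skew corner whenever $d$ is below that threshold.
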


Note that \Cref{thm:corner_bounds_Abelian} immediately implies \Cref{thm:corners_bounds_int} (restated below), since embedding $A \subseteq [n] \times [n]$ in $\Z/N\Z \times \Z/N\Z$ for $N=2n$ does not change the number of skew corners or (asymptotic) density, 

\mainint*

All the main ideas of \Cref{thm:corner_bounds_Abelian}'s proof are the same as those in \Cref{sec:finite_field}, although some additional technicalities arise from replacing subspaces with regular Bohr sets. (It may be helpful to revisit \Cref{sec:bohr}.) For the remainder of the discussion, we will assume we are working with $G = \mathbb{Z}/N\mathbb{Z}$, but everything we are doing works over any finite Abelian group.

We will aim to replicate what we did in the finite field case when working with $\mathbb{Z}/N\mathbb{Z}$. The main issue is that there are potentially no nontrivial subgroups to work with over $\mathbb{Z}/N\mathbb{Z}$. This ends up not being so much of an issue, since we can replace subgroups with regular Bohr sets, which approximate subgroups in many important ways. And, not unlike the case over $\F_q^n$, we will be able to argue that if a collection $\{A_g\}_{g \in G}$ contains very few skew corners, then there must be some collective density increment on a Bohr set $B$. 

In the finite field case, when we found a subspace $V$ with a collective density increment, we could restrict our attention to $V$ since $V \cong \F_q^{\dim(V)}$. Here, we will not have this luxury, since Bohr sets are not necessarily subgroups! We will have to maintain our ambient group as $\mathbb{Z}/N\mathbb{Z}$, all while restricting our sets $\{A_g\}_{g \in G}$ to smaller and smaller Bohr sets. Thus, we will work with functions defined over $\mathbb{Z}/N\mathbb{Z}$. The supports of these functions, however, will only be contained in some Bohr set $R$. So, when taking norms and inner products, we will want to zoom in on the action by taking everything with respect to a certain measure $\nu$. For the most part, $\nu$ will be some convolution of dilates of $R$, which we will be able to control since Bohr sets behave well under convolutions. Dealing with these technicalities surrounding Bohr sets and non-uniform measures forms the major differences between working over $\F_q^n$ and $\mathbb{Z}/N\mathbb{Z}$, but the overall proof structures are conceptually the same.

\subsection{Obtaining simultaneous spreadness}

As in the finite field case, we will want to perform a density increment argument on the rows and columns of $A \subseteq G \times G$ until we obtain a collection $\{A_g\}_{g \in C}$ of subsets of $R$ for regular Bohr sets $R, C \subseteq G$ which is simultaneously spread. For technical reasons, we will want $C$ to be a translate of a dilate of $R$. This will end up being necessary when we prove \Cref{lem:nonuniform} and \Cref{lem:unbalancing_int}.

Another difference between the Abelian group case and the finite field case comes from our definition of simultaneous spreadness. In particular, if our sets $\{A_g\}_{g \in C}$ are each contained in $R$, then the Bohr set $B$ on which we find a density increment must be a sub-Bohr set of $R$, which to recall we notate by $B \leq R$. The reader will notice in the proof of \Cref{lem:simul-spread} that this becomes necessary when trying to obtain $B_{\rho} \subseteq R_{\rho}$ from $B \leq R$ for some $\rho \in (0,1)$. If we try to start with an assumption of the form $B \subseteq R$, it is not true in general that $B_{\rho} \subseteq R_{\rho}$. 

Overall, the proof idea is conceptually the same as \Cref{lem:spreadness_ff} in the finite field case. The main differences come from ensuring that the index $C$ is a translate of an appropriate dilate of $R$. 

\begin{lemma}\label{lem:simul-spread}
    Let $\eps, \delta \in (0, 1), r \ge 1, d \ge 0$, and $\{A_g\}_{g \in G}$ be a collection of subsets of $G$ with $\sum_{g \in G} |A_g|^2 \geq 2^{-d} |G|^3$. Then there exists a regular Bohr set $R \subseteq G$ of rank $\rk(R) \leq O(d r / \eps)$ and density $\mu(R) \geq \delta^{O(d/\eps)}$, a shift of a regular Bohr set $C = R_{\rho} + w$ where $\rho = \left(\eps^2/rd2^d\right)^{O(d/\eps)}$ and $w\in G$, and shifts $\{x_g\}_{g \in C}$ with $x_g \in G$ such that 
    \begin{enumerate}
        \item The collection $\{ \left(A_g - x_g \right) \cap R \}_{g \in C}$ is $(r, \delta, 1 + \eps)$-simultaneously spread in $R$, and 
        \item $\sum_{g \in C} |\left(A_g - x_g \right) \cap R|^2 \geq 2^{-d} |R|^2 |C|$.
    \end{enumerate}
\end{lemma}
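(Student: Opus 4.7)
The strategy mirrors the finite field case of Lemma 3.2, but with subspaces replaced by regular Bohr sets and with extra work to maintain that the index set $C$ is always a shift of a small dilate of the current ``row'' Bohr set $R$. I will perform an iterative density increment argument. At step $i$ I maintain a regular Bohr set $R_i$ of rank $O(ir)$ and density $\mu(R_i)\ge \delta^{O(i)}$, a radius $\rho_i>0$ that shrinks rapidly with $i$, a shift $w_i\in G$ giving the index set $C_i=(R_i)_{\rho_i}+w_i$, and shifts $\{x_g^{(i)}\}_{g\in C_i}$, such that $A_g^{(i)}:=(A_g-x_g^{(i)})\cap R_i$ satisfies
\[
\sum_{g\in C_i}\bigl(|A_g^{(i)}|/|R_i|\bigr)^2 \;\ge\; (1+\eps)^i\cdot 2^{-d}\cdot |C_i|.
\]
Initially take $R_0=C_0=G$, $\rho_0=1$, and $x_g^{(0)}=0$; the hypothesis of the lemma supplies the base case.

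For the inductive step, if $\{A_g^{(i)}\}_{g\in C_i}$ is already $(r,\delta,1+\eps)$-simultaneously spread in $R_i$, we return. Otherwise Definition 2.14 produces a regular sub-Bohr set $B\le R_i$ of rank at most $\rk(R_i)+r$ and measure $\mu(B)\ge \delta\mu(R_i)$, together with shifts $\{u_g\}_{g\in C_i}$ witnessing a $(1+\eps)$-factor gain in the sum of squared relative densities on translates of $B$. Set $R_{i+1}$ to be a regular dilate of $B$ via Lemma 2.7, and choose $\rho_{i+1}$ to be a small multiple of $\rho_i$ (on the order of $\eps^2\rho_i/(r\cdot 2^d\cdot \rk(R_i))$). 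Write $T=(R_{i+1})_{\rho_{i+1}}$; by Observation 2.12 we have $T\subseteq (R_i)_{\rho_{i+1}}$, so $T+C_i\subseteq (R_i)_{\rho_i+\rho_{i+1}}+w_i$, and for any $g\in (R_i)_{\rho_i-\rho_{i+1}}+w_i$ the set $g-T$ lies entirely in $C_i$. Averaging the quantity $\sum_{g\in T+w}\bigl(|(A_g^{(i)}-u_g)\cap R_{i+1}|/|R_{i+1}|\bigr)^2$ over $w\in C_i$, each $g$ in this ``inner'' shell contributes with multiplicity exactly $|T|$. Using the regularity of $R_i$ to bound $|C_i|-|(R_i)_{\rho_i-\rho_{i+1}}+w_i|$ by $O(\rho_{i+1}\rk(R_i)/\rho_i)\cdot|C_i|$ and the per-term bound of $1$ on squared densities, the contribution of the discarded shell is at most $O(\rho_{i+1}\rk(R_i)/\rho_i)\cdot|C_i|$; our choice of $\rho_{i+1}$ makes this much smaller than $\eps\cdot 2^{-d}|C_i|$. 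Thus some $w_{i+1}\in C_i$ produces the desired $(1+\eps)^{i+1}$-increment on $C_{i+1}=T+w_{i+1}$, with the new shifts $x_g^{(i+1)}=x_g^{(i)}+u_g$.

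Since the density is at most $1$ and grows by $(1+\eps)$ per step, the iteration terminates in $O(d/\eps)$ rounds, giving $\rk(R)=O(dr/\eps)$ and $\mu(R)\ge \delta^{O(d/\eps)}$ as required; the compounded shrinkage of $\rho$ over $O(d/\eps)$ rounds yields $\rho=(\eps^2/rd2^d)^{O(d/\eps)}$. The main obstacle is this averaging step: unlike in the finite field setting, where cosets of $V$ partition $\F_q^n$ and the index set can simply be restricted to $V$ itself, here we must shrink the radius of the index dilate enough that almost all translates $T+w$ with $w\in C_i$ sit entirely inside $C_i$, so the averaging does not leak mass into points where $A_g^{(i)}$ is undefined. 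The $2^d$ factor in $\rho$ is precisely the price paid to ensure the negligible-annulus estimate dominates the per-step density floor. Regularity must also be maintained at each step by invoking Lemma 2.7 after every restriction.
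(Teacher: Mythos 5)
Your proposal is correct and takes essentially the same route as the paper: an iterative density increment in which the rows are restricted to the regular sub-Bohr set witnessing failure of spreadness and the index set to a translate of a narrow regular dilate of it, with per-step dilation on the order of $\eps/(2^d\,\rk)$ and termination after $O(d/\eps)$ rounds; your multiplicity/annulus count is just an inlined version of the paper's appeal to \Cref{cor:approxbohr}. Two small repairs: the witness $B$ from \Cref{def:sim_spread_int} is already regular, so take $R_{i+1}=B$ itself rather than a proper regular dilate (a dilate need not inherit the density increment), and the annulus estimate uses regularity of the index Bohr set $(R_i)_{\rho_i}$ (maintained by choosing each new index dilate regular via \Cref{lem:regular_dilation}), not regularity of $R_i$ itself.
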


\begin{proof}
    We proceed by a similar density increment argument as in the finite field case (\Cref{lem:spreadness_ff}), where regular Bohr sets replace the role of vector spaces.
    
    Let $B$ be a regular Bohr set and $\{A_g\}_{g \in B_{\sigma}+w}$ be a collection of subsets of $B$ indexed by $B_{\sigma}+w$, where initially $B=G$ and $\sigma = 1$. We will assume $w = 0$ for simplicity; the proof is essentially the same for nonzero shifts. If $\{A_g\}_{g \in B_{\sigma}}$ is not $(r,\delta,1+\eps)$-simultaneously spread in $B$, then by definition, there exists a regular Bohr set $B' \leq B$ with rank $\rk(B') \leq \rk(B) + r$ and density $\mu(B') \ge \delta \mu(B)$, and shifts $\{x_g\}_{g \in B_{\sigma}}$ in $G$, such that for $A'_g \coloneqq (A_g  - x_g)\cap B'$ we have
    \begin{equation}\label{eq:not_spread_conc}
        \sum_{g \in B_{\sigma}} \left(\frac{|A'_g|}{|B'|}\right)^2 \geq (1 + \eps) \sum_{g \in B_{\sigma}} \left(\frac{|A_g|}{|B|}\right)^2.
    \end{equation}
    
    From here, the proof proceeds by filtering the index set of our collection to be some translate of a dilate of $B'$. In other words, we will restrict our index set to some translate of $B'_{\sigma'}$ where $\sigma'$ will be chosen later. 
    Consider the function $f(g) \coloneqq (|A'_g|/|B'|)^2$. Then,
    $$
        \frac{1}{|B_{\sigma}|} \sum_{g \in B_{\sigma}} \left(\frac{|A'_{g}|}{|B'|}\right)^2 = \langle \mu_{B_\sigma}, f \rangle \quad \text{ and } \quad\frac{1}{|B_{\sigma}||B'_{\sigma'}|}\sum_{g \in B_{\sigma}}\sum_{h \in B'_{\sigma'}} \left(\frac{|A'_{g + h}|}{|B'|}\right)^2 = \langle \mu_{B_{\sigma}} \ast \mu_{B'_{\sigma'}}, f \rangle.   
    $$
    Note that $\langle \mu_{B_\sigma}, f \rangle \geq 2^{-d}$, since the density of our collection has only increased from the initial assumption that $\sum_{g \in G} |A_g|^2 \geq 2^{-d}|G|^3$.
    
    We want to apply \Cref{cor:approxbohr} to $B_{\sigma}$ and $B'_{\sigma'}$.
    Set $\sigma' = c \sigma \tau$ where we will choose $\tau$ later, and $c \in [1/2,1]$ according to \Cref{lem:regular_dilation} so that $B'_{\sigma'}$ is regular. Since $B' \leq B$, \Cref{obs:subbohrdil} implies the inclusion $B'_{\sigma'} \subseteq B_{\sigma \tau} = (B_{\sigma})_{\tau}$. Applying \Cref{cor:approxbohr} gives
    \begin{align*}
        \langle \mu_{B_\sigma} \ast \mu_{B'_{\sigma'}}, f \rangle &\geq \langle \mu_{B_{\sigma}}, f \rangle - |\langle  \mu_{B_\sigma} \ast \mu_{B'_{\sigma'}} - \mu_{B_{\sigma}}, f \rangle| \\
        &\geq \langle \mu_{B_{\sigma}}, f \rangle - O(\tau \cdot \rk(B_{\sigma})) \tag{since $\|f\|_{\infty} \leq 1$} \\
        &\geq (1 - \eps/4) \langle \mu_{B_{\sigma}}, f \rangle
    \end{align*}
    for $\tau \leq c'\eps/2^d \rk(B_{\sigma})$ with constant $c' > 0$ chosen sufficiently small. 
    By our setting of $\tau$, we have
    \begin{equation}\label{eq:bohr_double_sum}
        \frac{1}{|B_{\sigma}||B'_{\sigma'}|}\sum_{g \in B_{\sigma}}\sum_{h \in B'_{\sigma'}} \left(\frac{|A'_{g + h}|}{|B'|}\right)^2 \geq (1 - \eps/4) \frac{1}{|B_{\sigma}|}\sum_{g \in B_{\sigma}} \left(\frac{|A'_g|}{|B'|}\right)^2.
    \end{equation}
    By combining \Cref{eq:not_spread_conc} with \Cref{eq:bohr_double_sum} and averaging, there is some choice of $g \in B_{\sigma}$ so that
    $$
        \frac{1}{|B'_{\sigma'}|} \sum_{h \in B'_{\sigma'}} \left(\frac{|A'_{g+h}|}{|B'|}\right)^2 \geq (1-\eps/4)(1+\eps) \frac{1}{|B_{\sigma}|}\sum_{g \in B_{\sigma}} \left(\frac{|A_g|}{|B|}\right)^2 \geq (1+\eps/2) \frac{1}{|B_{\sigma}|}\sum_{g \in B_{\sigma}} \left(\frac{|A_g|}{|B|}\right)^2.
    $$
    
    If our new collection $\{A'_h\}_{h \in B'_{\sigma'}+g}$ is not $(r, \delta, 1 + \eps)$-simultaneously spread in $B'$, we can iterate this procedure. 
    After $i$ iterations, we obtain Bohr sets $B^{(i)}, C^{(i)}$ and a collection $\{A_g^{(i)}\}_{g \in C^{(i)}}$ such that:
    \begin{enumerate}
        \item $B^{(i)}$ is a regular sub-Bohr set of $B^{(i-1)}$ of rank at most $i \cdot r$ and density at least $\delta^i$.
        \item Each $C^{(i)}$ is a translate of $B^{(i)}_{\sigma_i}$ for $\sigma_i \ge (\tau/2)^i$. 
        \item Each $A_g^{(i)}$ equals $(A_g^{(i-1)} - x_g^{(i)}) \cap B^{(i)}$ for some shift $x_g^{(i)}$.
        \item We have
        $$
            \frac{1}{|B^{(i)}_{\sigma_i}|}\sum_{g \in C^{(i)}} \left(\frac{|A_{g}^{(i)}|}{|B^{(i)}|}\right)^2 \geq (1 + \eps/2)^i \frac{1}{|G|} \sum_{g \in G} \left(\frac{|A_g|}{|G|} \right)^2.
        $$
    \end{enumerate}
    The process must end within $O (d / \eps )$ iterations, since the density on the right-hand side of (4) cannot exceed 1. This implies that $\rk(B^{(i)}) \le O(r d /\eps)$ for all $i$, so we can obtain the claimed bound on the final $\sigma_i$ by ultimately choosing $\tau =c'' \eps^2/(rd 2^d)$ for some constant $c''>0$. 
\end{proof}

\subsection{Using simultaneous spreadness}

Assuming we have a simultaneously spread collection, it remains to show the corresponding set must have roughly as many skew corners as a random set of the same density. This can be viewed as a ``structure vs. pseudo-randomness'' result, where we say that either a set $A \times A \subseteq G$ has roughly as many skew-corners as a random set of the same density, or $A$ admits a type of density increment on a structured set. 

\begin{restatable}{theorem}{strVSpsdAbelian}\label{thm:strucvspseudo}
    There exists an absolute constant $c > 0$ such that the following holds. Let $\eps \in (0,1)$ and $R \subseteq G$ be a regular Bohr set of rank $r$. Furthermore, let $\{A_g\}_{g \in C}$ be a collection of subsets of $R$ indexed by a regular Bohr set $C = R_{\sigma}$ with $\sigma \le c\eps/r2^d$, where $\sum_{g \in C} |A_g| \ge 2^{-d} |R| |C|$.
    Moreover, let $F = \sum_{g \in C} 1_{A_g} \circ 1_{A_g+g}$ and $F' = \sum_{g \in C} 1_{A_g} \circ 1_{A_g}$, and let $D(x) = |A_x|$ when $x \in C$ and 0 otherwise. Then either
    \begin{enumerate}
        \item 
        $|\langle \mu_F, \mu_D \rangle - \mu(R)^{-1}| \leq \eps \mu(R)^{-1}$, or
        \item There exists a sub-Bohr set $B \leq R$ with $\rk(B) \le \rk(R)+O_{\eps}(d^8)$ and $\mu(B) \geq \delta \mu(R)$ 
        such that $$\| \mu_{F'} \ast \mu_B \|_{\infty} \geq (1 + \eps/80)\mu(R)^{-1},$$ where $\delta = \exp\left(-O_{\eps}(r \log(1/\sigma) + rd^2 + rd\log r + d^{10})\right)$. 
    \end{enumerate}
\end{restatable}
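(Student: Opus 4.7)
The plan is to follow the five-step template of \Cref{thm:strucvspseudo_ff}, with Bohr sets and their dilates replacing subspaces throughout, and with all norms and inner products taken against a carefully chosen probability measure $\nu$ built from convolutions of dilates of $R$. The ubiquitous factor $\mu(R)^{-1}$ arises because once $F, F'$ are renormalized to densities, their ``baseline'' value on $R$ is $\mu(R)^{-1}$ rather than $1$. The hypothesis $\sigma \le c\eps/(r 2^d)$ on $C = R_\sigma$ is exactly what is needed so that, via \Cref{cor:approxbohr}, $\mu_C$-averages can be replaced by $\mu_R$-averages (and $\mu_R$ by convolutions with small dilates $R' = R_\rho$) up to relative error $O(\eps)$.

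\textbf{Non-uniformity.} If (1) fails, apply H\"older's inequality as in \Cref{lem:nonuniform_ff}, after first using \Cref{cor:approxbohr} and the small-$\sigma$ hypothesis to approximate $\mu_D$ by a suitable measure supported near $R$, to conclude $\|\mu_F - \mu(R)^{-1}\|_{p(\nu)} \ge \Omega(\eps\mu(R)^{-1})$ for integer $p = O(d)$ and a probability measure $\nu$ concentrated near $R$. \textbf{Unbalancing.} Apply the decoupling \Cref{lem:decoupling_ff} to replace $F$ with the spectrally non-negative $F'$, preserving the bound. Then apply a Bohr-set adaptation of \Cref{lem:oddmoments} (in which the ``baseline $1$'' of the lemma is replaced by $\mu(R)^{-1}$, using the probability measure $\nu = \mu_{R'} \circ \mu_{R'}$ whose Fourier transform $|\widehat{\mu_{R'}}|^2$ is manifestly non-negative), yielding $\|\mu_{F'}\|_{p'(\nu)} \ge (1 + \eps/4)\mu(R)^{-1}$ for some integer $p' = O_\eps(p)$.

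\textbf{Sifting.} Let $S = \{x : \mu_{F'}(x) > (1 + \eps/8)\mu(R)^{-1}\}$. Apply the general \Cref{lem:robust_witness_gen} with $B_1 = B_2 = R'$ to obtain $M_1, M_2 \subseteq R'$ with $\mu_{R'}(M_1), \mu_{R'}(M_2) \ge (\mu(R)/2^d)^{O_\eps(p')}$ and $\langle \mu_{M_1} \circ \mu_{M_2}, 1_S\rangle \ge 1 - \eps/160$. \textbf{Almost periodicity.} Use the Bohr-set version of \Cref{thm:periodicity_ff} (the Schoen--Sisask technique adapted for Bohr sets, as in \cite{bloom2023kelley}) to produce a regular sub-Bohr set $B \le R$ with $\rk(B) \le \rk(R) + O_\eps(d^8)$ and $\mu(B) \ge \delta\mu(R)$, so that $\langle \mu_B \ast \mu_{M_1} \circ \mu_{M_2}, 1_S\rangle \ge 1 - \eps/80$. \textbf{Combining.} Using symmetry of Bohr sets and the adjoint property, one gets $\langle \mu_B \ast (\mu_{M_1} \circ \mu_{M_2}), \mu_{F'}\rangle \le \|\mu_{F'} \ast \mu_B\|_\infty$. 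Combined with the definition of $S$, this yields
\[
  \|\mu_{F'} \ast \mu_B\|_\infty \;\ge\; \langle \mu_B \ast \mu_{M_1} \circ \mu_{M_2}, \mu_{F'}\rangle \;\ge\; \bigl(1 + \tfrac{\eps}{8}\bigr)\bigl(1 - \tfrac{\eps}{80}\bigr)\mu(R)^{-1} \;\ge\; \bigl(1 + \tfrac{\eps}{80}\bigr)\mu(R)^{-1}.
\]

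The main obstacle is the careful bookkeeping of measures and dilation parameters. The unbalancing step is particularly delicate: the naive candidate $\mu_{F'} - \mu(R)^{-1}$ is not spectrally non-negative because of its Fourier coefficient at $0$, so \Cref{lem:oddmoments} must be applied to a suitably modified function (subtracting a $\nu$-convolution mean rather than a constant) with the measure $\nu = \mu_{R'} \circ \mu_{R'}$ concentrated near $R$. The dilation radius $\rho$ defining $R'$ must be small enough that each approximation step via \Cref{cor:approxbohr} or \Cref{lem:BohrConv} incurs error $O(\rho \cdot \rk(R))$ fitting inside the $\eps$-budget, yet large enough that $\mu(R')$ does not collapse and degrade the final $\mu(B)$. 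Combined with the rank-$O_\eps(d^8)$ increment from almost periodicity and a further $O(1/\rk)$ dilation to restore regularity, this accounting produces the claimed density $\delta = \exp(-O_\eps(r\log(1/\sigma) + rd^2 + rd\log r + d^{10}))$.
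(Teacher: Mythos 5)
Your skeleton matches the paper's (the same five steps as in the finite-field argument), but the two places where the Bohr-set setting genuinely differs from $\F_q^n$ are exactly the places your sketch breaks down. First, the unbalancing step: you propose to show $\|\mu_F-\mu(R)^{-1}\|_{p(\nu)}$ is large and then ``apply the decoupling \Cref{lem:decoupling_ff}.'' That lemma is a Fourier identity for the uniform norm on $G$ and for functions of the exact form $\sum_g f_g\circ f_g^{g}$; it does not apply to a weighted norm $\|\cdot\|_{p(\nu)}$, and $\mu_F-\mu(R)^{-1}$ is not of that form. The paper instead (i) proves non-uniformity directly for the locally centred bilinear form $\frac{1}{\alpha}\sum_g(1_{A_g}-\alpha_g\mu_R)\circ(1_{A_g+g}-\alpha_g\mu_{R+g})$ in $\|\cdot\|_{p(\mu_C)}$, bounding the three cross terms via \Cref{cor:approxbohr} (this is \Cref{lem:nonuniform}, where the hypothesis $\sigma\le c\eps/r2^d$ enters); (ii) proves a weighted decoupling inequality (\Cref{lem:settoconvolve}), whose proof needs the covering bound of \Cref{lem:boundingWithDilation} and a spectrally non-negative weight, and which changes the measure to $\mu_{C'}\circ\mu_{C'}\ast\mu_{C''}\circ\mu_{C''}$ at the cost of a factor $2$; and (iii) undoes the local centring (\Cref{thm:usingoddmoments}) with another \Cref{cor:approxbohr} estimate before invoking \Cref{lem:oddmoments}. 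Your remark about ``subtracting a $\nu$-convolution mean'' points in this direction, but it does not by itself produce a function of the form $\sum_g f_g\circ f_g^{g}$ with controllable error, which is what any decoupling argument requires.

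Second, your choice of a single measure $\nu=\mu_{R'}\circ\mu_{R'}$ for sifting yields $M_1,M_2$ inside the same Bohr set $R'$, and then the almost-periodicity input (\Cref{thm:almostperiod}) is not applicable as stated: it requires $|S|\le 2|B|$, while the relevant $S$ lives in $R'-R'=R'_2$, which can exceed $|R'|$ by a factor $2^{\Theta(r)}$. The paper's two-scale choice $C'\subseteq C_{\tau_1}$ and $C''=C'_{\tau_2}$ with $\tau_2=O(1/r)$, together with the averaged measure $\mu_{C'}\ast\mu_{C''-x}$ (extracted from the fourfold convolution by averaging over $x$), exists precisely so that $|C'+C''|\le 2|C'|$ by regularity; it is also what makes the output $B\le C''\le R$ a sub-Bohr set of $R$ with the stated rank, and yields the density $\delta$ after chaining \Cref{lem:bohrsize} through $C, C', C''$. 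Relatedly, your sifting density bound $(\mu(R)/2^d)^{O_\eps(p')}$ is too weak: one must use that the norm lower bound $(1+\eps)\mu(R)^{-1}$ cancels the $\mu(R)$ factor in the embedded density (as in \Cref{cor:robust_witness_int}), giving $2^{-O_\eps(dp)}$ independently of $\mu(R)$; otherwise $d_1,d_2$ pick up $\log(1/\mu(R))$ and the rank increment in \Cref{thm:almostperiod} is no longer $O_\eps(d^8)$, so the stated conclusion would not follow.
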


A quick aside on normalization: say for a moment that the $\{A_g\}_{g \in C}$ are random subsets of $R$, and imagine we were in the perfectly structured situation that $R + R = R$. Then, we would expect $\mu_F$ to be a very uniform function supported on $R + C$. Since $C$ is a sufficiently narrow dilate of $R$, by regularity we can think of the distribution $R+C$ as being ``close'' to the uniform distribution on $R$. Thus, we are left with $\mu_F$ which is roughly uniform on $R$, and zero everywhere else. Since $\mu_F$ has mean 1 on all of $G$, we can really think of $\mu_F$ as being a uniform function taking values close to $\mu(R)^{-1}$ on $R$, and zero everywhere else. Making these structured statements into approximate ones will follow from Bohr set machinery. 

For our application (\Cref{thm:corner_bounds_Abelian}), we will instantiate $\sigma$ to be the value given by \Cref{lem:simul-spread}, which is smaller than the bound in the hypothesis of \Cref{thm:strucvspseudo}. This does not affect our final bound.

\begin{proof}[Proof of \Cref{thm:corner_bounds_Abelian} from \Cref{thm:strucvspseudo}] 
    Let $A \subseteq G \times G$ be skew-corner-free with $|A| = 2^{-d}|G|^2$, and define the collection $\{A_g\}_{g\in G}$ by $A_x = \{y : (x,y) \in A\}$. By Cauchy-Schwarz, we have $\sum|A_g|^2 \ge 2^{-2d}|G|^3$, so applying \Cref{lem:simul-spread} with $\{A_g\}_{g\in G}$, $\eps\coloneqq 1/80$, $\delta\coloneqq \exp(-cd^{11})$, $d\coloneqq 2d$, and $r \coloneqq c'd^8$ for constants $c,c' > 0$ to be determined later yields a collection of sets $\{\left(A_g - x_g \right) \cap R \}_{g \in C+w}$, where $R\subseteq G$ is a regular Bohr set of rank at most $O(d^9)$ and density $\mu(R) \geq 2^{-O(d^{12})}$, $C = R_{\rho}$ for $\rho = 2^{-O(d^2)}$ is a regular Bohr set, and $\{x_g\}_{g \in C+w}, w \in G$ are shifts. Observe that $\mu(C) \ge 2^{-O(d^{12})}$ by \Cref{lem:bohrsize}.

    For brevity, define $A'_g \coloneqq (A_{g+w} - x_{g+w}) \cap R$ for $g\in C$ and $A' \coloneqq \{(g,y): g \in C, y \in A'_g\} \subseteq C \times R$. Note that by \Cref{obs:shift}, $A$ has at least as many skew corners as $A'$, $|A'| \ge 2^{-2d}|R||C|$, and $\{A'_g\}_{g \in C}$ is $(r,\delta,1+\eps)$-simultaneously spread in $R$. 

    Apply \Cref{thm:strucvspseudo} with $\eps \coloneqq 1/2$ to the new collection, and set $c'$ large enough for $r$ to exceed the rank bound in the second conclusion. If $|\langle \mu_F, \mu_D \rangle - \mu(R)^{-1}| \leq \tfrac{1}{2} \mu(R)^{-1}$, \Cref{lem:normalized_corners_count} implies $A'$ contains at least
    \[
        \frac{\mu(R)^{-1}}{2}\cdot\frac{|A'|^3}{|G|^2} \ge 2^{-6d-1}\mu(R)^2\mu(C)^3 |G|^4 \ge 2^{-O(d^{12})}|G|^4
    \]
    skew corners. Since there are at most $|G|^3$ trivial skew corners in $A'$, we are done after rearrangement. 
    Otherwise, there exists a sub-Bohr set $B \leq R$ with $\rk(B) \le \rk(R) + r$ and $\mu(B) \geq \delta' \mu(R)$ 
    such that $$\| \mu_{F'} \ast \mu_B \|_{\infty} \geq \left(1 + \frac{1}{160}\right)\mu(R)^{-1},$$ where $\delta' = \exp\left(-O(d^{11})\right)$. By \Cref{lem:inftospread}, this contradicts the fact that $\{A'_{g}\}_{g \in C}$ is $(r,\delta,1+1/80)$-simultaneously spread in $R$, assuming $c$ is large enough to ensure $\delta < \delta'$.
\end{proof}

\subsubsection{Non-uniformity}

The following lemma shows that if no corners are present, then we obtain some non-uniformity on our collection of sets in an appropriate $p$-norm. Here, it will be critical that $C$ is contained in a sufficiently narrow dilate of $R$ so that we can apply \Cref{lem:BohrConv}. This combined with an application of H\"{o}lder's inequality constitutes most of the proof.

\begin{lemma}\label{lem:nonuniform}
    There exists an absolute constant $c > 0$ such that the following holds. Let $\eps > 0$ and $\{A_g\}_{g \in C}$ be a collection of subsets of a regular Bohr set $R$ of rank $r$, indexed by a regular Bohr set $C = R_{\sigma}$ with $\sigma \le c\eps/r 2^d$, where $\sum_{g \in C} |A_g| \ge 2^{-d} |R| |C|$. Define $\alpha_g = |A_g|/|G|$ and $\alpha = \sum_{g \in C} \alpha_g^2$. Additionally, let $D(x) = |A_x|$ when $x \in C$ and 0 otherwise, and let $f = \sum_{g \in C} 1_{A_g} \circ 1_{A_g+g}$. Then either
    \begin{enumerate}
        \item 
        $|\langle \mu_f, \mu_D \rangle - \mu(R)^{-1}| \leq \eps \mu(R)^{-1}$ or
        \item
        There is some even integer $p = O(d)$ such that 
        $$
            \left \|\frac{1}{\alpha} \sum_{g \in C} \left(1_{A_g} - \alpha_g \mu_R \right) \circ \left(1_{A_g+g} - \alpha_g \mu_{R+g}\right) \right \|_{p(\mu_{C})} \geq \frac{\eps}{2}\mu(R)^{-1}.
        $$
    \end{enumerate}
\end{lemma}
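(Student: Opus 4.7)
My plan mirrors \Cref{lem:nonuniform_ff}, but two complications arise: the ``background'' for $\mu_f$ is no longer the constant $1$ but rather $\mu(R)^{-1}$ (concentrated on $R$), and all approximations must be quantified via Bohr-set regularity. First I expand the centered summand of conclusion (2), which I denote $g_c$, to obtain the identity
\[
    \mu_f - g_c \;=\; \frac{1}{\alpha}\sum_{g\in C}\Bigl[\alpha_g\bigl(1_{A_g}\circ\mu_{R+g} + \mu_R\circ 1_{A_g+g}\bigr) \,-\, \alpha_g^2\, \mu_R\circ\mu_{R+g}\Bigr].
\]
In the ``ideal'' subspace setting (where $\mu_{R+g} = \mu_R$ whenever $g \in R$), each of the three inner products against $\mu_D$ equals exactly $\alpha_g\mu(R)^{-1}$, $\alpha_g\mu(R)^{-1}$, and $\mu(R)^{-1}$, so summing yields $\langle\mu_f - g_c, \mu_D\rangle = \mu(R)^{-1}$ on the nose. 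The first step is to make this approximate for Bohr sets.

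For the approximation, since $x, g \in C = R_\sigma$ implies $x - g \in R_{2\sigma}$, regularity of $R$ gives $|R\setminus(R + (g-x))|\leq O(\sigma r)|R|$. Rewriting $\langle 1_{A_g}\circ\mu_{R+g}, \mu_D\rangle$ as $\E_x\mu_D(x)\cdot|A_g\cap(R-(x-g))|/(\mu(R)|G|)$ and using $A_g \subseteq R$ then gives $\langle 1_{A_g}\circ\mu_{R+g}, \mu_D\rangle = \alpha_g\mu(R)^{-1} + O(\sigma r)$, and analogous estimates hold for the other two inner products. Summing with the weights $\alpha_g$ and $\alpha_g^2$, the main terms collapse to $\mu(R)^{-1}$ and the total error is of order $\sigma r \bigl(\sum_g\alpha_g\bigr)/\alpha + \sigma r\mu(R)^{-1}$. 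A Cauchy-Schwarz computation using $\sum_g|A_g|\geq 2^{-d}|R||C|$ gives $\bigl(\sum_g\alpha_g\bigr)/\alpha\leq 2^d/\mu(R)$, so the total error is $O(\sigma r 2^d/\mu(R))$, which is at most $(\eps/4)\mu(R)^{-1}$ under the hypothesis $\sigma\leq c\eps/(r 2^d)$ for $c$ small enough.

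If conclusion (1) fails, the triangle inequality yields $|\langle g_c, \mu_D\rangle|\geq (3\eps/4)\mu(R)^{-1}$, and it remains to convert this into a $p(\mu_C)$-norm lower bound on $g_c$. Since $\mu_D$ is supported on $C$, we have $\langle g_c, \mu_D\rangle = \mu(C)\langle g_c, \mu_D\rangle_{\mu_C}$, so H\"{o}lder's inequality combined with $L^p$-interpolation for $\mu_D$ gives
\[
    |\langle g_c, \mu_D\rangle| \;\leq\; \mu(C)\|g_c\|_{p(\mu_C)}\|\mu_D\|_\infty^{1/p}\|\mu_D\|_{1(\mu_C)}^{1/q}.
\]
Direct computation yields $\|\mu_D\|_\infty\leq 2^d/\mu(C)$ (from $\E[D]\geq 2^{-d}\mu(R)\mu(C)|G|$) and $\|\mu_D\|_{1(\mu_C)} = 1/\mu(C)$, whence $|\langle g_c, \mu_D\rangle|\leq 2^{d/p}\|g_c\|_{p(\mu_C)}$. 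Taking $p$ to be a sufficiently large even integer multiple of $d$ makes $2^{d/p}$ close to $1$ (say at most $3/2$), giving the desired $\|g_c\|_{p(\mu_C)}\geq (\eps/2)\mu(R)^{-1}$. The main obstacle is the quantitative control in the approximation step, specifically the Cauchy-Schwarz bound $\bigl(\sum_g\alpha_g\bigr)/\alpha\leq 2^d/\mu(R)$; without it, the absolute $O(\sigma r)$ error per summand could aggregate catastrophically, and the $2^d$ factor in the hypothesis on $\sigma$ is precisely what absorbs this.
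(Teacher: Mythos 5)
Your proposal is correct and follows essentially the same route as the paper: the same decomposition of $\mu_f$ into the centered term plus the two cross terms and the product term, the same Cauchy--Schwarz bound $\frac{1}{\alpha}\sum_g \alpha_g \le 2^d\mu(R)^{-1}$ absorbing the $O(\sigma r)$ Bohr-set errors, and the same H\"older-plus-interpolation finish over $\mu_C$ with $p=O(d)$ even. The only cosmetic difference is that you verify the approximation pointwise via regularity ($|R\setminus(R+t)|\le O(\sigma r)|R|$ for $t\in R_{2\sigma}$) rather than invoking \Cref{cor:approxbohr}, and you keep $\mu_D$ unnormalized instead of passing to $D'=D/|R|$; both are equivalent bookkeeping.
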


\begin{proof}
    Our proof follows closely \cite[Proposition 20]{bloom2023kelley}. Assume the first conclusion does not hold, and note that 
    \begin{align}
        \left \langle\frac{1}{\alpha} \sum_{g \in C} \left(1_{A_g} - \alpha_g \mu_R \right) \circ \left(1_{A_g+g} - \alpha_g \mu_{R+g}\right), \mu_D \right \rangle &= \left(\frac{1}{\alpha} \sum_{g \in C} \left \langle  1_{A_g} \circ 1_{A_g+g}, \mu_D \right \rangle - \mu(R)^{-1}\right) \label{eq:first}\\
        &- \left(\frac{1}{\alpha} \sum_{g \in C} \alpha_g \langle  1_{A_g} \circ  \mu_{R+g}, \mu_D \rangle - \mu(R)^{-1}\right) \label{eq:second} \\
        &- \left(\frac{1}{\alpha} \sum_{g \in C} \alpha_g \langle \mu_R \circ 1_{A_g+g}, \mu_D \rangle - \mu(R)^{-1}\right) \label{eq:third}\\
        &+ \left(\frac{1}{\alpha} \sum_{g \in C} \alpha_g^2 \langle \mu_R \circ \mu_{R+g}, \mu_D \rangle - \mu(R)^{-1}\right). \label{eq:fourth}
    \end{align}
We will bound the last three terms in magnitude, then apply H\"{o}lder's inequality to the first term (\ref{eq:first}) to obtain the second conclusion. Consider the second term (\ref{eq:second}); we have
\begin{align}
    \frac{1}{\alpha} \sum_{g \in C} \alpha_g \langle  1_{A_g} \circ  \mu_{R+g}, \mu_D \rangle - \mu(R)^{-1} &
    =  \frac{1}{\alpha} \sum_{g \in C} \alpha_g \langle  1_{A_g} , \mu_{R+g} \ast \mu_{-D} \rangle - \mu(R)^{-1} \nonumber \\
    &=  \frac{1}{\alpha} \sum_{g \in C} \alpha_g \langle  1_{A_g} , \mu_{R} \ast \mu_{-D}^{g} \rangle - \mu(R)^{-1} \nonumber \\
    &= \left(\frac{1}{\alpha} \sum_{g \in C} \alpha_g \langle 1_{A_g}, \mu_R \rangle\right) + \left(\frac{1}{\alpha} \sum_{g \in C} \alpha_g \langle  1_{A_g} , \mu_R \ast \mu_{-D}^{g} - \mu_R \rangle\right) - \mu(R)^{-1}. \label{eq:second_group}
\end{align}
First, since $A_g \subseteq R$, we know that $\langle 1_{A_g}, \mu_R \rangle$ = $\alpha_g \mu(R)^{-1}$. Summing over $g \in C$ gives
$$
\frac{1}{\alpha} \sum_{g \in C} \alpha_g \langle 1_{A_g}, \mu_R \rangle = \frac{1}{\alpha} \sum_{g \in C} \alpha^2_g \cdot \mu(R)^{-1} = \mu(R)^{-1}, 
$$
so the first and third terms of (\ref{eq:second_group}) cancel out. We next bound the second term of (\ref{eq:second_group}).
Since $\mu_D$ is supported on $C = R_{\sigma}$ and $C$ is symmetric, we have $\mu_{-D}$ is supported on $C$ as well. As $g \in C$, we can infer that $\mu_{-D}^{g}$ is supported on $C + C \subseteq R_{2\sigma}$. The last point to note is that $\frac{1}{\alpha} \left( \sum_{g \in C} \alpha_g \right) \leq 2^{d} \mu(R)^{-1}$ by Cauchy-Schwarz. We will now use the assumption that $\sigma\le c\eps/2^d \rk(R)$ for a small enough constant $c>0$. By \Cref{cor:approxbohr}, we know 
\begin{align*}
    \left|\frac{1}{\alpha} \sum_{g \in C} \alpha_g \langle  1_{A_g} , \mu_R \ast \mu_{-D}^g - \mu_R \rangle \right| &\leq \frac{1}{16 \alpha} \sum_{g \in C}  \alpha_g \eps 2^{-d} \leq \frac{\eps}{16} \mu(R)^{-1}.
\end{align*}
Similar reasoning bounds the third (\ref{eq:third}) and fourth (\ref{eq:fourth}) terms by the same quantity. 
Finally, let $f_0 = \sum_{g \in C} \left(1_{A_g} - \alpha_g \mu_R \right) \circ \left(1_{A_g+g} - \alpha_g \mu_{R+g}\right)$. Combining the above bounds gives
$$
    \left| \langle \mu_f, \mu_D \rangle - \mu(R)^{-1} - \langle \mu_{f_0}, \mu_D \rangle \right| \le \frac{\eps}{4} \mu(R)^{-1}.
$$
For simplicity, define $D'(x) = D(x)/|R|$, so that $\|D'\|_{\infty} \leq 1$. Note that $\mu_D = \mu_{D'}$. If the first conclusion fails, and using the fact that $D'$ is supported on $C$, we have
$$
    \frac{3}{4} \eps \mu(R)^{-1} \le |\langle \mu_{f_0}, \mu_{D} \rangle| = |\langle \mu_{f_0}, \mu_{D'} \rangle| = \E_C[D']^{-1} \cdot |\langle \mu_{f_0}, D' \rangle_{\mu_C}| .
$$
An application of H\"{o}lder's inequality, for any $p \geq 1$, gives 
$$
    \left| \langle \mu_{f_0}, D' \rangle_{\mu_C} \right| \le \|\mu_{f_0}\|_{p(\mu_C)} \cdot \|D'\|_{\left(\frac{p}{p-1}\right)(\mu_C)} \le  \|\mu_{f_0}\|_{p(\mu_C)} \cdot \|D'\|_{\infty} \cdot \|D'\|_{1(\mu_C)}^{1-1/p}.
$$
We have $\|D'\|_{1(\mu_C)} = \E_C[D']^{-1} \geq 2^{-d}$. Combining the above inequalities gives
$$
    2^{d/p} \| \mu_{f_0} \|_{p(\mu_C)} \geq \left| \langle \mu_{f_0}, \mu_{D'} \rangle \right| \ge \frac{3}{4} \eps \mu(R)^{-1}.
$$
Choosing $p$ an even integer so that $2^{d/p} \geq 3/2$ gives the result.
\end{proof}

\subsubsection{Unbalancing}

At this point, we have a non-uniformity assumption on the function
$$
    F_0 = \sum_{g \in C} \left(1_{A_g} - \alpha_g \mu_R \right) \circ \left(1_{A_g+g} - \alpha_g \mu_{R+g}\right),
$$
and we would like to switch to working with the function $F' = \sum_{g \in C} 1_{A_g} \circ 1_{A_g}$. The reason for this is mostly technical; $F'$ has non-negative Fourier coefficients, while $F_0$ in general may not. There are minor complications that arise when working with non-uniform measures, which accounts for the main differences between the following lemma and the corresponding one in the finite field case. 

The goal of this section is to prove the following: 

\begin{restatable}{lemma}{unbalancingInt}\label{lem:unbalancing_int}
    There exists an absolute constant $c > 0$ such that the following holds. Let $\eps \in (0,1)$ and $p \geq 2$ an even integer. Furthermore, let $\{A_g\}_{g \in C}$ be a collection of subsets of a regular Bohr set $R$ of rank $r$ indexed by a regular Bohr set $C = R_{\sigma}$ for $\sigma \leq c \eps/r2^d$. Moreover, let $C', C'' \subseteq C_{\tau}$ both be regular Bohr sets for any $\tau \leq c r ^{-1}$, and define $\alpha_g = |A_g|/|G|$ and $\alpha = \sum_{g \in C} \alpha_g^2$. Then 
    $$
        \left \|\frac{1}{\alpha} \sum_{g \in C} \left(1_{A_g} - \alpha_g \mu_R \right) \circ \left(1_{A_g+g} - \alpha_g \mu_{R+g}\right) \right \|_{p(\mu_{C})} \geq \eps \mu(R)^{-1}
    $$
    implies that
    $$
        \left \| \frac{1}{\alpha} \sum_{g \in C} 1_{A_g} \circ 1_{A_g} \right \|_{p'(\mu_{C'} \ast \mu_{C'' - x})} \geq \left(1+\frac{\eps}{8}\right) \mu(R)^{-1}
    $$
    for some shift $x \in G$ and integer $p' = O_{\eps}(p)$. 
\end{restatable}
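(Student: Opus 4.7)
The plan is to mirror the finite-field analog \Cref{cor:unbalancing_ff}: a Fourier-based decoupling step (like \Cref{lem:decoupling_ff}) to pass from the ``shifted'' convolutions $F_0 := \sum_{g \in C} f_g \circ f_g^g$ to the simpler $F'' := \sum_{g \in C} f_g \circ f_g$, followed by the odd-moments lemma (\Cref{lem:oddmoments}) to convert a general deviation into an upward one, and a final step relating $F''$ back to $F'$. Here $f_g := 1_{A_g} - \alpha_g \mu_R$, so that $F_0$ is the function appearing in the hypothesis; note that $F''$ has mean zero (since $\E[f_g] = 0$) and is spectrally non-negative (since $\widehat{F''}(\gamma) = \sum_g |\widehat{f_g}(\gamma)|^2$), making it a valid input to \Cref{lem:oddmoments}.

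For the decoupling I would use the identity $\widehat{F_0}(\gamma) = \sum_g \gamma(-g)|\widehat{f_g}(\gamma)|^2$, so the Fourier coefficients of $F_0$ and $F''$ differ only by unimodular phase factors. Expanding the $p$-th power (for even $p$) and applying the triangle inequality, as in \Cref{lem:decoupling_ff}, yields $\|F_0\|_{p(\nu)} \le \|F''\|_{p(\nu)}$ under any probability measure $\nu$ with $\widehat{\nu} \ge 0$ (say $\nu = \mu_{C'} \circ \mu_{C'}$). Transferring the hypothesis from $\|\cdot\|_{p(\mu_C)}$ to such a convolution measure is the technical heart of the argument: since $C', C'' \subseteq C_\tau$ with $\tau = O(1/r)$, \Cref{cor:approxbohr} lets us replace $\mu_C$ by $\mu_C \ast (\mu_{C'} \circ \mu_{C''})$ with approximation error controlled by $\|F_0^p/\alpha^p\|_\infty \cdot O(\tau r)$. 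Unfolding and pigeonholing over the inner shift $z$ then produces an $x \in G$ satisfying
\[
\left\|\tfrac{1}{\alpha} F_0\right\|_{p(\mu_{C'} \ast \mu_{C''-x})} \ge (\eps - o(1))\mu(R)^{-1},
\]
and combining with the decoupling (after a final comparison between the symmetric measure used above and the shifted convolution measure) gives the analogous lower bound for $F''/\alpha$.

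To conclude, I would apply \Cref{lem:oddmoments} to the spectrally non-negative, mean-zero function $\frac{\mu(R)}{\alpha} F''$, producing an integer $p' = O_\eps(p)$ with $\|\frac{\mu(R)}{\alpha} F'' + 1\|_{p'(\nu)} \ge 1 + \eps/4$, and then relate $F''$ back to $F'$. Expanding
\[
F'' = F' - 2\sum_g \alpha_g (1_{A_g} \circ \mu_R) + \alpha(\mu_R \circ \mu_R)
\]
and using regularity of $R$ together with \Cref{cor:approxbohr} shows that each correction term is approximately the constant $\alpha \mu(R)^{-1}$ times $1_R$ on most of $R$; hence $\frac{\mu(R)}{\alpha} F'' + 1 \approx \frac{\mu(R)}{\alpha} F'$ there, and the residual error is absorbed into the slack $\eps/4 \to \eps/8$ in the conclusion. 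The main obstacle is the change-of-measure step: replacing $\mu_C$ by a convolution measure with non-negative Fourier transform while losing at most a constant factor of $\eps$ requires controlling $\|F_0\|_\infty$ (which is $O(\alpha \mu(R)^{-1})$) against the approximation error from \Cref{cor:approxbohr}, and pigeonholing carefully to extract the shift $x$.
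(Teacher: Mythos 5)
Your high-level skeleton (decoupling via $|\widehat{F_0}(\gamma)| \le \widehat{F''}(\gamma)$ for $F_0 = \sum_g f_g \circ f_g^g$ and $F'' = \sum_g f_g \circ f_g$, then \Cref{lem:oddmoments}, then comparing $F''$ with $F' = \sum_g 1_{A_g}\circ 1_{A_g}$ via \Cref{cor:approxbohr}, and a pigeonhole to extract the shift $x$) matches the paper's, but the step you yourself call the technical heart --- transferring the hypothesis from $\mu_C$ to a convolution measure --- is done in a way that fails quantitatively. You control the change of measure by \Cref{cor:approxbohr} with additive error $O(\tau r)\cdot\|(F_0/\alpha)^p\|_\infty$. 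First, the estimate $\|F_0\|_\infty = O(\alpha\mu(R)^{-1})$ is not valid: since $(f_g\circ f_g^g)(x) = (f_g\circ f_g)(x-g)$, taking every $A_g$ equal to a fixed subset of $R$ of relative density $2^{-d}$ that is essentially invariant under translation by elements of $C-C$ makes $F_0$ of size roughly $\sum_g \alpha_g \approx 2^d\,\alpha\,\mu(R)^{-1}$ near the origin, a factor $2^d$ larger than you claim. Second, and decisively, the error must be compared with the main term $\|F_0/\alpha\|_{p(\mu_C)}^p \ge (\eps\mu(R)^{-1})^p$: writing $K$ for $\mu(R)\cdot\|F_0/\alpha\|_\infty$, the error is $O(\tau r)\,K^p\,\mu(R)^{-p}$, so absorbing it forces $\tau \lesssim (\eps/K)^p/r$ --- exponentially small in $p = O(d)$ even if $K$ were an absolute constant, and of order $2^{-\Omega(dp)}/r$ in reality --- whereas the lemma must hold for every $\tau \le c r^{-1}$, and it is this mild constraint that the downstream bookkeeping in \Cref{thm:strucvspseudo} relies on. This is precisely the obstruction that \Cref{lem:settoconvolve} is built to avoid: because $p$ is even, $F_0^p \ge 0$, so the pointwise domination $\mu_C \le 2\mu_{C_{1+\rho}} \ast \nu$ of \Cref{lem:boundingWithDilation} (with $\nu = \mu_{C'}\circ\mu_{C'}\ast\mu_{C''}\circ\mu_{C''}$ and $\rho = 4\tau$) changes the measure at the cost of a multiplicative factor $2$ with no additive error, and the translate $t\in C_{1+\rho}$ it introduces is killed inside the Fourier expansion by the very same triangle inequality $|\widehat{F_0}|\le\widehat{F''}$ that gives the decoupling; the change of measure and the decoupling have to be carried out simultaneously, not as two separate estimates.

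There is also an ordering problem. You pigeonhole to the shifted measure $\mu_{C'}\ast\mu_{C''-x}$ \emph{before} decoupling and \Cref{lem:oddmoments}, but both of those steps need the reference measure to be spectrally non-negative, which $\mu_{C'}\ast\mu_{C''-x}$ is not; the ``final comparison between the symmetric measure and the shifted convolution measure'' you invoke is another uncontrolled change of measure of exactly the problematic kind. In the paper all intermediate norms are taken with respect to the symmetric measure $\mu_{C'}\circ\mu_{C'}\ast\mu_{C''}\circ\mu_{C''}$ (through \Cref{cor:usedecoupling} and \Cref{thm:usingoddmoments}), and the averaging over translates producing $x \in C'+C''$ is the very last step. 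Finally, a smaller point: the comparison of $F''$ with $F'$ holds only on the narrow support of the measure, contained in a dilate $R_{O(\sigma)}$, not ``on most of $R$''; it is here that the hypothesis $\sigma \le c\eps/r2^d$ is needed to absorb the factor $\frac{1}{\alpha}\sum_g\alpha_g \le 2^d\mu(R)^{-1}$, and this part of your plan is repairable --- the genuine gap is the change of measure.
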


To prove \Cref{lem:unbalancing_int}, we would like to remove the ``shifts'' in $F_0$ while maintaining the non-uniformity assumption. As a first step, we would like to switch to working with the function 
$$
    F'_0 = \sum_{g \in C} \left(1_{A_g} - \alpha_g \mu_R \right) \circ \left(1_{A_g} - \alpha_g \mu_{R}\right),
$$
which enjoys the property of being spectrally non-negative. For technical reasons, we will want to start working with a non-uniform measure which is also spectrally non-negative, so that we can eventually apply \Cref{lem:oddmoments}. The following lemma accomplishes both of these goals with little loss in parameters. 

\begin{lemma}\label{lem:settoconvolve}
    There exists an absolute constant $c > 0$ such that the following holds. Let $p \geq 2$ be an even integer and $\{f_g\}_{g \in S}$ be a collection of real-valued functions on $G$ indexed by some set $S \subseteq G$. Additionally, let $B \subseteq G$ and $B', B'' \subseteq B_{\tau}$ all be regular Bohr sets for $\tau \le c\cdot\rk(B)^{-1}$. Then
    $$
        \left \| \sum_{g \in S} f_g \circ f_g \right \|_{p(\mu_{B'} \circ \mu_{B'} \ast \mu_{B''} \circ \mu_{B''})} \geq \frac{1}{2} \left \| \sum_{g \in S} f_g \circ f_g^{g} \right \|_{p(\mu_B)}
    $$
    where $f^g$ denotes the function $f^g(x)=f(x-g)$.
\end{lemma}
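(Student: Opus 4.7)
Let $F = \sum_{g\in S}f_g \circ f_g^g$, $F' = \sum_{g\in S}f_g\circ f_g$, and $\nu = \mu_{B'}\circ\mu_{B'}\ast\mu_{B''}\circ\mu_{B''}$. The plan is to prove the slightly stronger inequality $\|F\|_{p(\mu_B)}^p \leq 2\|F'\|_{p(\nu)}^p$, from which the lemma follows since $2^{1/p}\leq 2$. Morally, this is the Bohr-set analog of the finite-field decoupling identity \Cref{lem:decoupling_ff}; the purpose of introducing $\nu$ on the left is to allow the same triangle-inequality argument to go through once the uniform measure is replaced by $\mu_B$.

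Two Fourier observations drive the argument. A direct computation using $\widehat{f^g}(\alpha) = \overline{\gamma_\alpha(g)}\widehat{f}(\alpha)$ gives $\widehat{F}(\alpha) = \sum_{g\in S}|\widehat{f_g}(\alpha)|^2 \overline{\gamma_\alpha(g)}$ and $\widehat{F'}(\alpha) = \sum_{g\in S}|\widehat{f_g}(\alpha)|^2$, so $\widehat{F'}\geq 0$ and $|\widehat{F}(\alpha)|\leq \widehat{F'}(\alpha)$ by the triangle inequality. Moreover $\widehat\nu = |\widehat{\mu_{B'}}|^2|\widehat{\mu_{B''}}|^2\geq 0$, and $\nu$ is a probability measure supported in $B_{4\tau}$ (since $B'+B'\subseteq B_{2\tau}$ and likewise for $B''$, using the Bohr-width triangle inequality). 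Choosing $c$ small enough that $4\tau$ lies below the threshold of \Cref{lem:boundingWithDilation}, that lemma then yields $\mu_B \leq 2\mu_{B_{1+4\tau}} \ast \nu$. Since $p$ is even we have $F^p\geq 0$, so unwinding the convolution gives
$$\|F\|_{p(\mu_B)}^p \leq 2\langle \mu_{B_{1+4\tau}}\ast\nu, F^p\rangle = 2\,\mathbb{E}_y\,\mu_{B_{1+4\tau}}(y)\cdot\mathbb{E}_z\,\nu(z)\,F(z+y)^p.$$

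The heart of the argument is then the pointwise-in-$y$ bound $\mathbb{E}_z\nu(z)F(z+y)^p \leq \|F'\|_{p(\nu)}^p$. Expanding $F(z+y)^p$ via Fourier inversion, taking $\nu$-expectation over $z$, and using $\mathbb{E}_x\nu(x)\gamma_\beta(x) = \widehat\nu(-\beta)$ gives
$$\mathbb{E}_z\nu(z)F(z+y)^p = \sum_{\alpha_1,\ldots,\alpha_p}\gamma_{\alpha_1+\cdots+\alpha_p}(y)\,\widehat\nu(-(\alpha_1+\cdots+\alpha_p))\prod_{i=1}^p\widehat F(\alpha_i).$$
Since this quantity is non-negative, I would replace it with its absolute value and apply the triangle inequality; using $|\gamma_\beta(y)|=1$, $\widehat\nu\geq 0$, and $|\widehat F(\alpha)|\leq \widehat{F'}(\alpha)$, the result is bounded above by the analogous sum with each $\widehat F(\alpha_i)$ replaced by $\widehat{F'}(\alpha_i)$, which is precisely the Fourier expansion of $\mathbb{E}_z\nu(z)F'(z)^p = \|F'\|_{p(\nu)}^p$. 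Averaging over $y$ and using $\mathbb{E}\mu_{B_{1+4\tau}} = 1$ then closes the argument. The only real technical point will be verifying the support and size conditions on $\nu$ precisely, so that \Cref{lem:boundingWithDilation} applies; beyond that I do not anticipate a significant obstacle, as the proof is a direct Fourier-analytic translation of the finite-field argument to the weighted-measure setting.
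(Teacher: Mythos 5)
Your proposal is correct and follows essentially the same route as the paper: the same measure $\nu=\mu_{B'}\circ\mu_{B'}\ast\mu_{B''}\circ\mu_{B''}$ (spectrally non-negative, supported on $B_{4\tau}$), the same application of \Cref{lem:boundingWithDilation}, and the same Fourier-side triangle inequality via $|\widehat F|\le\widehat{F'}$; the only cosmetic difference is that you bound the inner expectation pointwise in the shift and then average, whereas the paper first averages to fix a single shift $t$.
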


\begin{proof}
For simplicity, we will define 
$$
    F_1 \coloneqq \sum_{g \in S} f_g \circ f_g^{g} \quad\text{and}\quad F_2 \coloneqq \sum_{g \in S} f_g \circ f_g.
$$
It follows by linearity that for $\gamma \in \widehat{G}$, we have
$$
    \widehat{F_1}(\gamma) = \sum_{g \in S} |\widehat{f_g}(\gamma)|^2 \gamma(g) \quad\text{and}\quad\widehat{F_2}(\gamma) = \sum_{g \in S} |\widehat{f_g}(\gamma)|^2,
$$
and by the triangle inequality, we have $|\widehat{F_1}(\gamma)| \leq |\widehat{F_2}(\gamma)|$.

From here, the proof is almost identical to \cite[Proposition 19]{bloom2023kelley}. Let $\nu = \mu_{B'} \circ \mu_{B'} \ast \mu_{B''} \circ \mu_{B''}$, and observe that $\nu$ is supported on $B_{\rho}$ for $\rho=4 \tau$. Additionally, note that $\nu$ is spectrally non-negative, since $\widehat{\nu}(\gamma) = \widehat{\mu_{B'}}(\gamma)^2 \widehat{\mu_{B''}}(\gamma)^2$.
Apply \Cref{lem:boundingWithDilation} to obtain 
\begin{align*}
    \mu_B &\leq 2\mu_{B_{1 + \rho}} \ast \nu.
\end{align*}
Expanding, we have
\begin{align*}
    \|F_1 \|_{p(\mu_B)}^{p} &= \E_{x \in G} \mu_B(x) F_1(x)^p \\
    &\leq 2 \E_{x \in G} \left( \mu_{B_{1+\rho}} \ast \nu \right)(x) F_1(x)^p \\
    &= 2 \E_{t \in B_{1+\rho}} \E_{x \in G} \nu(x-t) F_1(x)^p.
\end{align*}
Recall that $\widehat{f^p} = \widehat{f}^{(p)}$, where $\widehat{f}^{(p)}$ is the $p$-fold convolution of $\widehat{f}$.
By averaging, there exists some $t \in B_{1 + \tau}$ such that 
\begin{align*}
    \| F_1 \|_{p(\mu_B)}^{p} &\leq 2 \E_{x \in G} \nu(x-t) F_1(x)^p \\
    &= 2 \sum_{\gamma \in \widehat{G}} \widehat{\nu}(\gamma) \gamma(-t) \left( \widehat{F_1}\right)^{(p)}(\gamma) \\
    &\leq 2 \sum_{\gamma \in \widehat{G}} \widehat{\nu}(\gamma)\left( | \widehat{F_1} | \right)^{(p)}(\gamma) \\
    &\leq 2 \sum_{\gamma \in \widehat{G}} \widehat{\nu}(\gamma)\left( | \widehat{F_2} | \right)^{(p)}(\gamma) \\
    &= 2 \| F_2 \|_{p(\nu)}^p. \tag*{\qedhere}
\end{align*}
\end{proof}

From here we can instantiate the previous lemma to obtain the following corollary. 

\begin{corollary}\label{cor:usedecoupling}
    There exists an absolute constant $c > 0$ such that the following holds. Let $\eps \in (0,1)$ and $p \geq 2$ an even integer. Furthermore, let $\{A_g\}_{g \in C}$ be a collection of subsets of $R \subseteq G$ indexed by a regular Bohr set $C$. Define $\alpha_g = |A_g|/|G|$ and $\alpha = \sum_{g \in C} \alpha_g^2$. Moreover, let $C', C'' \subseteq C_{\tau}$ be regular Bohr sets for some $\tau \le c\cdot \rk(C)^{-1}$, and let $\nu = \mu_{C'} \circ \mu_{C'} \ast \mu_{C''} \circ \mu_{C''}$. Then 
    $$
        \left \|\frac{1}{\alpha} \sum_{g \in C} \left(1_{A_g} - \alpha_g \mu_R \right) \circ \left(1_{A_g+g} - \alpha_g \mu_{R+g}\right) \right \|_{p(\mu_{C})} \geq \eps \mu(R)^{-1}
    $$
    implies that
    $$
        \left \| \frac{1}{\alpha} \sum_{g \in C} \left(1_{A_g} - \alpha_g \mu_R \right) \circ \left(1_{A_g} - \alpha_g \mu_R \right) \right \|_{p(\nu)} \geq \frac{1}{2} \eps \mu(R)^{-1}.
    $$
\end{corollary}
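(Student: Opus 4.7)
The plan is to apply Lemma~\ref{lem:settoconvolve} directly, essentially as a repackaging into the notation used by Lemma~\ref{lem:unbalancing_int}. I would set $f_g = 1_{A_g} - \alpha_g \mu_R$ for each $g \in C$, take the indexing set to be $S = C$, and instantiate the ambient Bohr sets as $B = C$, $B' = C'$, $B'' = C''$. The hypotheses of Lemma~\ref{lem:settoconvolve} then match those of the corollary: we are given $C', C'' \subseteq C_{\tau}$ with $\tau \le c\cdot\rk(C)^{-1}$, which is exactly the regularity/dilation condition required there.

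The one algebraic point to verify is how the shift $f_g^g$ interacts with the definition of $f_g$. A direct computation gives
\[
f_g^g(x) = f_g(x-g) = 1_{A_g}(x-g) - \alpha_g \mu_R(x-g) = 1_{A_g + g}(x) - \alpha_g \mu_{R+g}(x),
\]
so
\[
\sum_{g \in C} f_g \circ f_g^g = \sum_{g \in C} \bigl(1_{A_g} - \alpha_g \mu_R\bigr) \circ \bigl(1_{A_g+g} - \alpha_g \mu_{R+g}\bigr).
\]
Thus the function whose $p(\mu_C)$-norm is lower-bounded by $\eps\alpha\,\mu(R)^{-1}$ in the hypothesis is precisely $\sum_{g \in C} f_g \circ f_g^g$, and the function whose $p(\nu)$-norm we wish to lower-bound in the conclusion is precisely $\sum_{g \in C} f_g \circ f_g$.

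Lemma~\ref{lem:settoconvolve} then immediately yields
\[
\left\| \sum_{g \in C} f_g \circ f_g \right\|_{p(\nu)} \ge \frac{1}{2}\left\| \sum_{g \in C} f_g \circ f_g^g \right\|_{p(\mu_C)} \ge \frac{\eps\alpha}{2\mu(R)},
\]
and dividing through by $\alpha$ gives the claimed bound. There is no real obstacle here: the corollary amounts to recognizing that the ``shift away the $g$'' operation built into Lemma~\ref{lem:settoconvolve} is exactly what is needed to convert the hypothesis into the spectrally nonnegative form on the right-hand side, while the measure $\nu$ is chosen precisely so that the lemma's conclusion is stated with respect to it.
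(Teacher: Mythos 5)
Your proposal is correct and matches the paper's proof: both apply \Cref{lem:settoconvolve} with $B \coloneqq C$, $S \coloneqq C$, $B' \coloneqq C'$, $B'' \coloneqq C''$, and $f_g \coloneqq 1_{A_g} - \alpha_g \mu_R$, using the identity $f_g^g = 1_{A_g+g} - \alpha_g \mu_{R+g}$ and homogeneity of the norm to divide by $\alpha$. No issues.
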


\begin{proof}
    Apply \Cref{lem:settoconvolve} with $B \coloneqq C$, $S \coloneqq C$, $B' \coloneqq C'$, $B'' \coloneqq C''$, and $f_g \coloneqq 1_{A_g} - \alpha_g \mu_R$. We have 
    $$
        \sum_{g \in C} f_g \circ f_g = \sum_{g \in C} \left(1_{A_g} - \alpha_g\mu_R\right) \circ \left(1_{A_g} - \alpha_g\mu_R \right) 
    $$
    and
    \[
        \sum_{g \in C} f_g \circ f^g_g = \sum_{g \in C} \left(1_{A_g} - \alpha_g\mu_R\right) \circ \left(1_{A_g}^g - \alpha_g\mu_R^g \right) = \sum_{g \in C} \left(1_{A_g} - \alpha_g \mu_R \right) \circ \left(1_{A_g+g} - \alpha_g \mu_{R+g}\right). \tag*{\qedhere}
    \]
\end{proof}

The following lemma lets us move from a non-uniformity assumption on a function $F'_0$ whose mean is zero to an assumption on $F'$. This will be critical when we want to apply ``sifting'' later (see \Cref{sec:sifting_int}). The lemma applies \Cref{lem:oddmoments} after making some estimates using Bohr sets and \Cref{lem:BohrConv}. 

\begin{lemma}\label{thm:usingoddmoments}
    There exists an absolute constant $c > 0$ such that the following holds. Let $\eps \in (0,1)$ and $p \geq 2$ be an integer. Additionally, let $\{A_g\}_{g \in C}$ be a collection of subsets of a regular Bohr set $R$ of rank $r$ indexed by a regular Bohr set $C$ with $\sum_{g \in C} |A_g| \geq 2^{-d} |R| |C|$. Define $\alpha_g = |A_g|/|G|$ and $\alpha = \sum_{g \in C} \alpha_g^2$, and
    let $\nu \colon G \to \mathbb{R}_{\geq 0}$ be a probability measure supported on $R_{\sigma}$ for $\sigma \le c\eps/r2^d$ such that $\widehat{\nu} \geq 0$. If 
    $$
        \left \| \frac{1}{\alpha} \sum_{g \in C} \left( 1_{A_g} - \alpha_g \mu_R \right) \circ \left( 1_{A_g} - \alpha_g \mu_R \right) \right \|_{p(\nu)} \geq \eps \mu(R)^{-1},
    $$
    then there is an integer $p' = O_{\eps}(p)$ such that
    $$
        \left \| \frac{1}{\alpha} \sum_{g \in C} 1_{A_g} \circ 1_{A_g} \right \|_{p'(\nu)} \geq \left(1 + \eps / 4 \right) \mu(R)^{-1}.
    $$
\end{lemma}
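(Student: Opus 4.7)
The plan is to apply \Cref{lem:oddmoments} to a rescaled version of the hypothesis function and then transfer the conclusion to $F \coloneqq \tfrac{1}{\alpha}\sum_{g \in C} 1_{A_g} \circ 1_{A_g}$ via a Bohr-set approximation that exploits the narrowness of $\mathrm{supp}(\nu) \subseteq R_\sigma$. Write $F_0 \coloneqq \tfrac{1}{\alpha}\sum_{g \in C}(1_{A_g} - \alpha_g \mu_R) \circ (1_{A_g} - \alpha_g \mu_R)$, so the hypothesis reads $\|F_0\|_{p(\nu)} \ge \eps \mu(R)^{-1}$. As a sum of self difference convolutions $f_g \circ f_g$ with $f_g = 1_{A_g} - \alpha_g \mu_R$, the function $F_0$ is spectrally non-negative; hence $\tilde F_0 \coloneqq \mu(R) F_0$ satisfies $\|\tilde F_0\|_{p(\nu)} \ge \eps$ with $\widehat{\tilde F_0} \ge 0$. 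Since $\widehat{\nu} \ge 0$ by hypothesis, \Cref{lem:oddmoments} produces an integer $p' = O_\eps(p)$ with $\|\tilde F_0 + 1\|_{p'(\nu)} \ge 1 + \eps/2$, equivalently
\[
    \|F_0 + \mu(R)^{-1}\|_{p'(\nu)} \ge (1 + \eps/2) \mu(R)^{-1}.
\]

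It then suffices to show that $|F - F_0 - \mu(R)^{-1}| \le \eps \mu(R)^{-1}/4$ pointwise on $R_\sigma$. Expanding,
\[
    F - F_0 = \frac{1}{\alpha} \sum_{g \in C} \bigl[\alpha_g (1_{A_g} \circ \mu_R) + \alpha_g (\mu_R \circ 1_{A_g}) - \alpha_g^2 (\mu_R \circ \mu_R)\bigr],
\]
and because $\tfrac{1}{\alpha} \sum_g \alpha_g^2 = 1$, each convolution need only be pinned near its value at $0$. Using $A_g \subseteq R$, for $x \in R_\sigma$,
\[
    (1_{A_g} \circ \mu_R)(x) = \mu(R)^{-1} \cdot \frac{|A_g \cap (R-x)|}{|G|},
\]
which deviates from $\alpha_g \mu(R)^{-1}$ by at most $\mu(R)^{-1} |R \setminus (R-x)|/|G| \le O(\sigma r)$ by regularity of $R$. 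The same estimate holds for $\mu_R \circ 1_{A_g}$, and analogously $|(\mu_R \circ \mu_R)(x) - \mu(R)^{-1}| \le O(\sigma r) \mu(R)^{-1}$. Combining these pointwise estimates on $R_\sigma$ yields
\[
    |F - F_0 - \mu(R)^{-1}|(x) \le O(\sigma r) \cdot \frac{2\sum_g \alpha_g}{\alpha} + O(\sigma r) \mu(R)^{-1}.
\]

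For the first term, the density hypothesis $\sum_g |A_g| \ge 2^{-d}|R||C|$ gives $\sum_g \alpha_g \ge 2^{-d}\mu(R)|C|$, and two applications of Cauchy-Schwarz give $\alpha \ge (\sum_g \alpha_g)^2/|C| \ge 2^{-2d}\mu(R)^2|C|$ and then $\sum_g \alpha_g/\alpha \le \sqrt{|C|/\alpha} \le 2^d \mu(R)^{-1}$. Hence $|F - F_0 - \mu(R)^{-1}|(x) \le O(\sigma r 2^d)\mu(R)^{-1}$ for every $x \in R_\sigma$, and since $\nu$ is a probability measure supported on $R_\sigma$, the same bound holds for $\|F - F_0 - \mu(R)^{-1}\|_{p'(\nu)}$. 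Choosing the absolute constant $c > 0$ in $\sigma \le c\eps/r 2^d$ small enough makes this at most $\eps \mu(R)^{-1}/4$, and the triangle inequality in $L^{p'(\nu)}$ closes the argument:
\[
    \|F\|_{p'(\nu)} \ge \|F_0 + \mu(R)^{-1}\|_{p'(\nu)} - \|F - F_0 - \mu(R)^{-1}\|_{p'(\nu)} \ge (1 + \eps/4) \mu(R)^{-1}.
\]
The main subtlety is coordinating the two hypotheses: the factor $2^d$ introduced by Cauchy-Schwarz to bound $\sum_g \alpha_g/\alpha$ is precisely what the width restriction $\sigma \le c\eps/r 2^d$ is designed to absorb.
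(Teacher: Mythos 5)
Your proposal is correct and follows essentially the same route as the paper's proof: rescale the centered, spectrally non-negative sum by $\mu(R)$, apply \Cref{lem:oddmoments} under the hypothesis $\widehat{\nu}\ge 0$, and then absorb the cross terms $\frac{1}{\alpha}\sum_g \alpha_g(1_{A_g}\circ\mu_R)+\alpha_g(\mu_R\circ 1_{A_g})-\alpha_g^2(\mu_R\circ\mu_R)$ into an error of size $O(\sigma r 2^d)\mu(R)^{-1}$ on $\mathrm{supp}(\nu)\subseteq R_\sigma$, using the Cauchy--Schwarz bound $\frac{1}{\alpha}\sum_g\alpha_g\le 2^d\mu(R)^{-1}$. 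The only cosmetic difference is that you verify the near-constancy of the convolutions pointwise from the regularity definition, whereas the paper invokes \Cref{cor:approxbohr}; the underlying estimate is identical.
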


\begin{proof}
    The proof follows \cite[Proposition 18]{bloom2023kelley}. We write 
    $$
        f = \frac{1}{\alpha} \sum_{g \in C} \alpha_g (1_{A_g} \circ  \mu_{R})  + \alpha_g (\mu_R \circ 1_{A_g}) - \alpha_g^2 (\mu_R \circ \mu_R).
    $$
    For any $p' \geq 1$,
    \begin{align*}
        \left \| \frac{1}{\alpha} \sum_{g \in C} 1_{A_g} \circ 1_{A_g} \right \|_{p'(\nu)} &= \left \| \frac{1}{\alpha} \sum_{g \in C} \left( 1_{A_g} - \alpha_g \mu_R \right) \circ \left( 1_{A_g} - \alpha_g \mu_R \right) + \mu(R)^{-1} + f - \mu(R)^{-1} \right \|_{p'(\nu)} \\
        &\geq \left \| \frac{1}{\alpha} \sum_{g \in C} \left( 1_{A_g} - \alpha_g \mu_R \right) \circ \left( 1_{A_g} - \alpha_g \mu_R \right) + \mu(R)^{-1} \right \|_{p'(\nu)} - \left \| f - \mu(R)^{-1} \right \|_{p'(\nu)}. 
    \end{align*}
    First, we use spectral positivity and \Cref{lem:oddmoments} to obtain that for $p'=O_{\eps}(p)$ it holds
    $$
        \left \| \frac{1}{\alpha} \sum_{g \in C} \left( 1_{A_g} - \alpha_g \mu_R \right) \circ \left( 1_{A_g} - \alpha_g \mu_R \right) + \mu(R)^{-1} \right \|_{p'(\nu)} \geq \left(1 + \frac{\eps}{2} \right) \mu(R)^{-1}.
    $$
    Next, we use \Cref{cor:approxbohr} to bound the error term. For $x \in \text{supp}(\nu)$, we have 
    \begin{align*}
        \frac{1}{\alpha}\sum_{g \in C} \alpha_g | (1_{A_g} \circ \mu_R)(x) - (1_{A_g} \circ \mu_R) (0)| &= \frac{1}{\alpha}\sum_{g \in C} \alpha_g \left| \langle 1_{A_g}, \mu_{R-x} - \mu_R \rangle \right| \\
        &= \frac{1}{\alpha}\sum_{g \in C} \alpha_g \left| \langle 1_{A_g}, \mu_{R} \ast 1_{-x} - \mu_R \rangle \right| \\
        &\leq \frac{1}{\alpha}\sum_{g \in C} \alpha_g \cdot \sigma \text{ }\rk(R) \\
        &= \frac{\eps}{4}  \mu(R)^{-1},
    \end{align*}
    where the implicit constant in $\sigma$ is set sufficiently small. Similar reasoning gives the same bounds for the other terms in $f$. We have
    $$
        \| f - \mu(R)^{-1} \|_{p'(\nu)} \leq \| f - \mu(R)^{-1} \|_{L^{\infty}\left( \text{supp}(\nu) \right)} \leq \frac{1}{4} \mu(R)^{-1}.
    $$
    This concludes the proof.
\end{proof}

Putting everything together, we can now prove the main result of this section (restated below). The choice of measure $\mu_{C'} \ast \mu_{C''-x}$ in the conclusion may seem a bit mysterious, but it will be necessary when we apply (in \Cref{cor:robust_witness_int}) that the measure we are working with is a convolution of two sets. 

\unbalancingInt*

\begin{proof}
    We first apply \Cref{cor:usedecoupling} to infer that 
    $$
        \left \|\frac{1}{\alpha} \sum_{g \in C} \left(1_{A_g} - \alpha_g \mu_R \right) \circ \left(1_{A_g} - \alpha_g \mu_R \right) \right \|_{p(\nu)} \geq \frac{\eps}{2} \mu(R)^{-1}.
    $$
    Then \Cref{thm:usingoddmoments} implies that
    $$
        \left \| \frac{1}{\alpha} \sum_{g \in C} 1_{A_g} \circ 1_{A_g} \right \|_{p'(\nu)}\geq (1 + \eps/8) \mu(R)^{-1}
    $$
    for some integer $p' = O_{\eps}(p)$ and $\nu = \mu_{C'} \circ \mu_{C'} \ast \mu_{C''} \circ \mu_{C''}$. By averaging, for some choice of $x \in C' + C''$, we must have
    \[
        \left \| \frac{1}{\alpha} \sum_{g \in C} 1_{A_g} \circ 1_{A_g} \right \|_{p'(\mu_{C'} \ast \mu_{C'' - x})}\geq (1 + \eps/8) \mu(R)^{-1}. \tag*{\qedhere}
    \]
\end{proof}

\subsubsection{Dependent random choice}\label{sec:sifting_int}

Here, we perform the analogous step to what Kelley and Meka call ``sifting,'' relying on \Cref{lem:robust_witness_gen}.

\begin{corollary}\label{cor:robust_witness_int}
    Let $\eps \in (0,1)$ and $p \geq 1$ be an integer. Furthermore, let $\{A_g\}_{g \in C}$ be a collection of subsets of a set $R\subseteq G$ indexed by a set $C\subseteq G$. Assume $\sum_{g \in C} |A_g| = 2^{-d} |R| |C|$.
    Moreover, let $B, B' \subseteq G$ and $f = \sum_{g \in C} 1_{A_g} \circ 1_{A_g}$. If 
    $$
        \left \| \mu_f \right \|_{p(\mu_{B} \ast \mu_{B'})} \geq (1 + \eps) \mu(R)^{-1}
    $$
    and 
    $$
        S = \left\{x \in G : \mu_{f}(x) > (1-\eps/2) \| \mu_{f} \|_{p(\mu_{B} \ast \mu_{B'})} \right\},
    $$
    then there exists $M_1 \subseteq B$, $M_2 \subseteq B'$ such that 
    \[
        \langle \mu_{M_1} \circ \mu_{M_2}, 1_{S} \rangle \geq 1 - \frac{\eps}{4} \quad\text{and}\quad \min\left(\mu_{B}(M_1), \mu_{B'}(M_2) \right) \geq 2^{-O_{\eps}(dp)}.
    \]
\end{corollary}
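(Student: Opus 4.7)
The statement is the integer analogue of \Cref{cor:robust_witness_ff}, and the plan is to reduce directly to the general sifting lemma, \Cref{lem:robust_witness_gen}. Only two pieces of bookkeeping are needed: (i) matching the measure $\mu_B \ast \mu_{B'}$ in the hypothesis with the difference-convolution form $\mu_{B_1} \circ \mu_{B_2}$ used in \Cref{lem:robust_witness_gen}, and (ii) adjusting the density normalization, which in \Cref{cor:robust_witness_int} is measured against $|R||C|$ rather than $|G||C|$.

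For (i), the change of variables $y \mapsto -y$ in the definition of $\circ$ yields the identity $\mu_{-B} \circ \mu_{B'} = \mu_B \ast \mu_{B'}$, so we may invoke \Cref{lem:robust_witness_gen} with $B_1 \coloneqq -B$ and $B_2 \coloneqq B'$. The sets $M_1' \subseteq -B$ and $M_2 \subseteq B'$ it returns are then converted back via $M_1 \coloneqq -M_1' \subseteq B$, under which the density and inner-product bounds are preserved. (In the intended application, $B, B'$ come from Bohr sets and are symmetric, so $\mu_B \ast \mu_{B'} = \mu_B \circ \mu_{B'}$ and no reflection is needed.) For (ii), we rewrite the hypothesis as $\sum_{g \in C}|A_g| = 2^{-d'}|G||C|$ with $d' \coloneqq d + \log(1/\mu(R))$.

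With these adjustments in hand, I would apply \Cref{lem:robust_witness_gen} with parameters $\eps \coloneqq \eps/2$ and $\delta \coloneqq \eps/4$, so that the threshold defining $S$ matches the one in the corollary and we obtain $\langle \mu_{M_1} \circ \mu_{M_2}, 1_S\rangle \ge 1 - \eps/4$ immediately. For the density estimate, substituting the hypothesis $\|\mu_f\|_{p(\mu_B \ast \mu_{B'})} \ge (1+\eps)\mu(R)^{-1}$ into the conclusion of \Cref{lem:robust_witness_gen} yields
\[
  2^{-d'}\|\mu_f\|_{p(\mu_B \ast \mu_{B'})} \ge 2^{-d}\mu(R) \cdot (1+\eps)\mu(R)^{-1} = (1+\eps)2^{-d},
\]
so the $\mu(R)$ factor coming from $d'$ cancels the $\mu(R)^{-1}$ from the norm; raising to the $O_\eps(p)$ power gives the claimed $2^{-O_\eps(dp)}$ density lower bound. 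There is no real obstacle here: the only novelty compared to the finite field version is precisely this cancellation of the $\mu(R)$ normalization, which is what justifies phrasing the density hypothesis against $|R||C|$ in the first place.
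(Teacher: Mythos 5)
Your proposal is correct and is essentially the paper's proof: the paper establishes the corollary by exactly the same one-line application of \Cref{lem:robust_witness_gen}, taking $B_1 \coloneqq B$, $B_2 \coloneqq -B'$, $d \coloneqq d - \log \mu(R)$, $\eps \coloneqq \eps/2$, and $\delta \coloneqq \eps/4$, so your choice of which factor carries the reflection and your cancellation of the $\mu(R)$ normalization against the $|R||C|$ density convention are the same bookkeeping. (Your aside that reflecting a single $M_i$ preserves the $\circ$-correlation is only literally true when the ambient set is symmetric, but this is exactly as loose as the paper's own treatment, which simply leaves $M_2 \subseteq -B'$, and it is harmless in the intended application where $B$ is a symmetric Bohr set.)
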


\begin{proof}
    Apply \Cref{lem:robust_witness_gen} to $R$ and $C$ with $B_1 \coloneqq B$, $B_2 \coloneqq -B'$, $d \coloneqq d-\log(\mu(R))$, $\eps \coloneqq \eps/2$, and $\delta \coloneqq \eps/4$.
\end{proof}

\subsubsection{Almost periodicity}\label{sec:almost_period_int}

\Cref{cor:robust_witness_int} allowed us to turn a non-uniformity assumption on $F'$ into dense sets $M_1, M_2$ which are robust witnesses to upwards deviations in $F'$. The following result allows us to obtain a density increment on some translate of a sufficiently large Bohr set. 

\begin{theorem}[{\cite[Theorem 5.4]{schoen2016roth}, see also \cite[Theorem 17]{bloom2023kelley}}]\label{thm:almostperiod}
    Let $\eps > 0$ and $B, B' \subseteq G$ be regular Bohr sets of rank $r$. Suppose that $M_1 \subseteq B$ with $\mu_B(M_1) = 2^{-d_1}$ and $M_2 \subseteq B'$ with $\mu_{B'}(M_2) = 2^{-d_2}$. Let $S$ be any set with $|S| \leq 2 |B|$. Then there is a regular Bohr set $B'' \leq B'$ of rank at most 
    $$
        \rk(B'') \le r + O_{\eps}(d_1^3 d_2)
    $$
    and density
    $$
        \mu(B'') \geq \exp\left(-O_{\eps}((d_1^3 d_2 + r) (d_1 + d_2 + \log r ) ) \right)\mu(B')
    $$
    such that
    $$
        |\langle \mu_{B''} \ast \mu_{M_1} \circ \mu_{M_2}, 1_S \rangle - \langle \mu_{M_1} \circ \mu_{M_2}, 1_S \rangle | \leq \eps.
    $$
\end{theorem}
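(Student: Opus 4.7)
The plan is to prove \Cref{thm:almostperiod} via the $L^p$-almost-periodicity method of Croot and Sisask, in the Bohr set form developed in \cite{schoen2016roth}. The core of the argument is to exhibit a large set of ``almost periods'' of the function $\mu_{M_1} \circ \mu_{M_2}$ in the $L^p(\mu_B)$ sense, and then extract a regular sub-Bohr set $B'' \leq B'$ sitting inside these almost periods.

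First, I would use random sampling to approximate $\mu_{M_1} \circ \mu_{M_2}$. Writing $(\mu_{M_1} \circ \mu_{M_2})(x) = \E_{y \sim \mu_{M_2}}\, \mu_{M_1}(y+x)$, I can draw $k$ i.i.d.\ samples $y_1, \dots, y_k$ from $\mu_{M_2}$ and form the empirical average $\widetilde{F}(x) = \frac{1}{k}\sum_i \mu_{M_1}(y_i + x)$. A Marcinkiewicz--Zygmund / Rosenthal-type inequality (evaluated in the ambient norm $L^p(\mu_B)$, which is the natural measure when testing against $1_S \subseteq B$) shows that for $k = O_\eps(p)$, the empirical error $\|\widetilde{F} - \mu_{M_1} \circ \mu_{M_2}\|_{p(\mu_B)}$ is small with high probability over the samples.

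Second, I would apply pigeonhole over the sample space to produce a large subset $T \subseteq M_2$ of relative density $\exp(-O_\eps(d_1^3 d_2))$ in $B'$ such that, for all $t \in T$, the translate $\tau_t \widetilde{F}$ is close to $\widetilde{F}$ in $L^p(\mu_B)$. Combining with the first step, every $t \in T - T$ is an $L^p(\mu_B)$-almost-period of $\mu_{M_1} \circ \mu_{M_2}$. The exponent $p$ will be chosen of order $d_1$ to absorb the $\|\mu_{M_1}\|_\infty = 2^{d_1}\mu(B)^{-1}$ factor, which is what drives the $d_1^3 d_2$ dependence in the final rank. Then a standard Bohr set extraction lemma (Chang-type / Bogolyubov-in-Bohr-sets, cf.\ \cite{tao2006additive,bloom21}) produces a regular sub-Bohr set $B'' \leq B'$ contained in $T - T$ with rank at most $\rk(B') + O_\eps(d_1^3 d_2)$ and density at least the claimed $\exp\bigl(-O_\eps((d_1^3 d_2 + r)(d_1 + d_2 + \log r))\bigr)\mu(B')$, where the $(d_1 + d_2 + \log r)$ factor arises from the usual \Cref{lem:bohrsize}-style bound applied to a new frequency set of size $O_\eps(d_1^3 d_2) + r$.

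Finally, to obtain the claimed estimate on $\langle \mu_{B''} \ast \mu_{M_1} \circ \mu_{M_2}, 1_S \rangle$, I would apply the adjoint property and Hölder's inequality: since $B'' \subseteq T - T$, the translates $\tau_t(\mu_{M_1} \circ \mu_{M_2})$ for $t \in B''$ all lie within $\eps$ of $\mu_{M_1} \circ \mu_{M_2}$ in $L^p(\mu_B)$, and testing against $1_S$ costs at most a factor $\|1_S\|_{p'(\mu_B)} \leq (2\mu(B)^{-1}\mu_B(S))^{1/p'} = O(1)$ using $|S| \leq 2|B|$ and $p$ large. The main obstacle will be bookkeeping the parameters precisely: in particular, obtaining the sharpened $d_1^3 d_2$ rank bound (rather than a crude $d_1^2 d_2^2$) requires the careful iterative sampling argument of Schoen--Sisask together with a judicious choice of $p$ calibrated to $d_1$. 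Since this is a known theorem from \cite{schoen2016roth}, the cleanest presentation is to quote it as a black box, which is what the paper does.
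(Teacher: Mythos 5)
The paper does not prove \Cref{thm:almostperiod} at all: it is imported verbatim as a black box from \cite[Theorem 5.4]{schoen2016roth} (see also \cite[Theorem 17]{bloom2023kelley}), which is exactly the conclusion you reach in your final sentence, so your proposal and the paper agree in the only respect that matters here. Your sketch of the underlying Croot--Sisask/Schoen--Sisask $L^p$-almost-periodicity argument is a fair summary of how the cited theorem is proved, with one imprecision worth flagging: the regular sub-Bohr set $B''$ is not obtained as a Bohr set literally contained in $T-T$ (the difference set of a dense subset of a Bohr set need not contain a Bohr set of comparable rank and width); rather, one passes from the large set of $L^p$-almost periods to a \emph{Bohr set} of almost periods via an additional smoothing/Chang-spectrum step, and it is this step, together with \Cref{lem:bohrsize}-type estimates, that produces the $r+O_{\eps}(d_1^3d_2)$ rank and the $(d_1+d_2+\log r)$ factor in the density bound. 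Since the theorem is cited rather than reproved, this imprecision does not bear on the paper's usage of it.
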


Note that the Bohr set $B''$ is a sub-Bohr set of $B'$ in the conclusion of the above theorem. We will apply \Cref{thm:almostperiod} to obtain the corollary below. The choice of dilates for $C', C''$ in the hypothesis are simply to ensure that $|C' + C''| \leq 2|C'|$ via regularity. 

\begin{corollary}\label{cor:density_increment}
Let $\eps \in (0,1)$ and $p \geq 1$ an integer. Furthermore, let $\{A_g\}_{g \in C}$ be a collection of subsets of a Bohr set $R$ of rank $r$ indexed by a regular Bohr set $C$, and assume $\sum_{g \in C} |A_g| = 2^{-d} |R| |C|$. Let $C'$ be a rank $r$ regular Bohr set, and let $C'' = C'_{\sigma}$ where $\sigma = O(r^{-1})$ is chosen to ensure $C''$ is regular. Moreover, let $M_1 \subseteq C'$ and $M_2 \subseteq C''-x$ for some shift $x \in G$ with density guarantees $\mu_{C'}(M_1), \mu_{C''-x}(M_2) \geq 2^{-O_{\eps}(dp)}$. Let $S \subseteq G$ be any set. If
    $$
        \langle \mu_{M_1} \circ \mu_{M_2}, 1_{S} \rangle \geq 1 - \eps/2,
    $$
    then there exists a regular Bohr set $B \leq C''$ such that
    $$
       \langle \mu_{B} \ast \mu_{M_1} \circ \mu_{M_2}, 1_S \rangle \geq 1 - \eps
    $$
    where
    \[
        \rk(B) \leq r + O_{\eps}\left(d^4p^4\right) \quad\text{and}\quad \mu(B) \geq \exp\left( -O_{\eps}(r p\left(d + \log r \right) + d^5 p^5)\right) \mu(C'').
    \]
\end{corollary}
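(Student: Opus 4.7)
The plan is to reduce directly to Theorem~4.11 (Schoen--Sisask almost periodicity). The only two obstacles to a direct application are that $M_2$ sits inside the translate $C''-x$ rather than the Bohr set $C''$ itself, and that Theorem~4.11 requires the size bound $|S| \le 2|B|$, which is not given in the hypotheses. Both are handled by short preparatory observations, after which the statement becomes an immediate instantiation.

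First, I would restrict $S$ to the support of $\mu_{M_1} \circ \mu_{M_2}$. Since $M_1 \subseteq C'$ and $M_2 \subseteq C''-x$, and Bohr sets are symmetric, this support lies in $C' + C'' - x$. Replacing $S$ by $S \cap (C' + C'' - x)$ changes neither inner product in the statement. The triangle inequality applied to the defining inequalities of Bohr sets, combined with the choice $C'' = C'_\sigma$, gives $C' + C'' \subseteq C'_{1+\sigma}$. Regularity of $C'$ together with $\sigma = O(r^{-1})$ (for a sufficiently small implicit constant) then yields $|C'_{1+\sigma}| \le 2|C'|$, so $|S| \le 2|C'|$ after this restriction.

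Next, I would absorb the shift by setting $\tilde{M}_2 = M_2 + x \subseteq C''$ and $\tilde{S} = S - x$. A direct change of variables in the definition of difference convolution gives
\[
\langle \mu_{M_1} \circ \mu_{M_2}, 1_S \rangle = \langle \mu_{M_1} \circ \mu_{\tilde{M}_2}, 1_{\tilde{S}}\rangle,
\]
and the identical identity holds with any $\mu_B\ast$ prepended. Applying Theorem~4.11 with $B := C'$, $B' := C''$, $M_1$, $\tilde{M}_2$, and $\tilde{S}$, and using that the density hypotheses $\mu_{C'}(M_1), \mu_{C''}(\tilde{M}_2) \ge 2^{-O_\eps(dp)}$ give $d_1, d_2 = O_\eps(dp)$ in the theorem's notation, produces a regular sub-Bohr set $B \le C''$ with the stated almost-periodicity conclusion to error $\eps/2$. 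Undoing the translation and combining with the hypothesis $\langle \mu_{M_1} \circ \mu_{M_2}, 1_S\rangle \ge 1-\eps/2$ yields $\langle \mu_B \ast \mu_{M_1} \circ \mu_{M_2}, 1_S\rangle \ge 1 - \eps$.

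For the parameter bookkeeping, Theorem~4.11 gives $\rk(B) \le r + O_\eps(d_1^3 d_2) = r + O_\eps(d^4 p^4)$ and $\mu(B) \ge \exp(-O_\eps((d_1^3 d_2 + r)(d_1 + d_2 + \log r)))\,\mu(C'')$; expanding the product and absorbing subdominant cross terms into the leading contributions yields the claimed bound $\mu(B) \ge \exp(-O_\eps(rp(d+\log r) + d^5 p^5))\,\mu(C'')$. There is no substantive obstacle here: the entire corollary is a clean application of Theorem~4.11 once the translation $x$ has been propagated through and the size hypothesis on $S$ verified via regularity of $C'$.
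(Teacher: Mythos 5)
Your proposal is correct and follows essentially the same route as the paper: restrict $S$ to the support $C'+C''-x$, translate $M_2$ (and $S$) to remove the shift, verify $|C'+C''|\le 2|C'|$ via $C'+C''\subseteq C'_{1+\sigma}$ and regularity, and then invoke Theorem~\ref{thm:almostperiod} with $B\coloneqq C'$, $B'\coloneqq C''$ at error $\eps/2$. The only nitpick is the direction of the translate of $S$ (with the paper's convention $(f\circ g)(z)=\E_y f(y)g(z+y)$ one gets $\tilde S=S+x$ rather than $S-x$), which is immaterial to the argument.
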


\begin{proof}
    Since $\mu_{M_1} \circ \mu_{M_2}$ is supported on $C' + C'' - x$, we can assume $S$ is supported on the same set. We can also shift $M_2$, $S$ by $x$ so that $M_2 \subseteq C''$ without changing any assumptions or conclusions. Thus, in order to apply \Cref{thm:almostperiod}, it suffices to check that $|C' + C''| \leq 2|C'|$. By regularity and our choice of $\sigma$, we have
    $$
        |C' + C''| = |C' + C'_{\sigma}| \leq (1 + O(\sigma \cdot r))|C'| \leq 2|C'|.
    $$
    The proof concludes by applying \Cref{thm:almostperiod} with $B \coloneqq C'$, $B' \coloneqq C''$ and $M_1,M_2,S$ as given. 
\end{proof}

\subsubsection{Combining steps}
All that remains to prove \Cref{thm:strucvspseudo} (restated below) is to combine the prior steps.
\strVSpsdAbelian*

\begin{proof}
    If we are not in the first condition, then \Cref{lem:nonuniform} implies there is some integer $p = O(d)$ such that 
    $$
        \left \|\frac{1}{\alpha} \sum_{g \in C} \left(1_{A_g} - \alpha_g \mu_R \right) \circ \left(1_{A_g+g} - \alpha_g \mu_{R+g}\right) \right \|_{p(\mu_{C})} \geq \frac{1}{2} \eps \mu(R)^{-1}.
    $$
    We then apply \Cref{lem:unbalancing_int} to infer that 
    $$
        \left \| \frac{1}{\alpha} \sum_{g \in C} 1_{A_g} \circ 1_{A_g} \right \|_{p'(\mu_{C'} \ast \mu_{C'' - x})}\geq \left(1 + \frac{\eps}{16}\right) \mu(R)^{-1},
    $$
    for some shift $x \in G$, where $C' \subseteq C_{\tau_1}$ and $C'' \subseteq C'_{\tau_2}$ are regular Bohr sets for some $\tau_1, \tau_2= O(r^{-1})$ chosen so that $C', C''$ are regular. In particular, the choice of dilates is so that we can later use \Cref{cor:density_increment}. 
    Let $$
        S = \left\{x \in G : \sum_{g \in C} 1_{A_g} \circ 1_{A_g}(x) > (1-\eps/32) \left \| \sum_{g \in C} 1_{A_g} \circ 1_{A_g} \right \|_{p'(\mu_{C'} \ast \mu_{C'' - x})} \right\}.
    $$
    Via \Cref{cor:robust_witness_int}, we obtain $M_1 \subseteq C'$, $M_2 \subseteq C''$ such that 
    $$
        \langle \mu_{M_1} \circ \mu_{M_2}, 1_{S} \rangle \geq 1 - \eps/128,
    $$ 
    where $M_1$ and $M_2$ satisfy the density constraints
    $$
        \min\left(\mu_{B}(M_1), \mu_{B'}(M_2) \right) \geq 2^{-O_{\eps}(dp)}.
    $$
    We then apply \Cref{cor:density_increment} to produce a regular Bohr set $B \leq C''$ 
    such that
    $$
       \langle \mu_{B} \ast \mu_{M_1} \circ \mu_{M_2}, 1_S \rangle \geq 1 - \eps/64,
    $$
    where
    \[
        \rk(B) = r + O_{\eps}(d^8) \quad\text{and}\quad \mu(B) \geq \exp\left(-O_{\eps}(rd^2 + rd \log r + d^{10}) \right) \mu(C'').
    \]
    Since $C = R_{\sigma}$, $C' = R_{\sigma \tau_1}$, and $C'' \subseteq R_{\sigma \tau_1 \tau_2}$, \Cref{lem:bohrsize} implies
    $$
        \mu(C'') \geq \left(\frac{\sigma \tau_1 \tau_2}{4} \right)^{r}\mu(R) \geq \exp\left(-O( r \log(1/\sigma) + r \log r) \right) \mu(R).
    $$
    Combining the above inequalities gives
    $$
        \mu(B) \geq \text{exp}\left(-O_{\epsilon}(r \log(1/\sigma) + rd^2 + rd \log r + d^{10})\right).
    $$ 
    The last step is to note that
    \begin{align*}
        (1 + \eps/80)\mu(R)^{-1} &\leq (1 - \eps/64)\left(1-\eps/32\right) 
        (1+\eps/16)\mu(R)^{-1} \\
        &\leq \langle \mu_B \ast \mu_{M_1} \circ \mu_{M_2}, \mu_{F'} \rangle \\
        &\leq \| \mu_{F'} \ast \mu_B\|_{\infty} \|\mu_{M_1} \circ \mu_{M_2}\|_1 \\
        &= \|\mu_{F'} \ast \mu_B\|_{\infty}. \tag*{\qedhere}
    \end{align*}
\end{proof}


\section*{Acknowledgments} 
We thank an anonymous reviewer for a number of helpful comments. MJ and AO thank Lior Lovett for his constant encouragement.

\bibliographystyle{amsplain}


\begin{dajauthors}
\begin{authorinfo}[michael]
  Michael Jaber\\
  Department of Computer Science, UT Austin\\
  USA\\
  mjjaber\imageat{}cs\imagedot{}utexas\imagedot{}edu \\
  \url{https://michaeljaber.github.io/}
\end{authorinfo}
\begin{authorinfo}[shachar]
  Shachar Lovett\\
  Department of Computer Science and Engineering, UC San Diego\\
  USA\\ 
  slovett\imageat{}ucsd\imagedot{}edu \\
  \url{https://cseweb.ucsd.edu/~slovett/}
\end{authorinfo}
\begin{authorinfo}[anthony]
  Anthony Ostuni\\
  Department of Computer Science and Engineering, UC San Diego\\
  USA\\
  aostuni\imageat{}ucsd\imagedot{}edu \\
  \url{https://aostuni.github.io/}
\end{authorinfo}
\end{dajauthors}

\end{document}